\documentclass[11pt,reqno,oneside]{amsart}
\usepackage{comment}
\usepackage{enumerate}
\usepackage{geometry}
\usepackage{amsmath}
\usepackage{amsthm}
\usepackage{thmtools}
\usepackage{etoolbox}
\usepackage{hyperref}
\usepackage[capitalize]{cleveref}
\usepackage{amssymb}
\usepackage{mathrsfs}
\usepackage{lipsum}
\usepackage{graphicx}
\usepackage{subcaption}
\graphicspath{{./figure/}}
\DeclareGraphicsExtensions{.pdf,.jpeg,.png,.jpg}
\usepackage{xcolor}
\usepackage{tikz}
\usepackage{tkz-euclide}
\usepackage{changepage}

\usepackage{titletoc}
\usepackage{float}
\newtheoremstyle{noparen}
{3pt}{3pt}        
{\itshape}        
{}                
{\bfseries}       
{}               
{ }               
{\thmname{#1}\thmnumber{ #2} \thmnote{\normalfont #3}}

\theoremstyle{noparen}

\numberwithin{equation}{section}

\newtheorem{theorem}{Theorem}[section]
\newtheorem{lemma}[theorem]{Lemma}
\newtheorem{proposition}[theorem]{Proposition}
\newtheorem{corollary}[theorem]{Corollary}

\newtheoremstyle{nopunct}
{3pt}{3pt}        
{}                
{}                
{\bfseries}       
{}                
{ }               
{\thmname{#1}\thmnumber{ #2} \thmnote{ #3}}
\theoremstyle{nopunct}

\newtheorem{remark}[theorem]{Remark}

\newtheorem{definition}[theorem]{Definition}

\newtheorem{example}[theorem]{Example}

\begin{document}
	\title{Leibenson's equation on graphs}
\author{Philipp S\"{u}rig}
   \address{Philipp S\"{u}rig, Universit\"{a}t Bielefeld, Fakult\"{a}t f\"{u}r Mathematik, Postfach 100131, D-33501, Bielefeld, Germany}
    \email{philipp.suerig@uni-bielefeld.de}
\author{Xinrong Zhao}
    \address{Xinrong Zhao,
School of Mathematical Sciences, Fudan University, Shanghai, 200433, P.R. China}
    \email{xrzhao24@m.fudan.edu.cn}    
    \date{August 2026}

    \subjclass[2020]{35R02, 35K55, 39A12}
\keywords{Leibenson equation, doubly nonlinear parabolic equation, graphs}
\thanks{The first author was funded by the Deutsche Forschungsgemeinschaft (DFG,
German Research Foundation) - Project-ID 317210226 - SFB 1283.}

	\begin{abstract}
		In this paper we study on infinite graphs the Leibenson equation
        $$
        \partial_t u = \Delta_p u^q,
        $$ 
        where $p>1$, $q>0$ and $\Delta_p$ denotes the discrete $p$-Laplacian. We prove, for any integrable initial data $u_0$, the existence of a global solution, which is unique for a certain range of $p$ and $q$. Assuming a \textit{Faber--Krahn inequality}, we obtain sharp $\ell^1$-$\ell^\infty$ smoothing estimates and quantitative bounds on the propagation of solutions with initially finite support. Under certain assumptions on $p$ and $q$, we also prove finite-time extinction results for solutions when the graph satisfies an \textit{isoperimetric inequality}. In particular, on Cayley graphs with polynomial volume growth, we establish the optimal large-time decay rate of the $\ell^\infty$-norm for nonnegative finite-mass solutions when $q(p-1)>1$, and demonstrate a sharp dichotomy regarding the finite-time extinction of exhaustion solutions.
	\end{abstract}

    \maketitle
	
	\tableofcontents	

	\section{Introduction}\label{sec:introduction}
    The nonlinear evolution equation
    \begin{equation}\label{eq:LeibensonEquation}
      \partial_t u = \Delta_p u^q,
    \end{equation}
    where $p>1$, $q>0$, and $\Delta_p u:=\operatorname{div}(|\nabla u|^{p-2}\nabla u)$, has attracted considerable interest in recent years. The continuous theory of nonlinear diffusion equations of porous-medium, fast-diffusion, and doubly nonlinear type is by now well developed; see, for example, \cite{DiBenedetto,Juan1,Juan2,Duzaar,Misawa} and the references therein.
    On Riemannian manifolds, this equation has recently been studied in several directions, including existence theory, finite propagation and upper bound estimates; see e.g., \cite{grigor2024finite,grigor2024sharp,grigor2025upper,Suerig1}. 
    
    Recently, there has been growing interest in the study of various differential equations on graphs. When $q=1$, the equation \eqref{eq:LeibensonEquation} reduces to the parabolic $p$-Laplace equation. A general semigroup approach to parabolic $p$-Laplace equation on graphs was developed by Mugnolo \cite{Mugnolo}. Further qualitative properties of solutions on infinite graphs were obtained by Hua and Mugnolo \cite{Hua1}. In particular, they proved that the solution to the parabolic $p$-Laplace equation vanishes in finite time for small $p$ under suitable isoperimetric assumptions and the solutions always satify conservation of mass for large $p$. In \cite{Dan}, Andreucci and Tedeev studied the parabolic $p$-Laplace equation on infinite graphs satisfying Faber--Krahn type inequalities. They proved sharp supremum bounds for the large time behavior of finite mass solutions and obtained the matching lower bounds via an optimal estimate for the effective speed of propagation of mass. For properties of solutions of (\ref{eq:infinite-parabolic}) in the case $q=1$ and $p=2$, that is, the solutions of \textit{discrete heat equation}, we refer to \cite{grigorgraph, Sun}. In this case, the relation between the Faber--Krahn inequality and solutions of \eqref{eq:infinite-parabolic} have been investigated in \cite{barlow2001manifolds, coulhon1998random}.

    The purpose of this paper is to extend the above results from the parabolic $p$-Laplace equation to the general Leibenson equation with arbitrary $p>1$ and $q>0$. 
    Assume that $G=(V,E,\omega)$ is a connected, countable, weighted graph with no multiple edges or self-loops. Here $V$ is the vertex set, $E$ is the edge set, and the weight function $\omega:V\times V\to \mathbb{R}$ is symmetric, nonnegative, and satisfies $\omega(x,y)>0$ if and only if $\{x,y\}\in E$. We write $y\sim x$ when $\{x,y\}\in E$ and abbreviate $\omega(x,y)$ as $\omega_{xy}$.  
    Define
    $$
        \mu(x) = \sum_{y\sim x} \omega_{xy}.
    $$
    We further assume that every vertex has finite degree and that $\mu_0:=\inf_{x\in V} \mu(x) > 0$.
    
    For an infinite graph $G$, we consider the discrete parabolic equation
    \begin{equation}\label{eq:infinite-parabolic}
        \begin{cases}
            \dfrac{\partial u}{\partial t}(x,t) = \Delta_p u^q(x,t), & x\in V,\ t>0,\\[8pt]
            u(x,0) = u_0(x), & x\in V,
        \end{cases}
    \end{equation}
    where
    $$
        \Delta_p u^q(x,t) = \frac{1}{\mu(x)} \sum_{y \in V} \bigl| u^q(y,t) - u^q(x,t) \bigr|^{p-2} \bigl( u^q(y,t) - u^q(x,t) \bigr) \, \omega_{xy}.
    $$
    Throughout we assume $p>1$, $q>0$, and for $k\in\mathbb{R}$, $s>0$ we adopt the convention $k^s := |k|^{s-1}k$. Moreover, to simplify the notation, for a function $f:V\to\mathbb{R}$ we set
    $$
        \nabla_{xy} f = f(y) - f(x),
    $$
    so that
    $$
        \Delta_p u^q(x,t) = \frac{1}{\mu(x)} \sum_{y \in V} (\nabla_{xy} u^q)^{p-1}  \, \omega_{xy}.
    $$
    
    \begin{definition}\label{def:infinite-solution}
        We say that a function $u$ is a \emph{solution} to \eqref{eq:infinite-parabolic} if there exist $T>0$ and $r\ge 1$ such that $u \in L^{\infty}\bigl([0,T); \ell^r(V)\bigr)$, $u(x,\cdot) \in C^{1}([0,T))$ for all $x\in V$, and $u$ satisfies \eqref{eq:infinite-parabolic} pointwise.
    \end{definition}
    
    We now introduce some further notation.
    Let $d(x,y)$ be the graph distance between $x$ and $y$, that is, the minimum number of edges in a path joining $x$ and $y$ in $G$. For $R\ge 0$ set
    $$
        B_R(x_0) = \{ x\in V : d(x,x_0) \le R \}.
    $$
    When the center $x_0$ is clear from the context, we simply write $B_R$.
    
    For a function $f:V\to\mathbb{R}$, $r\ge 1$, and $U\subset V$, we define the weighted $\ell^r$‑norm
    $$
        \|f\|_{\ell^r(U)} = \Bigl( \sum_{x\in U} |f(x)|^r \mu(x) \Bigr)^{\!1/r}, \quad
        \|f\|_{\ell^\infty(U)} = \sup_{x\in U} |f(x)|.
    $$
    The vertex and edge measures of $G$ are defined as
    $$
        \mu_V(U) = \sum_{x\in U} \mu(x) \quad (U\subset V), \quad
        \mu_E(K) = \sum_{\{x,y\}\in K} \omega_{xy} \quad (K\subset E).
    $$

    \begin{definition}\label{def:exhaustion-solution}
    Fix $x_0\in V$. For each $n\ge 1$, consider the following mixed problem:
    \begin{equation}\label{eq:finite-ball-problem}
        \begin{cases}
        \displaystyle
        \frac{\partial u_n}{\partial t}(x,t)
        =
        \Delta_p u_n^q(x,t),
        &
        x\in B_n(x_0),\ t>0,
        \\[1ex]
        u_n(x,0)=u_0(x),
        &
        x\in B_n(x_0),
        \\[1ex]
        u_n(x,t)=0,
        &
        x\notin B_n(x_0),\ t\ge 0.
    \end{cases}
    \end{equation}
    A solution $u\in L^{\infty}([0,+\infty);\ell^r(V))$ to \eqref{eq:infinite-parabolic} is called an \emph{exhaustion solution} if there exists a subsequence $n_k\to\infty$ such that $u(x,t)=\lim_{k\to\infty} u_{n_k}(x,t)$ for all $(x,t)\in V\times[0,+\infty)$, where each $u_{n_k}$ is a solution to the mixed problem \eqref{eq:finite-ball-problem} on $B_{n_k}(x_0)$.
\end{definition}
    
    The first main results of the present paper are the following.
    
    \begin{theorem}\label{thm:existence-lr}
        If $u_0\in\ell^r(V)$ with $r>1$, then the initial value problem \eqref{eq:infinite-parabolic} 
        admits an exhaustion solution $u\in L^{\infty}([0,+\infty);\ell^r(V))$.
    \end{theorem}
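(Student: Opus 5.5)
We construct the solution by exhaustion, as in Definition~\ref{def:exhaustion-solution}. We solve the finite-ball problems \eqref{eq:finite-ball-problem}, derive bounds on $u_n$ that do not depend on $n$, and then pass to the limit along a subsequence.

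\emph{Step 1: the finite problem.} For fixed $n$, problem \eqref{eq:finite-ball-problem} is a system of ODEs in the finitely many unknowns $u_n(x,t)$, $x\in B_n$. The right-hand side is $F(v)(x)=\frac1{\mu(x)}\sum_y(\nabla_{xy}v^q)^{p-1}\omega_{xy}$ with $v=0$ off $B_n$, and $F$ is continuous in $v$. Peano's theorem therefore gives a local $C^1$ solution. We do not expect uniqueness to be available for all $p,q$, and the definition only asks for some solution, so existence is enough.

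To continue the solution globally we need an a priori bound. Multiply the equation by $|u_n|^{r-2}u_n\,\mu(x)$ and sum over $x$. After summing by parts (using $u_n=0$ off $B_n$) we get
$$\frac1r\frac{d}{dt}\|u_n\|_{\ell^r}^r=-\frac12\sum_{x,y}(\nabla_{xy}u_n^q)^{p-1}\,\nabla_{xy}(u_n^{r-1})\,\omega_{xy}\le 0 .$$
The inequality holds because $s\mapsto s^q$ and $s\mapsto s^{r-1}$ are both increasing, so $\nabla_{xy}u_n^q$ and $\nabla_{xy}(u_n^{r-1})$ have the same sign. This is where $r>1$ is used. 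Since every vertex has finite degree, all sums are finite. Hence $\|u_n(t)\|_{\ell^r}\le\|u_0\|_{\ell^r(B_n)}\le\|u_0\|_{\ell^r}$, and the solution extends to all $t\ge0$. Because $\mu\ge\mu_0$, this also gives the pointwise bound $|u_n(x,t)|\le \mu_0^{-1/r}\|u_0\|_{\ell^r}=:M$ for all $x,t,n$.

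\emph{Step 2: equicontinuity and compactness.} Fix $x$. From the uniform bound $M$ and finite degree,
$$|\partial_t u_n(x,t)|\le \frac{1}{\mu(x)}\sum_{y\sim x}(2M^q)^{p-1}\omega_{xy}=(2M^q)^{p-1},$$
uniformly in $n$ (for $n$ large enough that $x\in B_n$). So for each $x$ the family $\{u_n(x,\cdot)\}$ is uniformly bounded and uniformly Lipschitz on $[0,\infty)$. Since $V$ is countable, Arzelà--Ascoli combined with a diagonal argument yields a subsequence $n_k$ such that $u_{n_k}(x,\cdot)\to u(x,\cdot)$ locally uniformly on $[0,\infty)$ for every $x$.

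\emph{Step 3: passage to the limit.} Write the equation in integrated form,
$$u_{n_k}(x,t)=u_0(x)+\int_0^t\Delta_p u_{n_k}^q(x,s)\,ds .$$
For fixed $x$, the integrand involves only the finitely many neighbours of $x$ and is a continuous function of their values. It therefore converges locally uniformly in $s$, and the integrated identity holds in the limit. Since the limiting integrand is continuous in $s$, we conclude that $u(x,\cdot)\in C^1$ and that $u$ solves \eqref{eq:infinite-parabolic} pointwise with $u(x,0)=u_0(x)$. Finally, Fatou's lemma applied to the bound from Step 1 gives $\|u(t)\|_{\ell^r}\le\|u_0\|_{\ell^r}$, so $u\in L^\infty([0,\infty);\ell^r(V))$. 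Thus $u$ is an exhaustion solution.

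The main obstacle is Step 1: obtaining the $\ell^r$ contraction, and through it an $\ell^\infty$ bound, uniformly in $n$. Two points need care there. First, the summation by parts requires the boundary convention $u_n=0$ outside $B_n$, and the edges leaving $B_n$ must be handled correctly. Second, the sign argument for possibly sign-changing $u_n$ relies on the odd-power convention $k^s=|k|^{s-1}k$. Once this monotone energy bound is in place, the remaining steps are routine compactness arguments, made easy by local finiteness.
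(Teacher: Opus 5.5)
Your proposal is correct and follows essentially the same route as the paper. Both arguments use Peano's theorem with continuation for the ball problems, an $\ell^r$ energy estimate via the Green formula (Lemma~\ref{lem:Green-formula}), the resulting uniform pointwise and Lipschitz-in-time bounds, a diagonal Arzel\`a--Ascoli extraction, passage to the limit in the integrated equation, and Fatou's lemma. The only cosmetic difference is that you test with the signed power $|u_n|^{r-2}u_n$ in one step, whereas the paper tests separately with $(u_n)_+^{r-1}$ and $(u_n)_-^{r-1}$; your version gives the slightly cleaner bound $\|u_n(t)\|_{\ell^r(V)}\le\|u_0\|_{\ell^r(V)}$.
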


      Moreover, if $q(p-1)\geq 1$ and $u_0\in \ell^1(V)$, then the solution $u$ obtained in Theorem \ref{thm:existence-lr} for some $r>1$ is in the class $L^\infty([0,T);\ell^1(V))$; see Theorem \ref{thm:mass-conservation}.
    
    \begin{theorem}\label{thm:lr-comparison}
        Assume $r\ge 1$ and $q(p-1)\ge r$. 
        Let $u,v\in L^\infty([0,T);\ell^r(V))$ be two solutions of \eqref{eq:infinite-parabolic}. 
        If
        $$
        u(x,0)\le v(x,0) \quad\text{for every }x\in V,
        $$
        then
        $$
        u(x,t)\le v(x,t) \quad\text{for every }x\in V,\; 0\le t<T.
        $$
        In particular, the solution of \eqref{eq:infinite-parabolic} in 
        $L^\infty([0,T);\ell^r(V))$ with initial value $u_0\in\ell^r(V)$ is unique.
    \end{theorem}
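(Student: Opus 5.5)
The plan is an $\ell^1$-contraction (Kato-type) argument for the positive part $(u-v)_+$, using a cutoff to justify summation on the infinite graph. The assumption $q(p-1)\ge r$ is exactly what makes the fluxes summable when $u,v\in\ell^r$.

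\textbf{Choice of test quantity.} Let $w=u^q-v^q$. Since $s\mapsto s^q$ is strictly increasing, $u>v$ exactly where $w>0$. For $\varepsilon>0$ take a smooth nondecreasing $\psi_\varepsilon$ approximating $\mathbf 1_{(0,\infty)}$, with $\psi_\varepsilon=0$ on $(-\infty,0]$. Fix $x_0$ and cutoffs $\eta_R$ with $\eta_R=1$ on $B_R$, $\eta_R=0$ outside $B_{2R}$, and $|\nabla_{xy}\eta_R|\le 1/R$. Compute
$$
\frac{d}{dt}\sum_x \mu(x)\,(u-v)_+\,\eta_R
\approx\sum_x \eta_R\,\psi_\varepsilon(w)\sum_y\big[(\nabla_{xy}u^q)^{p-1}-(\nabla_{xy}v^q)^{p-1}\big]\omega_{xy}.
$$
Since all sums over $B_{2R}$ are finite, the time derivative can be taken termwise.

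\textbf{Discrete integration by parts.} Symmetrizing over $x,y$ gives
$$
-\tfrac12\sum_{x,y}\omega_{xy}\,\big[(\nabla_{xy}u^q)^{p-1}-(\nabla_{xy}v^q)^{p-1}\big]\,\nabla_{xy}\!\big(\eta_R\psi_\varepsilon(w)\big).
$$
Write $\nabla_{xy}(\eta_R\psi)=\eta_R(y)\nabla_{xy}\psi+\psi(x)\nabla_{xy}\eta_R$.
\begin{itemize}
\item \emph{First term.} It is $\le 0$: the map $a\mapsto a^{p-1}$ is monotone, so the bracket has the sign of $\nabla_{xy}u^q-\nabla_{xy}v^q=\nabla_{xy}w$, and $\psi_\varepsilon$ is nondecreasing. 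This term can therefore be dropped.
\item \emph{Second term.} It is bounded by
$$
\frac{1}{R}\sum_{x\in B_{2R}}\sum_{y\sim x}\omega_{xy}\big(|\nabla_{xy}u^q|^{p-1}+|\nabla_{xy}v^q|^{p-1}\big)\,\mathbf 1_{\{d(x,x_0)\ge R-1\}}.
$$
\end{itemize}

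\textbf{Main obstacle: showing the boundary term vanishes.} We have $|\nabla u^q|^{p-1}\lesssim |u(x)|^{q(p-1)}+|u(y)|^{q(p-1)}$, and since $\omega_{xy}\le\mu(x)$, the term is bounded by $C R^{-1}\sum_{d(x,x_0)\ge R-2}\mu(x)\big(|u|^{q(p-1)}+|v|^{q(p-1)}\big)$. The $\ell^r$ bound gives $\sup|u|\le \|u\|_{\ell^r}\mu_0^{-1/r}$, and $q(p-1)\ge r$, so $|u|^{q(p-1)}\le C|u|^r$. Hence the boundary term is at most $C R^{-1}\,\text{tail}$ of $\|u\|_{\ell^r}^r+\|v\|_{\ell^r}^r$. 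This uses finite degrees only locally, and it is bounded uniformly in $t$ by the $L^\infty_t\ell^r$ assumption.

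\textbf{Concluding the argument.} Integrate in time on $[0,t]$, let $\varepsilon\to0$ using dominated convergence in the finite sum, and obtain
$$
\sum_x\mu(x)(u-v)_+(t)\eta_R\le \frac{Ct}{R}\cdot\sup_s\big(\|u(s)\|_{\ell^r}^r+\|v(s)\|_{\ell^r}^r\big),
$$
using $(u-v)_+(0)=0$. Letting $R\to\infty$, Fatou's lemma gives $(u-v)_+(t)=0$. This proves the comparison principle, and uniqueness follows by applying it twice.

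Some care is needed in two places:
\begin{itemize}
\item the derivative of $(u-v)_+$ should be handled via $\frac{d}{dt}\Phi_\varepsilon(u-v)$ with $\Phi_\varepsilon'$ a smoothed sign of $u-v$, rather than of $w$, since the sign sets coincide;
\item the case $q<1$, where $u^q$ is non-Lipschitz near $0$, causes no trouble because we never differentiate $u^q$ in time.
\end{itemize}
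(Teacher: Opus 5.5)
Your proposal is correct and is essentially the paper's proof. Both arguments apply the cutoff $\zeta_R=\zeta_{R,2R}$ to $(u-v)_+$, use Green's formula, get a nonnegative interior term from the monotonicity of $s\mapsto s^{p-1}$, and bound the cutoff term by $O(1/R)$ using $q(p-1)\ge r$ and the uniform pointwise bound on $|u|,|v|$.

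The only difference is that the paper drops the smoothing $\psi_\varepsilon$: it differentiates the finite sum $F_R(t)=\sum_x\zeta_R(x)(u-v)_+(x,t)\mu(x)$ directly and uses the indicator $\mathbf 1_{\{u>v\}}=\mathbf 1_{\{u^q>v^q\}}$. With the indicator the sign argument holds exactly, whereas with a smoothed sign of $u-v$ it holds only after letting $\varepsilon\to0$.
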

    
    The following theorem shows that global mass conservation holds under appropriate constraints on the parameters $p,q$ and the underlying graph.
    
    \begin{theorem}
       \label{thm:mass-conservation-exhaustion-solution}
        Assume that there exists some constant $C>0$ and $d>0$ such that $G$ satisfies
        $$
        \mu_V(B_R)\le C (1+R)^d,\quad\forall R\geq 0.
        $$ 
        Further assume that $q(p-1)\geq\frac{d-p}{d}$. Let $u\in L^{\infty}([0,+\infty);\ell^1(V))$ be an exhaustion solution of \eqref{eq:infinite-parabolic} with initial data $u_0\in \ell^1(V)$. Then for all $t>0$, we have
        $$
        \sum_{x\in V} u(x,t)\,\mu(x) = \sum_{x\in V} u_0(x)\,\mu(x).
        $$
    \end{theorem}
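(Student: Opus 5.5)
We argue by testing the equation against a cutoff function and letting the cutoff radius go to infinity. Fix $t>0$ and $R\ge 1$. Let $\eta_R(x)=\phi(d(x,x_0)/R)$, where $\phi$ is a smooth nonincreasing profile with $\phi=1$ on $[0,1]$, $\phi=0$ on $[2,\infty)$ and $|\phi'|\le 1$. Then $|\nabla_{xy}\eta_R|\le 1/R$, and $\nabla_{xy}\eta_R\neq 0$ only for edges meeting $B_{2R+1}\setminus B_{R-1}$.

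\textbf{Step 1: the weak identity.} Multiply the equation by $\eta_R(x)\mu(x)$ and sum over $x$. Integrate in time; the sums are finite because $\eta_R$ has finite support and vertices have finite degree. Summation by parts on the finite set then gives
$$\sum_x u(x,t)\eta_R(x)\mu(x)-\sum_x u_0(x)\eta_R(x)\mu(x)=-\frac12\int_0^t\sum_{x,y}(\nabla_{xy}u^q)^{p-1}\nabla_{xy}\eta_R\,\omega_{xy}\,ds.$$
Since $u(\cdot,t)\in\ell^1$, the left-hand side tends to the mass difference as $R\to\infty$. It therefore suffices to show that the right-hand side tends to $0$.

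\textbf{Step 2: bounding the flux.} The right-hand side is bounded by
$$R^{-1}\int_0^t\sum_{\text{edges in annulus}}|\nabla u^q|^{p-1}\omega\,ds .$$
Split into two cases according to $q(p-1)$.

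If $q(p-1)\ge 1$, use $|\nabla_{xy}u^q|^{p-1}\le C(|u(x)|^{q(p-1)}+|u(y)|^{q(p-1)})$. Each edge term is then controlled by $\mu$-weighted values of $|u|^{q(p-1)}$ at its endpoints. We have $u\in L^\infty\ell^1$, and a $\ell^\infty$ bound follows from $\ell^1$ together with $\mu_0>0$, since $|u(x)|\le \|u\|_{\ell^1}/\mu_0$. Hence $\sum|u|^{q(p-1)}\mu\le C\|u\|_{\ell^1}$, so the flux is $O(t/R)\to 0$ without using the growth assumption.

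The genuine case is $\frac{d-p}{d}\le q(p-1)<1$. Here I would apply Hölder over the annulus $A_R$ with exponents $\frac{1}{q(p-1)}$ and $\frac{1}{1-q(p-1)}$:
$$\sum_{A_R}|u|^{q(p-1)}\mu\le \Big(\sum_{A_R}|u|\mu\Big)^{q(p-1)}\mu_V(B_{2R+2})^{1-q(p-1)}\le C\,\|u\|_{\ell^1(A_R)}^{q(p-1)}R^{d(1-q(p-1))}.$$
Dividing by $R$ yields a factor $R^{d(1-q(p-1))-1}$, which is bounded only when $q(p-1)\ge \frac{d-1}{d}$. This is weaker than the stated threshold $\frac{d-p}{d}$. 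To reach the threshold, one must bring the gradient energy in via Hölder in the edge variables rather than using the crude pointwise bound:
$$\sum|\nabla u^q|^{p-1}|\nabla\eta|\omega\le\Big(\sum|\nabla u^q|^{p}\omega\,\mathbf 1_{A_R}\Big)^{\frac{p-1}{p}}R^{-1}\mu(A_R)^{1/p}.$$
The energy over $A_R$ must then be controlled by a Caccioppoli-type estimate. Test the equation with $u^{q}\eta_R^p$ to get energy bounded by $R^{-p}\int\sum|u|^{q+\cdot}$-type terms. Alternatively, obtain it for the approximants $u_n$, which are finitely supported and therefore admit an energy identity; this is where the exhaustion property enters. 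Pass to the limit by Fatou.

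\textbf{Step 3: closing and the main obstacle.} Combining these, the flux over $A_R$ should be bounded by a power $R^{\frac{d(1-q(p-1))}{p}\cdot(\dots)-\dots}$. Exponent bookkeeping should give decay exactly when $q(p-1)>\frac{d-p}{d}$. Moreover, because $\|u\|_{\ell^1(A_R)}\to 0$ (uniformly in $s\le t$, by dominated convergence using monotone convergence of the approximants and comparison), the borderline equality case also gives $o(1)$. The main obstacle is this Caccioppoli step for the exhaustion solution. I would do all estimates on $u_n$, where mass computations are legitimate and $u_n$ has sign-controlled boundary flux, obtain bounds uniform in $n$, and then let $n\to\infty$ before $R\to\infty$. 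One also needs smallness of the tail mass $\|u(s)\|_{\ell^1(V\setminus B_R)}$ uniformly in $s\in[0,t]$. I expect to obtain this from the same cutoff argument with $1-\eta_R$, in the spirit of tightness.
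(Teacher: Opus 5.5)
Your architecture (cutoff test, flux bound, energy estimate on the approximants passed to the limit by Fatou) matches the paper's, and your Step 1 and the case $q(p-1)\ge 1$ are fine. In the genuine range $\frac{d-p}{d}\le q(p-1)<1$, however, the decisive estimate is only asserted (``exponent bookkeeping should give\dots''), and the route you indicate does not produce it.

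\textbf{Why the indicated route fails.} Your H\"older split pairs $|\nabla u^q|^{p-1}$ with $|\nabla\eta_R|$. This costs $R^{-1}(t\,\mu_V(A_R))^{1/p}\sim R^{(d-p)/p}$, which grows when $d>p$. The global identity for $u_n$ tested with $u_n^q$ only gives $\int_0^t\sum|\nabla u^q|^p\omega\le C$, so you need a \emph{rate} of decay for the annular energy.
\begin{enumerate}
\item Testing with $u^q\eta^p$ bounds that energy by two terms. The first is $\sum_{A_R'}|u_0|^{q+1}\mu$, which has no rate for general $u_0\in\ell^1(V)$; you would first have to reduce to compactly supported data, e.g.\ via $\ell^1$-contraction of the approximants and Fatou. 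The second is $R^{-p}\int_0^t\sum_{A_R'}|u|^{qp}\mu$.
\item When $qp<1$, H\"older against $\|u\|_{\ell^1}$ does give a flux $O(tR^{d(1-q(p-1))-p})$, i.e.\ the strict threshold.
\item When $qp\ge 1>q(p-1)$, the annular sum $\sum_{A_R'}|u|^{qp}\mu$ is only $O(1)$: it is dominated by $\|u\|_{\ell^\infty}^{qp-1}\|u\|_{\ell^1}$, with no rate. The flux is then only $O(tR^{(d-p^2)/p})$, which fails to decay for $d>p^2$. The theorem covers all $d\le p/(1-q(p-1))$, a strictly larger range when $qp>1$ (e.g.\ $p=2$, $q=0.8$: your bound needs $d<4$, while the theorem allows $d\le 10$).
\end{enumerate}

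\textbf{The equality case is not handled.} The claim $\sup_{s\le t}\|u(s)\|_{\ell^1(V\setminus B_R)}\to 0$ is not available a priori. Obtaining it from the cutoff argument with $1-\eta_R$ is circular, since that tail is bounded by exactly the flux you are trying to show is small. Besides, monotonicity of the approximants needs $u_0\ge 0$, which the theorem does not assume.

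\textbf{The missing idea.} Measure the gradient at the lower power $\beta=\frac{q(p-1)+\varepsilon}{p}$ instead of $q$.
\begin{enumerate}
\item Testing the approximants with $u_n^{\varepsilon}$ and using \cref{lem:power-difference} gives the global bound $\int_0^t\sum_{x,y}|\nabla_{xy}u^\beta|^p\omega_{xy}\,d\tau\le C/\varepsilon$ (\cref{lem:beta-energy}). No localization or Caccioppoli inequality is needed.
\item Write $|\nabla_{xy}u^q|^{p-1}\le C\bigl(|u(x)|^{r/p}+|u(y)|^{r/p}\bigr)|\nabla_{xy}u^\beta|^{p-1}$ with $r=q(p-1)-(p-1)\varepsilon$. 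H\"older in the edges, then against $\|u\|_{\ell^1}^{r}$, reduces the cutoff cost to $\bigl(\sum_x|\nabla\zeta|^{p/(1-r)}\mu(x)\bigr)^{(1-r)/p}$. The volume growth controls this exactly when $d(1-r)\le p$. With a linear cutoff and a fixed small $\varepsilon$, this already settles the strict case $q(p-1)>\frac{d-p}{d}$.
\item For equality, the paper couples the logarithmic cutoff $\zeta_R=\eta(\log\rho/\log R)$ with $\varepsilon=1/\log R$. Its gradient is bounded by $h_R\approx(\rho\log R)^{-1}$ on $\{R/2\le\rho\le 2R^2\}$. A dyadic summation gives $\sum_x h_R^{\sigma_R}\mu\le C(\log R)^{1-\sigma_R}$ with $\sigma_R=p/(1-r_R)$. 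This absorbs the loss $\varepsilon^{-(p-1)/p}=(\log R)^{(p-1)/p}$ and leaves a flux $O\bigl((\log R)^{-r_R/p}\bigr)\to 0$, with no tightness input.
\end{enumerate}
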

    
    We now turn to large-time estimates for solutions on infinite graphs satisfying Faber--Krahn inequalities.   

    \begin{definition}\label{def:Faber--Krahn}
        We say that an infinite graph $G$ satisfies the \emph{Faber--Krahn inequality} for a given $p>1$ and a function 
        $$
        \Lambda_p:(0,+\infty)\to(0,+\infty)
        $$
        if for every finite subset $U\subset V$ and every function $f:V\to\mathbb{R}$ vanishing outside $U$, one has
        \begin{equation}\label{uFK}
        \Lambda_p(\mu_V(U)) \sum_{x\in V}|f(x)|^p \mu(x) \le \sum_{x,y\in V} |f(y)-f(x)|^p\,\omega_{xy}.
        \end{equation}
    \end{definition}
    
    We will always assume that $\Lambda_p(v)$ is continuous, decreasing, and that there exist constants $N$ and $\nu$ such that, for $v>0$,
    \begin{equation}\label{assumponLamb}
    v\mapsto \Lambda_p(v)^{-1} v^{-p/N} \text{ is nondecreasing}, \quad
    v\mapsto \Lambda_p(v)^{-1} v^{-\nu} \text{ is nonincreasing}.
    \end{equation}
    In \cref{sec:appendix} we will show that for a large class of graphs, one may take $\Lambda_p(v)=\gamma_0 v^{-p}$ with some constant $\gamma_0>0$.
    
    It is also convenient to introduce, for $r\geq 1$, the following function:
    $$
    \psi_{r}(s):=s^{\frac {q(p-1)-1}{r}} \Lambda_p\bigl(s^{-1}\bigr), \quad s>0,
    $$
    When $q(p-1)>1$, the assumptions in \eqref{assumponLamb} ensure that $\psi_r$ is continuous, strictly increasing, and satisfies $\lim_{s \to 0} \psi_r(s) = 0$ as well as $\lim_{s \to +\infty} \psi_r(s) = +\infty$. Consequently, $\psi_r$ is a bijection on $(0,+\infty)$, and we denote its inverse by $\varphi_r$.
    
        For $x\in V$, let us apply (\ref{uFK}) with $U=\{x\}$ and $f=\mathbf{1}_U$. Then
        $\Lambda_p(\mu(x))\mu(x) \le 2\mu(x)$. 
        Since $\Lambda_p$ is decreasing, we obtain $\mu(x)\ge \Lambda_p^{-1}(2)$, and consequently we always have
        $\mu_0=\inf_{x\in V}\mu(x)> 0$ on graphs satisfying the Faber--Krahn inequality.

    \begin{theorem}\label{thm:upper-boundint}
        Assume that $G$ satisfies the Faber--Krahn inequality (\ref{uFK}) and suppose that $q(p-1)> 1$. 
        Assume that $u_0\in \ell^r(V)$, $r\geq 1$, is nonnegative and let $u\in L^{\infty}([0,T);\ell^r(V))$ be a solution of \eqref{eq:infinite-parabolic}. 
        Then, for all $0<t<T$,
        \begin{equation}
           	||u(t)||_{\ell^{\infty}(V)}
	\le
	C
	\|u_0\|_{\ell^r(V)}
	\varphi_r
	\left(
	t^{-1}
	\|u_0\|_{\ell^r(V)}^{-\frac{q(p-1)-1}{r}}
	\right)^{1/r},
        \end{equation}
        where the constant $C>0$ depends only on $p$, $q$, $r$ and $\Lambda_p$.
    \end{theorem}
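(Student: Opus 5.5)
We will prove the estimate by a De Giorgi-type iteration on level sets, in the style of Andreucci--Tedeev and of Grigor'yan--Sürig on manifolds. The graph Faber--Krahn inequality \eqref{uFK} replaces the Sobolev inequality. First, by the comparison principle (Theorem \ref{thm:lr-comparison}) or directly from the equation, $u\ge 0$ and $\|u(t)\|_{\ell^r}\le \|u_0\|_{\ell^r}$ for all $t$. The monotonicity in time follows by testing with $u^{r-1}$ and using the monotonicity of $s\mapsto s^{p-1}$. The same test gives the basic energy identity for truncations: for a level $k\ge0$, set $w=(u-k)_+$ and test the equation with $w^{\alpha}$ for suitable $\alpha>0$ (I would take $\alpha$ so that the time derivative yields $\frac{d}{dt}\sum w^{1+\alpha}\mu$). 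Using symmetry of $\omega$ and summation by parts, this gives
\begin{equation*}
\frac{d}{dt}\sum_x w^{1+\alpha}\mu+c\sum_{x,y}\bigl|\nabla_{xy}(u^q-k^q)_+^{\beta}\bigr|^{p}\omega_{xy}\le 0 .
\end{equation*}
Here $\beta$ is chosen so that $(\nabla u^q)^{p-1}\nabla w^\alpha$ dominates a $p$-th power of a gradient of a power. The key elementary inequality needed is the discrete "chain rule" $(a^{q}-b^{q})^{p-1}(a^\alpha-b^\alpha)\ge c\,|a^{\gamma}-b^{\gamma}|^{p}$ for $a,b\ge0$, with $\gamma=(q(p-1)+\alpha)/p$. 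This is standard and I would prove it by homogeneity. Truncations at level $k$ are handled by applying it to $\max(u,k)$-type functions.

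Second, we combine with Faber--Krahn. The support of $w(t)$ is the finite-measure set $\{u>k\}$, and Chebyshev gives $\mu_V(\{u>k\})\le k^{-r}\|u_0\|_{\ell^r}^r$. Since $\Lambda_p$ is decreasing, \eqref{uFK} applied to $f=(u^{\gamma}-k^\gamma)_+$, or a comparable truncation, bounds the dissipation from below by $\Lambda_p(\mu_V\{u>k\})\sum f^p\mu$. Then we run the iteration. Take levels $k_j=M(1-2^{-j-1})$ and times $t_j=t(1-2^{-j-1})$, and the quantities $Y_j=\sup_{s\in[t_j,t]}\sum_x (u-k_j)_+^{r}\mu$, with a time cutoff. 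Using $\mu_V\{u>k_{j+1}\}\le (k_{j+1}-k_j)^{-r}Y_j$ and the Hölder/interpolation between $(u-k)_+^r$ and $f^p$, we get a recursion
\begin{equation*}
Y_{j+1}\le C\,b^{j}\,\frac{Y_j}{t\,M^{q(p-1)-1}\,\Lambda_p\bigl(M^{-r}Y_j\bigr)} .
\end{equation*}
The power $M^{q(p-1)-1}$ comes from homogeneity. The growth conditions \eqref{assumponLamb} allow us to write $\Lambda_p(\lambda v)\ge \lambda^{-\nu}\Lambda_p(v)$ for $\lambda\le1$, which turns the recursion into a superlinear one, $Y_{j+1}\le Cb^jY_j^{1+\nu}\cdots$. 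The standard fast-geometric-convergence lemma then yields $Y_j\to0$, hence $u(t)\le M$, provided $tM^{q(p-1)-1}\Lambda_p(M^{-r}\|u_0\|_r^r)\ge C$.

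Finally, we solve for $M$. Write $M=\|u_0\|_{\ell^r}s^{1/r}$. The condition becomes $t\,\|u_0\|_{\ell^r}^{q(p-1)-1}s^{\frac{q(p-1)-1}{r}}\Lambda_p(s^{-1})\ge C$, i.e.\ $\psi_r(s)\ge C t^{-1}\|u_0\|_{\ell^r}^{-(q(p-1)-1)}$. Since $\psi_r$ is an increasing bijection when $q(p-1)>1$, the minimal admissible $s$ is $\varphi_r$ of that quantity, and the constant $C$ is absorbed using the doubling-type bounds from \eqref{assumponLamb}. This gives exactly the stated bound. Nearly matching the exponent $\frac{q(p-1)-1}{r}$ in $\psi_r$ is the check that the iteration exponents are consistent.

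The main obstacle is the energy step for general $r\ge1$ and arbitrary $q(p-1)>1$. The discrete setting lacks a chain rule, so the elementary inequality above must be proved with constants uniform in the truncation level. One must also justify the summation by parts on the infinite graph for $u\in L^\infty\ell^r$, where only $\ell^r$ control is available. For the latter I would first work with the exhaustion approximations $u_n$ on $B_n$ (finite sums) and pass to the limit, or use cutoff functions and the finiteness of the level sets $\{u>k\}$, $k>0$, which follows from $\mu_0>0$ and $u\in\ell^r$.
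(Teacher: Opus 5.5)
Your overall strategy (De Giorgi iteration with \eqref{uFK} in place of Sobolev, then solving for the level via $\varphi_r$) is the paper's. However, the two steps that carry the content do not work as you set them up.

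First, the displayed energy inequality admits no $k$-independent constant when $q>1$, for any $\beta$. There the test function is $(u-k)_+^{\alpha}$ and the dissipation is $|\nabla_{xy}(u^q-k^q)_+^{\beta}|^p$. Take $u(x)=k$ and $u(y)=a>k$.
\begin{itemize}
\item As $a\downarrow k$, the left side is $\approx (qk^{q-1})^{p-1}(a-k)^{p-1+\alpha}$ and the right side is $\approx c\,(qk^{q-1}(a-k))^{\beta p}$. This forces $\beta p\ge p-1+\alpha$.
\item As $a\to\infty$, the two sides behave like $a^{q(p-1)+\alpha}$ and $a^{q\beta p}$. This forces $\beta p\le p-1+\alpha/q$.
\end{itemize}
For $q>1$ these two requirements are incompatible. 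You flag uniformity in $k$ as the main obstacle, and your remark about $\max(u,k)$ points in a workable direction, but the inequality as written is false. The paper splits cases.
\begin{itemize}
\item For $q\ge1$ it truncates $u$ itself and uses $a^q-b^q\ge(a-h)^q-(b-h)^q$ for $a>b>h$ to reduce to the untruncated H\"older inequality. The dissipation is then $|\nabla_{xy}(u-h)_+^{\sigma/p}|^p$ with a $k$-free constant (\cref{monolem}, \cref{lemcacc}).
\item For $q<1$ it passes to $w=u^q$, which solves $\partial_t w^{1/q}=\Delta_p w$, and truncates $w$. The resulting non-power time term $H_k(w)$ is controlled by $(w-k_{i+1})_+^{\tilde\lambda}$ via $w\le\frac{2^{i+1}}{\rho_2-\rho_1}(w-k_{i+1})$ on $\{w>k_i\}$.
\end{itemize}
A correct form of your idea is to test with $(u^{\alpha}-k^{\alpha})_+$. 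This does give a $k$-free inequality for every $q$, but again with a non-power time term that must be compared across consecutive levels in the same way.

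Second, your recursion is for $Y_j=\sup_{[t_j,t]}\sum_x(u-k_j)_+^r\mu$, so it needs the energy inequality at exponent $r$, i.e.\ testing with $(u-k)_+^{r-1}$. Consider $r=1$, the case that gives the sharp $\ell^1$--$\ell^\infty$ rate and feeds \cref{thm:propagation-finite-support}. There this test gives no gradient term to which \eqref{uFK} can be applied, since your chain-rule constant is proportional to $\alpha=r-1$. If you instead run the energy at some $\lambda>1$, two things go wrong:
\begin{itemize}
\item the starting quantity $\sup\sum_x(u-M/2)_+^{\lambda}\mu$ is not controlled by $\|u_0\|_{\ell^1(V)}$ without an a priori $\ell^\infty$ bound;
\item going through $\ell^1\subset\ell^\lambda$ loses the sharp exponent.
\end{itemize}
What is missing is the paper's two-stage iteration, which decouples the energy exponent $\lambda>1$ from $r$.
\begin{itemize}
\item In the first stage, Faber--Krahn, H\"older and Young \eqref{appFK} give a \emph{linear} recursion $J_i\le\gamma J_{i+1}+(\text{measure term})$. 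Iterating it with $\gamma<2^{-\sigma/\delta}$, using only $\sup_jJ_j<\infty$, bounds the $\lambda$-energy above level $k(1-\rho_1)$ purely by the measure of $\{u>k(1-\rho_2)\}$; see \eqref{firstit}.
\item In the second stage, Chebyshev converts this into the superlinear recursion \eqref{beforeCh} for the level-set measures $Y_n$. Here $r$ enters only through $Y_0\le 2^rk^{-r}\|u_0\|_{\ell^r(V)}^r$.
\end{itemize}
A one-stage scheme like yours, with a correct energy inequality, is adequate when $r>1$, but not for the statement as posed.

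Two smaller points.
\begin{itemize}
\item The superlinearity comes from the $p/N$-condition in \eqref{assumponLamb}, which gives $\Lambda_p(\lambda v)\ge\lambda^{-p/N}\Lambda_p(v)$ for $\lambda\le1$. The $\nu$-condition gives the \emph{reverse} inequality $\Lambda_p(\lambda v)\le\lambda^{-\nu}\Lambda_p(v)$ there. It is used instead to absorb factors such as $2^{jr}$ in $\mu_V\{u>k_{j+1}\}\le2^{(j+2)r}M^{-r}Y_j$.
\item Your final condition $\psi_r(s)\ge Ct^{-1}\|u_0\|_{\ell^r(V)}^{-\delta}$, with $\delta=q(p-1)-1$, produces $\varphi_r(t^{-1}\|u_0\|_{\ell^r(V)}^{-\delta})$. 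This agrees with the displayed statement only when $r=1$. Your form is what the paper's proof actually yields: its argument is $t^{-1}E_r^{-\delta/r}$ with $E_r\le\|u_0\|_{\ell^r(V)}^r$. It is also the only form compatible with the invariance $u\mapsto\lambda u(\cdot,\lambda^{\delta}t)$ of the equation. With $\|u_0\|_{\ell^r(V)}^{-\delta/r}$ and $r>1$, letting $\lambda\to0$ would force $u\equiv0$. So the statement should be read that way, but you should not claim to have obtained ``exactly the stated bound''.
\end{itemize}
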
 
    
    \begin{theorem}\label{thm:propagation-finite-supportint}
        Assume that $G$ satisfies the Faber--Krahn inequality (\ref{uFK}) and that $q(p-1)>1$. 
        Let $u_0\in\ell^1(V)$ be nonnegative and have its support contained in a ball $B_{R_0}$, and let 
        $u\in L^{\infty}([0,T);\ell^1(V))$ be a solution of \eqref{eq:infinite-parabolic}. 
        Then for every $0<\varepsilon<1$ there exists a constant $c>0$ such that
        $$
        \|u(t)\|_{\ell^1(B_R)} \ge (1-\varepsilon)\|u_0\|_{\ell^1(V)}
        $$
        whenever
        \begin{equation}\label{suitR}
            R \ge \max\!\left\{ 
            \frac{c}{\varepsilon}\, t^{1/p}\, \|u_0\|_{\ell^1(V)}^{\frac{q(p-1)-1}{p}}\;
            \varphi_1\!\Bigl( t^{-1} \|u_0\|_{\ell^1(V)}^{-[q(p-1)-1]} \Bigr)^{\!\frac{q(p-1)-1}{p}},\;
            2 R_0 
        \right\}.
        \end{equation}
    \end{theorem}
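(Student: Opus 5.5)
We follow the continuous strategy of Andreucci--Tedeev and Grigor'yan--S\"urig, adapted to the discrete setting. The plan is to test the equation against a cut-off that is supported far from the origin and to control the mass leaking into the exterior of $B_R$. Fix $t$ and $R\ge 2R_0$, and let $\eta$ be a cut-off with $\eta=0$ on $B_{R/2}$, $\eta=1$ outside $B_R$, and $|\nabla_{xy}\eta|\le C/R$ along edges. Since $u_0$ vanishes where $\eta\neq0$, the identity
$$
\sum_x u(x,t)\eta(x)\mu(x)=\sum_x u_0\eta\mu-\frac12\int_0^t\sum_{x,y}(\nabla_{xy}u^q)^{p-1}\nabla_{xy}\eta\,\omega_{xy}\,ds
$$
together with Hölder's inequality gives
$$
\|u(t)\|_{\ell^1(V\setminus B_R)}\le \frac{C}{R}\int_0^t\sum_{\text{edges in }B_R\setminus B_{R/2-1}}|\nabla u^q|^{p-1}\omega\,ds .
$$
Justifying this identity (summability, differentiation under the sum) will use $u\in L^\infty([0,T);\ell^1)$, the comparison principle (\cref{thm:lr-comparison}, valid since $q(p-1)>1$) for nonnegativity, and an approximation of $\eta$ by finitely supported cut-offs.

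The core step is to estimate the gradient integral $\int_0^t\sum|\nabla u^q|^{p-1}\omega\,ds$. We would use a weighted energy (Bénilan--Crandall type) estimate: multiply the equation by $u^{q-?}$, or more precisely by $(s^{\alpha}u^{\beta})$ with a suitable $\beta<0$ so that $\sum|\nabla u^q|^p (u+\delta)^{-\gamma}$ is controlled. Then apply Hölder, splitting $|\nabla u^q|^{p-1}=\bigl(|\nabla u^q|^{p}u^{-1-\epsilon'}\bigr)^{(p-1)/p}u^{(1+\epsilon')(p-1)/p}$. Interpolating with the $\ell^1$ bound $\|u(s)\|_{\ell^1}\le\|u_0\|_{\ell^1}$ (mass nonincrease, from testing with $1$ via cut-offs) and the $\ell^\infty$ smoothing bound of \cref{thm:upper-boundint} with $r=1$, we expect
$$
\int_0^t\sum|\nabla u^q|^{p-1}\omega\,ds\le C\,t^{1/p}\|u_0\|_{\ell^1}^{1+\frac{q(p-1)-1}{p}}\,\varphi_1\!\Bigl(t^{-1}\|u_0\|_{\ell^1}^{-[q(p-1)-1]}\Bigr)^{\frac{q(p-1)-1}{p}} .
$$
Time integrability near $s=0$ should come from $\|u(s)\|_\infty\lesssim\varphi_1(\cdot)$ together with the growth conditions \eqref{assumponLamb}, which make the $s$-integral converge and be dominated by its value at $s=t$.

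Combining the two displays gives
$$
\|u(t)\|_{\ell^1(V\setminus B_R)}\le \frac{C}{R}\,t^{1/p}\|u_0\|_{\ell^1}^{1+\frac{q(p-1)-1}{p}}\varphi_1(\cdots)^{\frac{q(p-1)-1}{p}}\le\varepsilon\|u_0\|_{\ell^1}
$$
when $R$ satisfies \eqref{suitR}, with $c$ chosen appropriately. Mass conservation of $u$, which we need in order to conclude $\|u(t)\|_{\ell^1(B_R)}\ge(1-\varepsilon)\|u_0\|_{\ell^1}$, itself follows from the same estimate by letting $R\to\infty$: the right-hand side tends to $0$. We expect the main obstacle to be the gradient estimate, in particular the choice of exponents in the weighted energy inequality in the discrete setting. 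There the chain rule fails and must be replaced by elementary inequalities of the form $(a^q-b^q)^{p-1}(a^{\beta}-b^{\beta})\gtrsim |a^{\sigma}-b^{\sigma}|^p$, which must be established carefully for all $a,b\ge0$.
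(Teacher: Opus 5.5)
\textbf{There is a genuine gap: the core gradient estimate is only asserted, and the choices you indicate for it would not close.}

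Your outer frame matches the paper's. The exterior cutoff gives $\sum_{x\notin B_R}u(x,\tau)\mu(x)\le\frac{C}{R}\int_0^\tau\sum_{x,y}|\nabla_{xy}u^q|^{p-1}\omega_{xy}\,dt$. You then apply H\"older against a time-weighted energy, use the bound of Theorem~\ref{thm:upper-bound} with $r=1$, and finish with a monotonicity argument in time.

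The first problem is the sign of the test exponent. If you test with $t^\sigma u^\beta$ and $\beta<0$, then $\nabla_{xy}u^\beta$ has the opposite sign to $\nabla_{xy}u^q$. For $-1<\beta<0$ the only bound the identity yields is
\[
\frac12\int_0^\tau\sum_{x,y\in V}t^\sigma|\nabla_{xy}u^q|^{p-1}|\nabla_{xy}u^\beta|\,\omega_{xy}\,dt\le\frac{\tau^\sigma}{1+\beta}\sum_{x\in V}u(x,\tau)^{1+\beta}\mu(x).
\]
This is an $\ell^{1+\beta}$ quasi-norm with $1+\beta<1$, and it is not controlled by $\|u_0\|_{\ell^1(V)}$ and $\|u(\tau)\|_{\ell^\infty(V)}$. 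For $\tau>0$ the solution is positive on all of the infinite set $V$. Already at $\tau=0$ this quantity depends on how the mass is distributed, not only on its total. For $\beta\le-1$ the primitive of $s^\beta$ diverges at $0$ while $u(x,\tau)\to0$ at infinity. Regularizing with $(u+h)^\beta$ only gives bounds of size $h^\beta\|u_0\|_{\ell^1(V)}$, which degenerate as $h\to0$. So the test exponent must be positive.

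Your H\"older weight $u^{-1-\epsilon'}$ has a related problem. Heuristically $|\nabla u^q|^{p-1}|\nabla u^\kappa|\approx\frac{\kappa}{q}|\nabla u^q|^p u^{\kappa-q}$, so this weight comes from $\kappa=q-1-\epsilon'$. That is negative whenever $q\le1$, which is allowed by $q(p-1)>1$ (e.g.\ $q=\frac12$, $p=4$). Moreover, its conjugate factor $\sum_x u^{(1+\epsilon')(p-1)}\mu(x)$ is controlled by the $\ell^1$ and $\ell^\infty$ bounds only if $(1+\epsilon')(p-1)\ge1$.

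\textbf{The missing idea is the choice of exponents.}
\begin{itemize}
\item The paper uses the weight $(u^q(x)+u^q(y))^{-1/(q(p-1))}$, essentially $u^{-1/(p-1)}$. The conjugate factor is then $\int_0^\tau t^{-\sigma(p-1)}\sum_{x,y}(u^q(x)+u^q(y))^{1/q}\omega_{xy}\,dt\le C\|u_0\|_{\ell^1(V)}\tau^{1-\sigma(p-1)}$. This uses the mass alone, provided $\sigma(p-1)<1$.
\item It tests with $t^\sigma u^{\kappa}$, where $\kappa=q-\frac{1}{p-1}=\frac{\delta}{p-1}$ and $\delta=q(p-1)-1$. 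This exponent is positive exactly because $q(p-1)>1$.
\item The discrete replacement for the chain rule is only the elementary bound $|a-b|(a+b)^{\kappa'-1}\le\frac{1}{\kappa'}|a^{\kappa'}-b^{\kappa'}|$. Take $\kappa'=\frac{\delta}{\delta+1}\in(0,1)$, $a=u^q(y)$, $b=u^q(x)$, and note $q\kappa'=\kappa$. No inequality of the form $|a^\sigma-b^\sigma|^p\lesssim\cdots$ is needed.
\item The energy side is then at most $C\|u_0\|_{\ell^1(V)}\int_0^\tau t^{\sigma-1}\|u(t)\|_{\ell^\infty(V)}^{\delta/(p-1)}\,dt$. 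Theorem~\ref{thm:upper-bound} together with Lemma~\ref{lemdecreasing}, for $\theta<\sigma<\frac{1}{p-1}$, gives exactly $\tau^{1/p}\|u_0\|_{\ell^1(V)}^{1+\delta/p}\varphi_1(\cdot)^{\delta/p}$.
\end{itemize}
Any weight $u^{-\gamma}$ with $\frac1{p-1}\le\gamma<q$ would lead to the same final exponents. For $\gamma>\frac1{p-1}$, however, both H\"older factors involve $\|u\|_{\ell^\infty}$, and $\sigma$ must satisfy two compatible constraints.

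Finally, letting $R\to\infty$ in your tail bound only says that the tail of an $\ell^1$ function vanishes; it says nothing about mass loss. Mass conservation should be taken from Theorem~\ref{thm:mass-conservation}, which is valid since $q(p-1)\ge1$. Alternatively, obtain it by applying the gradient bound with an interior cutoff.
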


       By taking $\varepsilon=\frac12$ in \cref{thm:propagation-finite-supportint}, one obtains that, for $R$ as in (\ref{suitR}),
        \begin{equation}\label{eq:lower-bound}
            \|u(t)\|_{\ell^{\infty}(V)} \ge \frac{\|u_0\|_{\ell^1(V)}}{2\,\mu_V(B_R)}.
        \end{equation}
        In view of \cref{thm:lr-comparison}, \cref{thm:propagation-finite-supportint} and \eqref{eq:lower-bound} also hold for nonnegative initial data $u_0\in \ell^1(V)$.
        It is notable that Theorem \ref{thm:upper-boundint} and Theorem \ref{thm:propagation-finite-supportint} contain the corresponding results proved by Andreucci and Tedeev in \cite{Dan}.
        
	
	To state our next main result, we need the concept of $d$-isoperimetric inequality.
	For a subset $\Omega$ of $V$, define the edge boundary
	$$
	\partial\Omega:=\{\{x,y\}\in E:x\in\Omega,y\in V\backslash\Omega\}.
	$$
	We say that $G$ satisfies the $d$-isoperimetric inequality for a fixed $d>1$ if there exists a finite constant $C_d$ such that for all finite subsets $\Omega$ of $V$, 
	$$
	\mu_V(\Omega)^{\frac{d-1}{d}}\leq C_d\mu_E(\partial\Omega).$$
	\begin{theorem}\label{thm:finite-time-extinction-infinite}
		Assume that $G$ satisfies the $d$-isoperimetric inequality for some $d>p>1$ and that $q(p-1) <\frac{d-p}{d}$. Set $ m=\frac{d(1-q(p-1))}{p}>1$. Let $u \in L^{\infty}([0,+\infty);\ell^r(V))$ be a solution of \eqref{eq:infinite-parabolic} with $u_0\in\ell^{m}(V)$. If $q>1$, then there exists a constant $C(p,q,d)>0$ such that 
        $$
        u(\cdot,t)\equiv 0
        \quad\text{for all }\ t\ge C(p,q,d)\|u_0\|_{\ell^m(V)}^{mp/d}.
        $$
        Furthermore, if $u$ is an exhaustion solution, then this finite-time extinction result holds for the full range $q>0$.
	\end{theorem}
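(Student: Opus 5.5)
Our plan is the classical energy method for finite-time extinction: derive a differential inequality for $y(t)=\|u(t)\|_{\ell^m(V)}^m$ of the form $y'\le -c\,y^{\theta}$ with $\theta<1$. The exponent $m$ is chosen so that the isoperimetric (Sobolev) inequality closes the inequality exactly.

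\textbf{Step 1 (Sobolev inequality).} We first convert the $d$-isoperimetric inequality into an $\ell^1$-Sobolev inequality
$$\|f\|_{\ell^{d/(d-1)}(V)}\le C\sum_{x,y}|\nabla_{xy}f|\,\omega_{xy}$$
for finitely supported $f$. This uses the standard coarea argument on level sets $\{|f|>s\}$. Applying it to $|f|^{\gamma}$ with $\gamma=\frac{p(d-1)}{d-p}$, and using Hölder together with the elementary bound $|a^\gamma-b^\gamma|\le \gamma|a-b|(|a|^{\gamma-1}+|b|^{\gamma-1})$, we obtain the $p$-Sobolev inequality
$$\|f\|_{\ell^{p^*}(V)}^p\le C\sum_{x,y}|\nabla_{xy}f|^p\omega_{xy},\qquad p^*=\frac{dp}{d-p}.$$
We will use it for functions of the form $f=u^{\beta}$.

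\textbf{Step 2 (energy identity).} Multiply the equation by $u^{m-1}\mu(x)$, sum over $x$, and sum by parts:
$$\frac{1}{m}\frac{d}{dt}\sum|u|^m\mu=-\frac12\sum_{x,y}(\nabla_{xy}u^q)^{p-1}\nabla_{xy}u^{m-1}\,\omega_{xy}.$$
We then apply the discrete chain-rule inequality
$$(a^q-b^q)^{p-1}(a^{m-1}-b^{m-1})\ge c\,|a^{\beta}-b^{\beta}|^p,\qquad \beta=\frac{q(p-1)+m-1}{p}.$$
This is an elementary two-point inequality valid for signed powers; we would prove it by homogeneity, reducing to one variable. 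The result is
$$y'\le -c\sum_{x,y}|\nabla_{xy}u^\beta|^p\omega_{xy}\le -c\,\|u^\beta\|_{\ell^{p^*}}^p .$$
The choice $m=\frac{d(1-q(p-1))}{p}$ is exactly the one making $\beta p^*=m$; indeed $\beta=\frac{m(d-p)}{dp}$ follows from this formula for $m$. Hence $\|u^\beta\|_{\ell^{p^*}}^p=y^{(d-p)/d}$, and
$$y'\le -c\,y^{1-p/d}.$$
Integrating gives $y(t)^{p/d}\le y(0)^{p/d}-ct$, so extinction occurs by time $C\|u_0\|_{\ell^m}^{mp/d}$. The condition $m>1$ makes $u^{m-1}$ a legitimate, increasing test function.

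\textbf{Step 3 (justification; main obstacle).} The summation by parts on the infinite graph must be justified. For exhaustion solutions we carry out the whole computation on the finite ball $B_n$ with zero boundary values, where all sums are finite. There the Sobolev inequality applies directly, so each $u_n$ vanishes after the stated time, uniformly in $n$, and the conclusion passes to the pointwise limit. This works for any $q>0$.

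For a general solution $u\in L^\infty([0,\infty);\ell^r)$ we expect the key difficulty. We would insert a cutoff $\eta_R$ that equals $1$ on $B_R$ and vanishes outside $B_{2R}$, and show that the commutator term
$$\sum(\nabla u^q)^{p-1}u^{m-1}\nabla\eta_R$$
tends to $0$. This requires summability of $|u|^{q(p-1)+m-1}$-type quantities, and for bounded $u$ in $\ell^r$ this is where the hypothesis $q>1$ should enter: the exponents $q(p-1)+m-1$ and $m$ relate to $r$ favourably, for instance through $|u|^q\le \|u\|_\infty^{q-1}|u|$. Failing that, we would instead use the comparison Theorem \ref{thm:lr-comparison} to dominate $|u|$ by exhaustion solutions.
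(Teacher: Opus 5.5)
Your core computation is correct and is the paper's: the test function $u^{m-1}$, the two-point inequality with $\beta=\frac{q(p-1)+m-1}{p}$, the identity $\beta p^*=m$, and the inequality $y'\le -c\,y^{1-p/d}$. Your treatment of exhaustion solutions on $B_n$ is exactly the paper's Case~2. Deriving the $p$-Sobolev inequality yourself via coarea, instead of citing Hua--Mugnolo, is harmless.

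The gap is the first assertion: an arbitrary solution $u\in L^\infty([0,\infty);\ell^r(V))$ with $q>1$. Your Step~3 leaves this open, and neither suggested route works.
\begin{itemize}
\item The cutoff route needs something like $\sup_t\sum_x|u(x,t)|^{q(p-1)+m-1}\mu(x)<\infty$. But $q(p-1)+m-1=\beta p<m$, while $r\ge1$ is arbitrary. The bound $\mu\ge\mu_0$ only gives $\ell^r\subset\ell^s$ for $s\ge r$, so nothing controls this sum when $r>\beta p$. For $r>m$, even $y(t)=\|u(t)\|_{\ell^m(V)}^m$ is not known to be finite.
\item The hypothesis $q>1$ cannot rescue the cutoff route. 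The power produced by $|\nabla_{xy}u^q|^{p-1}$ is $q(p-1)<1$, so the bound $|u|^q\le\|u\|_\infty^{q-1}|u|$ is beside the point. Indeed, if your cutoff argument worked it would never use $q>1$ at all.
\item The comparison fallback is unavailable. Theorem~\ref{thm:lr-comparison} requires $q(p-1)\ge r\ge 1$, whereas here $q(p-1)<\frac{d-p}{d}<1$.
\end{itemize}

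The missing idea is truncation at a positive level. For $k>0$ put $g=(u-k)_+$.
\begin{itemize}
\item Since $u(\cdot,t)\in\ell^r(V)$ and $\mu\ge\mu_0>0$, the set $\{u(\cdot,t)>k\}$ is finite. So $g(\cdot,t)$ is finitely supported and $E_{m,k}(t)=\sum_x g(x,t)^m\mu(x)$ is finite.
\item Lemma~\ref{lem:Green-formula} then applies directly to the test function $g^{m-1}$, with no cutoff and no commutator term.
\item This is exactly where $q\ge1$ enters. Convexity and superadditivity of $s\mapsto s^q$ on $[0,\infty)$ give $|\nabla_{xy}u^q|\ge|\nabla_{xy}g^q|$. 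Hence $(\nabla_{xy}u^q)^{p-1}\nabla_{xy}g^{m-1}\ge|\nabla_{xy}g^q|^{p-1}|\nabla_{xy}g^{m-1}|\ge c\,|\nabla_{xy}g^{\beta}|^p$.
\item Your Sobolev step then yields $E_{m,k}'\le -C'E_{m,k}^{1-p/d}$. So $u\le k$ for $t\ge C\,E_{m,k}(0)^{p/d}$, with $E_{m,k}(0)\le\|u_0\|_{\ell^m(V)}^m$.
\item Letting $k\downarrow0$ and applying the same argument to $-u$ gives the claim.
\end{itemize}
For $q<1$ the inequality $|\nabla_{xy}u^q|\ge|\nabla_{xy}g^q|$ fails; for instance, take $u(x)=k+\epsilon$ and $u(y)=k+2\epsilon$ with $\epsilon$ small. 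That is why the general-solution statement is restricted to $q>1$, while exhaustion solutions are handled for all $q>0$ through the finite problems on $B_n$.
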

    
	When $q=1$, Theorem \ref{thm:finite-time-extinction-infinite} was proved in Theorem 1.4 in \cite{Hua1}.
	
	\begin{example}
	    As an example, we now discuss the implications of our results for Cayley graphs with polynomial growth. Let $G=(\Gamma, S)$ be the Cayley graph of a discrete group ${\Gamma}$ with a finite symmetric generating set $S$, where the weights $\omega$ satisfy $\omega_{xy}=1$ whenever $x\sim y$. Denote by $e$ the identity element of $\Gamma$ and define 
        $$ 
        B_R = \{x \in V : d (x, e) \leq R\}
        $$
        as the ball centered at e with radius $R>0$. We assume that $G=(\Gamma, S)$ has polynomial growth of order $N>1$, that is, there exists a constant $C>0$ such that
        $$
        \frac{1}{C}(1+R)^N\leq\mu_V(B_{R})\leq C(1+R)^N,\quad \forall R> 0.
        $$
        It follows from \cref{lem:Cayley-isoperimetric} that $G$ satisfies the $N$-isoperimetric inequality. By \cref{thm:Cayley-Faber--Krahn}, we can take $\Lambda_p(v)=\gamma_0 v^{-\frac{p}{N}}$ for some constant $\gamma_0>0$. It follows that $\varphi_1(s)$ is a constant multiple of $s^{\frac{N}{N[q(p-1)-1]+p}}$. 

        We first consider the finite-time extinction properties of exhaustion solutions. When $0 < q(p-1) < \frac{N-p}{N}$, \cref{thm:finite-time-extinction-infinite} shows that any exhaustion solution of \eqref{eq:infinite-parabolic} with initial data $u_0 \in \ell^1(\Gamma)$ vanishes in finite time; that is, there exists $t_0 > 0$ such that $u(x,t) = 0$ for all $x \in V$ and all $t \ge t_0$. In contrast, when $q(p-1) \ge \frac{N-p}{N}$, \cref{thm:mass-conservation-exhaustion-solution} implies that any exhaustion solution of \eqref{eq:infinite-parabolic} with non-trivial nonnegative initial data $u_0 \in \ell^1(\Gamma)$ preserves its total mass over time, and therefore cannot vanish in finite time. This establishes a sharp dichotomy in the qualitative behavior of exhaustion solutions.

        Now assume that $q(p-1)> 1$. It follows from \cref{thm:upper-boundint} that there exists a constant $C>0$ such that for each solution $u\in L^{\infty}([0,+\infty);\ell^1(V))$ of \eqref{eq:infinite-parabolic}, we have
        $$
        \|u(t)\|_{\ell^{\infty}(\Gamma)} \le C t^{-\alpha_*},\qquad\forall t>0,
        $$
        where 
        $$
        \alpha_{*} := \frac{N}{N[q(p-1)-1] + p}.
        $$
        If, in addition, the initial data $u_0\in\ell^1(V)$ is nonnegative, we obtain from \cref{thm:propagation-finite-supportint} and \eqref{eq:lower-bound} that there exists a constant $\gamma>0$ such that 
        $$
        \|u(t)\|_{\ell^{\infty}(\Gamma)} \ge \gamma (1+t)^{-\alpha_{*}},\qquad \forall t>0.
        $$
        In this sense, the upper bound obtained above is sharp.
	\end{example}

    The structure of the paper is as follows. In Section \ref{sec:existence-comparison}, we prove the existence theorem, comparison principles, and conservation of mass for solutions of \eqref{eq:infinite-parabolic}.  In Section \ref{sec:asymptotic-propagation}, we establish the asymptotic estimates stated in Theorems \ref{thm:upper-boundint} and \ref{thm:propagation-finite-supportint}. Section \ref{sec:conservation-extinction} is devoted to the finite-time extinction result Theorem \ref{thm:finite-time-extinction-infinite}. In Section \ref{sec:finite-graph-classification} we investigate solutions of (\ref{eq:infinite-parabolic}) on finite graphs.
    We prove that these solutions extinct for $q(p-1)<1$
    and remain strictly positive for $q(p-1)\geq 1$. Finally, in Appendix \ref{sec:appendix}, we construct Faber--Krahn functions for certain classes of graphs.
  

	\section{Existence, Comparison Principles and Conservation of Mass}\label{sec:existence-comparison}
	
	\subsection{Existence}\label{subsec:existence}

    \begin{lemma}\label{lem:Green-formula}
        Let $u(x,t)$ be a function on $V\times[0,T)$ and $v(x)$ be a function on $V$ with finite support. Then
        $$
        \sum_{x\in V} (\Delta_p u^q(x,t)) v(x)\mu(x)=-\frac{1}{2}\sum_{x,y\in V}(\nabla_{xy}u^q(x,t))^{p-1} \nabla_{xy} v(x)\omega_{xy}.
        $$
    \end{lemma}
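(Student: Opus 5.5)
The plan is to carry out a discrete summation by parts, exploiting the antisymmetry of the summand and the finiteness of the support of $v$ to justify every rearrangement.

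\textbf{Reduction to finite sums.} First expand the left-hand side using the definition of $\Delta_p$:
$$
\sum_{x\in V} (\Delta_p u^q(x,t)) v(x)\mu(x)=\sum_{x\in V}\sum_{y\in V}(\nabla_{xy}u^q)^{p-1}\,v(x)\,\omega_{xy}.
$$
Only $x\in\operatorname{supp}v$ contributes, and this set is finite. For each such $x$ only the finitely many neighbours $y\sim x$ contribute, since every vertex has finite degree and $\omega_{xy}=0$ otherwise. Hence the double sum is a finite sum, and it may be reordered and split freely. The same holds for the right-hand side: $\nabla_{xy}v\,\omega_{xy}\neq 0$ forces $x\sim y$ with $x$ or $y$ in $\operatorname{supp}v$, and there are finitely many such pairs.

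\textbf{Splitting and symmetrization.} Write $\nabla_{xy}v=v(y)-v(x)$ and split the right-hand side:
$$
-\tfrac12\sum_{x,y}(\nabla_{xy}u^q)^{p-1}\nabla_{xy}v\,\omega_{xy}=\tfrac12\sum_{x,y}(\nabla_{xy}u^q)^{p-1}v(x)\omega_{xy}-\tfrac12\sum_{x,y}(\nabla_{xy}u^q)^{p-1}v(y)\omega_{xy}.
$$
Each piece is again a finite sum, by the same support argument. In the second piece, swap the names $x\leftrightarrow y$ and use two facts:
\begin{itemize}
\item $\omega_{yx}=\omega_{xy}$;
\item $(\nabla_{yx}u^q)^{p-1}=-(\nabla_{xy}u^q)^{p-1}$, since $k\mapsto k^{p-1}=|k|^{p-2}k$ is odd and $\nabla_{yx}f=-\nabla_{xy}f$.
\end{itemize}
The second piece thus becomes $+\tfrac12\sum_{x,y}(\nabla_{xy}u^q)^{p-1}v(x)\omega_{xy}$. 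Adding the two pieces gives exactly the expanded left-hand side.

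\textbf{Main point of care.} The only real obstacle is justifying the splitting and reindexing without any integrability assumption on $u$. This is handled entirely by the finite support of $v$ together with local finiteness of the graph, which make all sums involved finite. In this argument $t$ is just a fixed parameter.
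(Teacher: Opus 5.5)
Your proof is correct and is essentially the paper's argument: the paper sets $A(x,y)=(\nabla_{xy}u^q)^{p-1}\omega_{xy}$, uses the antisymmetry $A(x,y)=-A(y,x)$, and symmetrizes $\sum_{x,y}A(x,y)v(x)$ by splitting $\nabla_{xy}v$ and swapping $x\leftrightarrow y$, exactly as you do. Your explicit check that every sum involved is finite, using the finite support of $v$ and local finiteness, supplies the justification for splitting and reindexing that the paper leaves implicit.
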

    \begin{proof}
        Set $A(x, y)=(\nabla_{xy}u^q(x,t))^{p-1}
    	\omega_{xy}$. By definition, $\Delta_p u^q(x,t)=\sum_{y\in V}A(x, y)$. 
        Since $A(x, y)=-A(y, x)$, we have
        $$
        \begin{aligned}
            \sum_{x\in V} (\Delta_p u^q(x,t)) v(x)\mu(x)=\sum_{x,y\in V} A(x,y) v(x)
            &=-\frac{1}{2} 	\sum_{x,y\in V}A(x, y) \nabla_{xy}v(x)\\
            &=-\frac{1}{2}\sum_{x,y\in V}(\nabla_{xy}u^q(x,t))^{p-1} \nabla_{xy} v(x)\omega_{xy},
        \end{aligned}
        $$
        which completes the proof.
    \end{proof}
    
    \begin{proof}[Proof of Theorem \ref{thm:existence-lr}]	
    	Let $x_0$ be an arbitrary vertex in $V$. By the Peano existence theorem, the ordinary differential equation system	
    	\[
    	\begin{cases}
    		\displaystyle
    		\frac{\partial u_n}{\partial t}(x,t)
    		=
    		\Delta_p u_n^q(x,t),
    		&
    		x\in B_n(x_0),\ t>0,
    		\\[1ex]
    		u_n(x,0)=u_0(x),
    		&
    		x\in B_n(x_0),
    		\\[1ex]
    		u_n(x,t)=0,
    		&
    		x\notin B_n(x_0),\ t\ge 0,
    	\end{cases}
    	\]
    	has a solution for sufficiently small $t>0$. Let us show that the
    	solution exists for all $t>0$.
    	Let $[0,T)$ be the right-sided maximal interval of existence. If
    	$T<+\infty$, then $u_n(x,t)$ must be bounded on $[0,T)$.
    	From the ODE system, we have
    	\[
    	u_n(x,t)_{+}^{\,r-1}
    	\frac{\partial u_n}{\partial t}(x,t)\mu(x)
    	=
    	u_n(x,t)_{+}^{\,r-1}
    	\Delta_pu_n^q(x,t)\mu(x),
    	\]
    	for all $x\in V$ and $0<t<T$.
    	Summing over $x$ and integrating over $(0, t)$, we obtain
    	\begin{equation*}
    	    \frac1r\left(\sum_{x\in V}u_n(x,t)_+^r\mu(x)-\sum_{x\in V}u_0(x)_+^r\mu(x)\right)=\int_{0}^{t}\sum_{x\in V}(\nabla_{xy}u_n^q(x,s))^{p-1}u_n(x,s)_+^{\,r-1}\mu(x)ds.
    	\end{equation*}
        By \cref{lem:Green-formula}, we have
        \begin{equation}\label{partint}
            \sum_{x\in V}(\nabla_{xy}u_n^q(x,s))^{p-1}u_n(x,s)_+^{\,r-1}\mu(x)ds=	-\frac12\sum_{x,y\in V}|\nabla_{xy}u_n^q(x,s)|^{p-1}\left|\nabla_{xy}\!\left(u_n(x,s)_+^{\,r-1}\right)\right|\omega_{xy},
        \end{equation}
    	which is negative. Therefore, 	
        $$
        \sum_{x\in V}
        	u_n(x,t)_+^r\mu(x)
        	\le
        	\|u_0\|_{\ell^r(V)}^r.
        $$
        	Similarly,
        $\sum_{x\in V}
    	u_n(x,t)_-^r\mu(x)
    	\le
    	\|u_0\|_{\ell^r(V)}^r.$
    	Since $\inf_{x\in V}\mu(x)>0$	
    	we conclude that $u_n(x,t)$ is uniformly bounded with respect to
    	$(x,t)\in V\times[0,T)$ and $n\in\mathbb N$.
        Moreover, $$|\partial_tu_n(x,t)|=\frac1{\mu(x)}
\left|
\sum_{y\sim x}
\bigl(\nabla_{xy}u_n^q(x,t)\bigr)^{p-1}
\omega_{xy}
\right|
\le
\frac1{\mu(x)}
\sum_{y\sim x}
|\nabla_{xy}u_n^q(x,t)|^{p-1}\omega_{xy}
\le C,
$$
where $C>0$ is independent of $x, t$ and $n$.
        
    	Hence, 	$\{u_n(x,t)\}_{n\ge1}$	
    	is uniformly bounded and equicontinuous on every compact time interval.
    	Using a standard diagonal process, we can extract a subsequence $\{u^{(k)}_{n_k}(x,t)\}_{k\ge1}$ such that, for all $x\in V$, $u^{(k)}_{n_k}(x,t)$ converges uniformly on on every compact time interval.	
    	Let $ u(x,t)$ be the limit function of
    	$\{u^{(k)}_{n_k}(x,t)\}_{k\ge1}$.
        It remains to show that $u(x,t)$ is a solution of (\ref{eq:infinite-parabolic}).
        For every fixed vertex $x\in V$ and all sufficiently large $n$, we have
$x\in B_n(x_0)$. Hence,
$$
u_n(x,t)
=
u_0(x)
+
\int_0^t
\Delta_pu_n^q(x,s)\,ds.
$$
Since the graph is locally finite, the operator
$$
\Delta_pu_n^q(x,s)
=
\frac1{\mu(x)}
\sum_{y\sim x}
\bigl(u_n^q(y,s)-u_n^q(x,s)\bigr)^{p-1}\omega_{xy}
$$
contains only finitely many terms. Moreover, for every neighbour $y\sim x$, the
convergence
$
u_n(y,\cdot)\to u(y,\cdot)
$ implies that 
$$
\Delta_pu_n^q(x,\cdot)
\to
\Delta_pu^q(x,\cdot)
$$
uniformly on every compact time interval. Passing to the limit in the above
integral identity, we obtain
$$
u(x,t)
=
u_0(x)
+
\int_0^t
\Delta_pu^q(x,s)\,ds.
$$
Since the integrand is continuous, the fundamental theorem of calculus implies
that $u(x,\cdot)\in C^1([0,\infty))$ and
$u$ is a solution of (\ref{eq:infinite-parabolic}).
Then $ u\in	L^\infty([0,+\infty);\ell^r(V))$ follows from Fatou's Lemma. \end{proof}

\subsection{Nonnegativity and a Comparison Principle}\label{subsec:nonnegativity}

        We will frequently use the following observation.
        \begin{remark}\label{rem:fk-measure-lower}
        Recall that we assume
        $\mu_0=\inf_{x\in V}\mu(x) > 0$.
        Therefore, for any $v\in\ell^r(V)$ and any $x\in V$,
        $$
        |v(x)| \le \mu_0^{-1/r} \|v\|_{\ell^r(V)}.
        $$
        When $s>r$, this yields
        $$
        \sum_{x\in V} |v(x)|^s \mu(x) 
            \le \mu_0^{-(s-r)/r} \|v\|_{\ell^r(V)}^{s-r} 
              \sum_{x\in V} |v(x)|^r \mu(x),
        $$
        so that $\ell^r(V)\subset\ell^s(V)$ for $s>r$.
    \end{remark}
    
        \begin{theorem}\label{thm: nonneg}
    	Assume that $q(p-1)\ge1$ and let $u\in L^\infty([0,T);\ell^r(V))$ 
    	be a solution of equation (\ref{eq:infinite-parabolic}), where $r>1$ and
    	$T\in(0,+\infty]$.	
    	Then for $h\in\mathbb R\setminus\{0\}$, we have:
    		\begin{enumerate}
    		\item If $u_0\le h$, then $u(x,t)<h$ for all $t\in(0,T)$.
    		\item If $u_0\ge h$, then $u(x,t)>h$ for all $t\in(0,T)$.
    		\item If $u_0=0$, then $u(x,t)\equiv 0.$
    	\end{enumerate}
    \end{theorem}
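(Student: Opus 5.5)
The plan is to first prove the non-strict statements ($u_0\le h\Rightarrow u\le h$, and symmetrically), then upgrade them to strict inequalities by a local ODE argument along paths in the connected graph. Item (3) then follows from (1) and (2): applying them with $h=\pm\varepsilon$ gives $-\varepsilon<u<\varepsilon$ for every $\varepsilon>0$, hence $u\equiv 0$. By the symmetry $u\mapsto -u$, which maps solutions to solutions because $k^s$ is odd, and which swaps (1) and (2), it suffices to treat (1). Since $u_0\in\ell^r$ forces $u_0(x)\to 0$ at infinity, $u_0\le h$ with $h<0$ is impossible. So we may assume $h>0$.

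\emph{Step 1 (weak maximum principle).} Set $w=(u-h)_+$ and consider the energy
\[
J(t)=\sum_{x\in V}(u(x,t)-h)_+^{2}\,\mu(x).
\]
This is finite, because $\{x:|u(x,t)|\ge h\}$ has $\mu_V$-measure at most $h^{-r}\|u(t)\|_{\ell^r}^r$, uniformly in $t\in[0,T')$ for every $T'<T$. Formally, differentiating and applying \cref{lem:Green-formula} with the test function $v=(u-h)_+$ gives
\[
J'(t)=-\sum_{x,y}(\nabla_{xy}u^q)^{p-1}\,\nabla_{xy}(u-h)_+\,\omega_{xy}\le 0,
\]
because $s\mapsto s^q$ and $s\mapsto(s-h)_+$ are both nondecreasing, so each summand is nonnegative. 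Hence $J(t)\le J(0)=0$.

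\emph{Main obstacle in Step 1.} The test function need not have finite support, and $J$ need not obviously be differentiable termwise. I would handle this by a spatial cutoff. Replace $v$ by $v\eta_R$, where $\eta_R$ is a cutoff equal to $1$ on $B_R$ and decaying linearly to $0$ on $B_{2R}$, and use that every vertex has finite degree. The error term is a sum over edges in $B_{2R}\setminus B_{R-1}$ of
\[
|\nabla u^q|^{p-1}(u-h)_+\,|\nabla\eta_R|.
\]
It is dominated by $\sum_{x\notin B_{R-1}}|u|^{q(p-1)}(u-h)_+\mu(x)$, up to constants. This tail must be controlled in time-integrated form by the $L^\infty_t\ell^r$ bound. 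This is where I expect $q(p-1)\ge 1$ to enter: since $|u|\le\mu_0^{-1/r}\|u\|_{\ell^r}$ pointwise (\cref{rem:fk-measure-lower}), we get $|u|^{q(p-1)}\le C|u|$. The tail is therefore bounded by a tail of an $\ell^r$-type sum, which tends to $0$ as $R\to\infty$ by dominated convergence in $t$. With the cutoff in place, the argument above gives $J\equiv0$, that is, $u\le h$.

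\emph{Step 2 (strictness).} Let $v=h-u\ge 0$. Since $h\ne0$ and $u$ is bounded, $s\mapsto s^q$ is smooth and bi-Lipschitz on a neighbourhood of $h$. Using $u(y)\le h$ for all neighbours $y$, I will derive for each $x$ the inequality
\[
\partial_t v(x,t)\ \ge\ \frac{1}{\mu(x)}\Bigl[c\sum_{y\sim x}v(y,t)^{p-1}\omega_{xy}\;-\;C\,v(x,t)^{p-1}\mu(x)\Bigr],
\]
valid while $v(x,t)$ is small. Once $v(x,t_0)>0$, the difference $v(x,\cdot)$ cannot return to $0$: near a zero the right-hand side is $\ge -Cv^{p-1}$, and the neighbour term is nonnegative. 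For $p\ge2$ this is Gronwall; for $p<2$ I would compare with the explicit ODE $z'=-Cz^{p-1}$, whose solution stays positive on a short interval, and iterate on small intervals. Conversely, if $v(y,t_0)>0$ for some neighbour $y$ of $x$, then $\partial_t v(x,t)>0$ wherever $v(x,t)$ is small, so $v(x,t)>0$ for $t>t_0$.

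\emph{Propagation and conclusion.} Since $u_0\in\ell^r$ and $h>0$, there is some $x_1$ with $u_0(x_1)<h$. Positivity of $v$ propagates from $x_1$ along paths: in time $\delta$ it reaches all vertices at distance $\le k$ by time $k\delta$, for arbitrarily small $\delta$. Connectedness of $G$ then gives $v(x,t)>0$ for all $x\in V$ and $t\in(0,T)$, which is (1).
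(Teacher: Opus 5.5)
Your reductions and Step 1 are sound and essentially follow the paper's argument. The paper tests with $\zeta(u-h)_+^{r-1}$ rather than $\eta_R(u-h)_+$. The cutoff error is even easier to control than you suggest: $(u-h)_+$ lives on a set of $\mu_V$-measure at most $h^{-r}\sup_t\|u(t)\|_{\ell^r(V)}^r$, and $u$ is bounded.

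\textbf{The gap is in Step 2.} It is the persistence claim that once $v(x,t_0)>0$, the function $v(x,\cdot)$ cannot return to $0$. Your propagation from $x_1$ uses this at every vertex. Your converse claim about a positive neighbour $y$ also needs $v(y,\cdot)$ to stay positive, which is the same claim again.

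For $1<p<2$ the inequality $\partial_t v\ge -Cv^{p-1}$ cannot give persistence, and this range is compatible with $q(p-1)\ge1$ (e.g.\ $p=3/2$, $q=2$). The comparison solution of $z'=-Cz^{p-1}$, $z(t_0)=z_0>0$, vanishes at $t^*=t_0+z_0^{2-p}/(C(2-p))$. Restarting the comparison at any $t'<t^*$ from the only available lower bound $z(t')$ reproduces the same extinction time $t^*$. So iterating on small intervals never gets past $t^*$.

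Near $u=h\neq0$ the map $s\mapsto s^q$ is a diffeomorphism. Locally the equation is just the $p$-Laplacian, and the self-term is genuinely sublinear. What actually stops $v(x,\cdot)$ from vanishing is the coupling to the neighbours. Your argument only uses that coupling under the unproven assumption that the neighbours stay positive, so for $p<2$ it is circular.

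\textbf{The missing idea is to argue at a single time instead of propagating forward.} Suppose $v(x,t_1)=0$ for some $x\in V$ and $t_1\in(0,T)$. Since $v\ge 0$ by Step 1, $t_1$ is an interior minimum of the $C^1$ function $v(x,\cdot)$, so $\partial_t v(x,t_1)=0$. At that instant the self-term vanishes and the equation reads
\[
0=\partial_t v(x,t_1)=\frac{1}{\mu(x)}\sum_{y\sim x}\bigl(h^q-u(y,t_1)^q\bigr)^{p-1}\omega_{xy}.
\]
This is a sum of nonnegative terms, so $u(y,t_1)=h$ for every $y\sim x$. Repeating this at each neighbour and using connectedness gives $u(\cdot,t_1)\equiv h\neq 0$. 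That contradicts $u(\cdot,t_1)\in\ell^r(V)$ on an infinite graph.

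This is exactly how the paper proves strictness. It needs no bi-Lipschitz comparison, no distinguished vertex $x_1$, and no case split between $p\ge2$ and $p<2$. Your Gronwall argument for $p\ge2$ is fine, but it becomes unnecessary.
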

    
    \begin{proof}	
    	Since $-u$ is a solution of equation (\ref{eq:infinite-parabolic}) with initial value $-u_0$, without loss of generality we may assume $h>0$ and only prove the first statement.
    		
    	As $\inf_{x\in V}\mu(x)>0$ and $u\in L^\infty([0,T);\ell^r(V)),$	
    	it follows from Remark \ref{rem:fk-measure-lower} that for all $l>r$, $u\in L^\infty([0,T);\ell^l(V))$.
    	Fix $x_0\in V$. For given $R_2\ge R_1+1>1$, define the cutoff
    	function $\zeta_{R_1,R_2}(x)$ on graph $G$ by	
    	\begin{equation}\label{cutoff}
    	\zeta_{R_1,R_2}(x)
    	=
    	\begin{cases}
    		1,
    		&
    		x\in B_{R_1}(x_0),
    		\\[1ex]
    		\displaystyle
    		\frac{R_2-d(x,x_0)}{R_2-R_1},
    		&
    		x\in B_{R_2}(x_0)\setminus B_{R_1}(x_0),
    		\\[2ex]
    		0,
    		&
    		x\notin B_{R_2}(x_0).
    	\end{cases}
    	\end{equation}
    	Then for $y\sim x$,	
    	\[
    	|\nabla_{xy}\zeta_{R_1,R_2}(x)|
    	=
    	|\zeta_{R_1,R_2}(y)-\zeta_{R_1,R_2}(x)|
    	\le
    	\frac1{R_2-R_1}.
    	\]
    	Let us
    	simply denote $\zeta_{R_1,R_2}(x)$ by $\zeta(x)$.
    	Multiplying both sides of the equation in (\ref{eq:infinite-parabolic}) by $\zeta(x)(u(x,t)-h)_+^{\,r-1}\mu(x)$, 	
    	summing over $x$, and integrating over $t$, we obtain as in (\ref{partint}),
    	\[
    	\frac1r
    	\sum_{x\in V}
    	\zeta(x)(u(x,\tau)-h)_+^r\mu(x)
    	-
    	\frac1r
    	\sum_{x\in V}
    	\zeta(x)(u_0(x)-h)_+^r\mu(x)
    	=
    	-\frac12(I_1+I_2),
    	\]
    	where
    	\[
    	I_1
    	=
    	\int_0^\tau
    	\sum_{x,y\in V}
    	|\nabla_{xy}(u^q(\cdot,t))|^{p-2}
    	\nabla_{xy}(u^q(\cdot,t))
    	\zeta(y)
    	\nabla_{xy}\!\left(
    	(u(x,t)-h)_+^{\,r-1}
    	\right)
    	\omega_{xy}\,dt,
    	\]
    	and
    	\[
    	I_2
    	=
    	\int_0^\tau
    	\sum_{x,y\in V}
    	|\nabla_{xy}(u^q(\cdot,t))|^{p-2}
    	\nabla_{xy}(u^q(\cdot,t))
    	\nabla_{xy}(\zeta(x))
    	(u(x,t)-h)_+^{\,r-1}
    	\omega_{xy}\,dt.
    	\]
    	
    	Now we estimate $I_1$ and $I_2$ separately.
    	Since $\nabla_{xy}(u^q(\cdot,t))
    	\nabla_{xy}\!\left(
    	(u(x,t)-h)_+^{\,r-1}
    	\right)
    	\ge0,$
    	we always have
    	$I_1\ge0$.
    	For $I_2$, we have	
    	\begin{align*}
    	|I_2|
    	&\le
    	\frac{1}{R_2-R_1}
    	\int_0^\tau
    	\sum_{x,y\in B_{R_2+1}(x_0)}
    	|\nabla_{xy}(u^q(\cdot,t))|^{p-1}
    	|u(x,t)|^{r-1}
    	\omega_{xy}\,dt\\& \leq \frac{2^{p-1}}{R_2-R_1}
    	\int_0^\tau
    	\sum_{x,y\in B_{R_2+1}(x_0)}
    	\left(
    	|u(y,t)|^{q(p-1)}
    	+
    	|u(x,t)|^{q(p-1)}
    	\right)
    	|u(x,t)|^{r-1}
    	\omega_{xy}\,dt \\&\leq \frac{2^{p-1}}{R_2-R_1}
    	\int_0^\tau
    	\left[
    	\sum_{x,y\in V}
    	|u(y,t)|^{q(p-1)}
    	|u(x,t)|^{r-1}
    	\omega_{xy}
    	+
    	\sum_{x\in V}
    	|u(x,t)|^{q(p-1)+r-1}
    	\mu(x)
    	\right]dt.
    	\end{align*}		
    	Further, we have by Hölder's inequality,
    	\[
    	\sum_{x,y\in V}
    	|u(y,t)|^{q(p-1)}
    	|u(x,t)|^{r-1}
    	\omega_{xy}
    	\le
    	\left(
    	\sum_{y\in V}
    	|u(y,t)|^{rq(p-1)}
    	\mu(y)
    	\right)^{1/r}
    	\left(
    	\sum_{x\in V}
    	|u(x,t)|^r
    	\mu(x)
    	\right)^{1-\frac1r}.
    	\]
    	Since $rq(p-1)\ge r$ and $q(p-1)+r-1\ge r$, 
    	Remark \ref{rem:fk-measure-lower} implies that both terms are bounded uniformly in $t$.
    	Therefore
    	\[
    	|I_2|
    	\le
    	\frac{C\tau}{R_2-R_1},
    	\]
    	where $C$ is independent of $\tau$, $R_1$, and $R_2$.
    	
    	Combining the estimates gives
    	\[
    	\sum_{x\in V}
    	\zeta(x)(u(x,\tau)-h)_+^r\mu(x)
    	\le
    	\frac{C\tau}{R_2-R_1}
    	+
    	\sum_{x\in V}
    	\zeta(x)(u_0(x)-h)_+^r\mu(x).
    	\]
    	Letting $R_2\to+\infty$ we deduce	
    	\[
    	\sum_{x\in B_{R_1}(x_0)}
    	(u(x,\tau)-h)_+^r\mu(x)
    	\le
    	\sum_{x\in V}
    	(u_0(x)-h)_+^r\mu(x).
    	\]
    	Thus, also letting $R_1\to+\infty$ yields,	
    	\[
    	\sum_{x\in V}
    	(u(x,\tau)-h)_+^r\mu(x)
    	\le
    	\sum_{x\in V}
    	(u_0(x)-h)_+^r\mu(x),
    	\]
    	which implies that if $u_0\le h$, then
    	$u(x,\tau)\le h$.
    	
    	Finally, we show that $u$ cannot attain $h$.
    	If there exist $x_0\in V$ and $t_0\in(0,T)$ such that $u(x_0,t_0)=h$ then necessarily $\frac{\partial u}{\partial t}(x_0,t_0)=0$.	
    	Hence, 
    	\[
    	\sum_{y\sim x_0}
    	|\nabla_{xy}(u^q)|^{p-2}
    	\nabla_{xy}(u^q)
    	\frac{\omega_{x_0y}}{\mu(x_0)}
    	=
    	0.
    	\]
    	Since $h$ is the maximum value, we conclude that $u(y,t_0)=h$  for all $y\sim x_0$.
    	By the connectedness of $G$, we obtain $u(\cdot,t_0)\equiv h>0$, which contradicts that $u(\cdot,t_0)\in\ell^r(V)$.
    	Therefore, we conclude $u(x,t)<h$ for all $x\in V$ and $t\in(0,T)$.
    \end{proof}

    \begin{remark}\label{rem:nonnegative-alternative}
         If $u_0(x)\ge 0$, setting $h_n=-\frac{1}{n}$ in \cref{thm: nonneg} and passing to the limit as $n\to\infty$ yields that $u(x,t)\ge 0$. This shows that solutions to \eqref{eq:infinite-parabolic} preserve nonnegativity. Furthermore, if there exists some vertex $x_0\in V$ and $t_0>0$ such that $u(x_0,t_0)=0$, then arguing exactly as in the final paragraph of the preceding proof, we conclude that $u(\cdot,t_0)\equiv 0$. It follows from the above theorem that $u(x,t)=0$ for all $x\in V$ and $t\geq t_0$. It is notable that this demonstrates that the finite propagation speed phenomenon, which occurs for solutions to the Leibenson equation on Riemannian manifolds when $q(p-1)>1$, is absent in the graph setting.
    \end{remark}
    
    We now prove the comparison principle for solutions of \eqref{eq:infinite-parabolic} in the class $L^\infty([0,T);\ell^r(V))$, where we assume that $q(p-1)\ge r\geq1$.
    
    \begin{proof}[Proof of \cref{thm:lr-comparison}]
    Fix $x_0\in V$. For a given $R\ge 2$, define the cutoff
	function $\zeta_{R}(x)=\zeta_{R,2R}(x)$, where $\zeta_{R_1,R_2}$ is defined as in \eqref{cutoff}. Then $0\le \zeta_R\le 1$, $\operatorname{supp}\zeta_R\subset B_{2 R}(x_0)$, and
    \begin{equation}\label{eq:cutoff-R}
        |\nabla_{xy}\zeta_R(x)|\le \frac1R
        \qquad\text{whenever }x\sim y.
    \end{equation}
    Since $\inf_{x\in V} \mu(x)>0$ and $u,v\in L^\infty([0,T);\ell^r(V))$, there exists $M>0$ such that
    \begin{equation}\label{eq:pointwise-bound}
        |u(x,t)|+|v(x,t)|\le M
        \qquad\text{for all }(x,t)\in V\times[0,T).
    \end{equation}
    Let $w=u-v$ and define
    $$
        F_R(t)=\sum_{x\in V}\zeta_R(x)w_+(x,t)\mu(x).
    $$
    Since the summand is finite, $F_R$ is absolutely continuous and
    $$
        F_R'(t)
        =
        \sum_{x\in V}\zeta_R(x)
        \bigl(\Delta_p u^q(x,t)-\Delta_p v^q(x,t)\bigr)
        \mathbf{1}_{\{w(\cdot,t)>0\}}(x)\mu(x) \quad\text{for a.e. } t
    $$
    Applying \cref{lem:Green-formula}, we obtain
    $$
        F_R'(t)=-\frac12\bigl(I_{1,R}(t)+I_{2,R}(t)\bigr),
    $$
    where
    $$
    \begin{aligned}
    I_{1,R}(t)
    &:=\sum_{x,y\in V}
    \bigl((\nabla_{xy}u^q(x,t))^{p-1}-(\nabla_{xy} v^q (x,t))^{p-1}\bigr)
    \zeta_R(y)\nabla_{xy}\mathbf{1}_{\{w(\cdot,t)>0\}}(x)\omega_{xy},\\
    I_{2,R}(t)
    &:=\sum_{x,y\in V}
    \bigl((\nabla_{xy}u^q(x,t))^{p-1}-(\nabla_{xy} v^q (x,t))^{p-1}\bigr)
    \nabla_{xy}\zeta_R(x)\mathbf{1}_{\{w(\cdot,t)>0\}}(x)\omega_{xy}.
    \end{aligned}
    $$
    
    First we show that each summand in $I_{1,R}(t)$ is nonnegative and therefore $I_{1,R}(t)\ge 0$. If $\nabla_{xy}\mathbf{1}_{\{w(\cdot,t)>0\}}(x)=0$, there is nothing to prove. Now we assume that $\nabla_{xy}\mathbf{1}_{\{w(\cdot,t)>0\}}(x)>0$. It follows that $w(y,t)>0$ and $w(x,t)\leq0$. Then, we have
    $$
        (u(y,t))^q-(v(y,t))^q>0\geq  (u(x,t))^q-(v(x,t))^q,
    $$
    which implies that
    $$
        \nabla_{xy}u^q(x,t)-\nabla_{xy} v^q (x,t)\ge 0.
    $$
    Since the function $s\mapsto s^{p-1}$ is increasing, we see that
    the corresponding summand in $I_{1,R}(t)$ is
    nonnegative. The case where $\nabla_{xy}\mathbf{1}_{\{w(\cdot,t)>0\}}(x)<0$ follows similarly.
    
    Now we estimate the term $I_{2,R}(t)$. Set $\alpha=q(p-1)$. By \eqref{eq:cutoff-R},
    $$
    \begin{aligned}
    |I_{2,R}(t)|
    &\le \frac1R
    \sum_{x,y\in B_{2R+1}(x_0)}
    \left|(\nabla_{xy}u^q(x,t))^{p-1}-(\nabla_{xy} v^q (x,t))^{p-1}\right|\omega_{xy}\\
    &\le \frac{C_p}{R}
    \sum_{x,y\in B_{2R+1}(x_0)}
    \bigl(|u(y,t)|^\alpha+|u(x,t)|^\alpha
          +|v(y,t)|^\alpha+|v(x,t)|^\alpha\bigr)\omega_{xy}\\
    &\le \frac{2C_p}{R}
    \sum_{x\in V}\bigl(|u(x,t)|^\alpha+|v(x,t)|^\alpha\bigr)\mu(x).
    \end{aligned}
    $$
    Since $\alpha\ge r$, from \eqref{eq:pointwise-bound} we obtain
    $$
        \sum_{x\in V}|u(x,t)|^\alpha\mu(x)
        \le M^{\alpha-r}\sum_{x\in V}|u(x,t)|^r\mu(x)
    $$
    and the analogous inequality holds for $v$. Therefore, there exists a constant $C$ independent of $R$
    and $t$ such that
    \begin{equation}\label{eq:i2-lr}
        |I_{2,R}(t)|\le \frac{C}{R}.
    \end{equation}
    
    Combining $I_{1,R}(t)\ge0$ and \eqref{eq:i2-lr}, we obtain
    $$
        F_R'(t)\le \frac{C}{2R}
        \qquad\text{for a.e. }t\in(0,T).
    $$
    Integrating over $(0,\tau)$ gives
    $$
        \sum_{x\in V}\zeta_R(x)(u(x,\tau)-v(x,\tau))_+\mu(x)
        \le
        \sum_{x\in V}\zeta_R(x)(u(x,0)-v(x,0))_+\mu(x)
        +\frac{C\tau}{2R}.
    $$
    Since $ u(x,0)\le v(x,0)$ for every $x\in V$, the first term on the right hand side is zero. Letting
    $R\to\infty$ and applying monotone convergence yields
    $$
        \sum_{x\in V}(u(x,\tau)-v(x,\tau))_+\mu(x)=0.
    $$
    Hence, $u(x,\tau)\le v(x,\tau)$ for every $x\in V$ and $0\le \tau<T$.
    \end{proof}

    \subsection{Conservation of mass}\label{subsec:conservation-of-mass}

    \begin{theorem}\label{thm:mass-conservation}
		Assume that $q(p-1)\geq 1$. Let $u\in L^{\infty}([0,T);\ell^r(V))$, $r\geq1$, be a solution of \eqref{eq:infinite-parabolic} with initial value $u_0\in \ell^1(V)$. Then $u\in L^{\infty}([0,T);\ell^1(V))$ and we have, for all $t\in[0,T)$,
		\begin{equation}\label{l1contraction}
		\sum_{x\in V}u(x,t)\mu(x)= \sum_{x\in V}u_0(x)\mu(x).
		\end{equation}
        Moreover, for each solution $u\in L^{\infty}([0,T);\ell^r(V))$, $r>1$ of \eqref{eq:infinite-parabolic}, we have
        \begin{equation}\label{lrcontratcion}||u(\cdot, t)||_{\ell^{r}(V)}\leq ||u_0||_{\ell^{r}(V)},\qquad\forall t\in[0,T).
        \end{equation}
	\end{theorem}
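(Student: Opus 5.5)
We will use the cutoff method from the proofs of \cref{thm: nonneg} and \cref{thm:lr-comparison}, with three successive choices of test function.

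\emph{Step 1: pointwise bound.} Since $\mu_0>0$ and $u\in L^\infty([0,T);\ell^r(V))$, \cref{rem:fk-measure-lower} gives $M$ with $|u(x,t)|\le M$ for all $(x,t)$.

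\emph{Step 2: the $\ell^1$ bound.} Fix $x_0$ and $\zeta_R=\zeta_{R,2R}$ as in \eqref{cutoff}. Consider
$$F_R(t)=\sum_x\zeta_R(x)\,u_+(x,t)\mu(x).$$
As in the proof of \cref{thm:lr-comparison} (take $v\equiv0$, which is a solution), we get for a.e. $t$
$$F_R'(t)=-\tfrac12\bigl(I_{1,R}+I_{2,R}\bigr),$$
with $I_{1,R}\ge0$ because $s\mapsto s^{p-1}$ is monotone. For the boundary term,
$$|I_{2,R}|\le \frac{C}{R}\sum_{x\in B_{2R+1}}|u|^{q(p-1)}\mu(x).$$
The main obstacle is that $u$ is only known to lie in $\ell^r$ with possibly $r>q(p-1)$, so this sum is not obviously finite uniformly in $R$. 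Our first attempt will be a bootstrap. If $r\le q(p-1)$, the sum is bounded by $M^{q(p-1)-r}\|u\|_{\ell^r}^r$, hence $|I_{2,R}|\le C/R$. Letting $R\to\infty$ then gives $\|u_+(t)\|_{\ell^1}\le\|(u_0)_+\|_{\ell^1}$, and likewise for $u_-$, so $u\in L^\infty([0,T);\ell^1)$. If $r>q(p-1)\ge1$, we first lower the integrability index. We will run the same estimate with the test function $\zeta_R\,u_+^{s-1}$, where $s\in[1,r)$ and $u_0\in\ell^1\subset\ell^s$. The boundary term is then bounded by
$$\frac{C}{R}\sum_{B_{2R+1}}|u|^{q(p-1)+s-1}\mu.$$
If this bound cannot be closed directly, we fall back on the crude volume bound
$$\frac{C}{R}\,\mu_V(B_{2R+1})\,M^{q(p-1)+s-1},$$
keep the good term $I_1$, and choose $R$ depending on $t$. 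If the bootstrap fails, we will instead compare $u$ with the solution from \cref{thm:existence-lr} taken in a space with $r$ close to $1$; uniqueness follows from \cref{thm:lr-comparison} once $r\le q(p-1)$. This reduction of $r$ is the step we expect to be hardest.

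\emph{Step 3: mass conservation.} With $u\in L^\infty([0,T);\ell^1)$ and $q(p-1)\ge1$, the sum $\sum|u|^{q(p-1)}\mu$ is at most $M^{q(p-1)-1}\|u\|_{\ell^1}$. Now test with $\zeta_R$ itself. Since $\nabla\zeta_R\equiv0$ in the interior, $I_1$ disappears and we get
$$\Bigl|\sum\zeta_R u(t)\mu-\sum\zeta_R u_0\mu\Bigr|\le \frac{Ct}{R}.$$
Letting $R\to\infty$ by dominated convergence yields \eqref{l1contraction}.

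\emph{Step 4: the $\ell^r$ contraction \eqref{lrcontratcion}.} Test with $\zeta_R\,u_+^{r-1}$, as in the proof of \cref{thm: nonneg} with $h=0$. The good term is nonnegative. The bad term is controlled by the same Hölder bound used there, using $rq(p-1)\ge r$ and $q(p-1)+r-1\ge r$; this needs only $q(p-1)\ge1$. This gives $\|u_+(t)\|_{\ell^r}\le\|(u_0)_+\|_{\ell^r}$. The analogous bound for $u_-$, applied to $-u$, together with $|u|^r=u_+^r+u_-^r$, yields \eqref{lrcontratcion}.
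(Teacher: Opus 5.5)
There is a genuine gap in Step 2 in the case $r>q(p-1)$, which is the case you flag as hardest. Steps 3 and 4 are correct and agree with the paper; Step 4 uses the Hölder bound from \cref{thm: nonneg} with $h=0$.

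None of your fallbacks closes Step 2.
\begin{itemize}
\item The crude bound $\frac{C}{R}\mu_V(B_{2R+1})M^{q(p-1)+s-1}$ does not tend to zero. On an infinite connected graph, $\mu_V(B_{2R+1})\ge \mu_0(2R+2)$. Choosing $R$ depending on $t$ cannot help, because for fixed $t$ the error must vanish as $R\to\infty$.
\item The comparison route is circular. Applying \cref{thm:lr-comparison} in $\ell^{r'}$ with $r'\le q(p-1)$ requires $u$ itself to lie in $L^\infty([0,T);\ell^{r'}(V))$, which is what you are trying to prove. Moreover, for $q(p-1)=1$ you would need $r'=1$, which \cref{thm:existence-lr} does not provide.
\item The bootstrap with $\zeta_R u_+^{s-1}$ can be closed if you commit to $s=r-(q(p-1)-1)$. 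Then $q(p-1)+s-1=r$, and Young's inequality bounds the boundary term by $\frac{C}{R}\sum_x|u|^r\mu$. Iterating this lowers the integrability index by $q(p-1)-1$ per step. So it reaches $s\le q(p-1)$ in finitely many steps when $q(p-1)>1$. It stalls completely in the borderline case $q(p-1)=1$, $r>1$, which the theorem includes.
\end{itemize}

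The missing idea is to truncate at a positive level $h$ rather than at $0$. The test function then lives on a set of finite measure that does not depend on $R$. For instance, take
\[
F_{R,h}(t)=\sum_{x\in V}\zeta_R(x)\,(u(x,t)-h)_+\,\mu(x).
\]
The good term is still nonnegative. The boundary term now carries the factor $\mathbf{1}_{\{u(\cdot,t)>h\}}$, so by Chebyshev it is bounded by
\[
\frac{C}{R}\,M^{q(p-1)}\,\mu_V\bigl(\{u(\cdot,t)>h\}\bigr)\le \frac{C}{R}\,M^{q(p-1)}\,h^{-r}\sup_{0\le t<T}\|u(t)\|_{\ell^r(V)}^r .
\]
This tends to zero as $R\to\infty$ for every $r\ge1$, with no relation between $r$ and $q(p-1)$ needed. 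Letting $R\to\infty$ and then $h\downarrow0$ by monotone convergence gives $\|u_+(t)\|_{\ell^1(V)}\le\|(u_0)_+\|_{\ell^1(V)}$. Applying this to $-u$ gives the bound for $u_-$. The paper does the same thing with the bounded, nondecreasing test function $\frac{(u-h)_+}{u+\varepsilon}$, letting $\varepsilon,h\to0$ at the end.
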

	\begin{proof}
		 We first prove that $\|u(\cdot,t)\|_{\ell^1(V)}\leq \|u_0\|_{\ell^1(V)}$, which implies that $u\in L^{\infty}([0,T);\ell^{1}(V))$.
		
		Fix $R_2\geq R_1+1>1$ and $h>0$. Let $\zeta(x)$ be the cutoff function $\zeta_{R_1,R_2}(x)$ defined in \eqref{cutoff}. Multiply both sides of \eqref{eq:infinite-parabolic} by $\frac{(u(x,t)-h)_{+}}{u(x,t)+\varepsilon}\zeta(x)\mu(x)$, similar to \cref{thm: nonneg}, sum over $x$ and then integrate over $t$, we have
		\begin{equation}\label{eq:J1J2}
		    \begin{aligned}
			&\int_{0}^{\tau}\sum_{x\in V} \zeta(x)\mu(x)\frac{(u(x,t)-h)_{+}}{u(x,t)+\varepsilon}\frac{\partial u}{\partial t}(x,t)\ { d} t\\
			=&-\frac{1}{2}\int_{0}^{\tau}\sum_{x,y\in V}\left(\nabla_{xy}(u^q(\cdot,t))\right)^{p-1} \nabla_{xy}(\zeta(x))\frac{(u(y,t)-h)_{+}}{u(y,t)+\varepsilon}\omega_{xy}\ { d} t\\
			&\quad\quad-\frac{1}{2}\int_{0}^{\tau}\sum_{x,y\in V}\left(\nabla_{xy}(u^q(\cdot,t))\right)^{p-1} \zeta(x)\nabla_{xy}\left(\frac{(u(x,t)-h)_{+}}{u(x,t)+\varepsilon}\right)\omega_{xy}\ { d} t.
		\end{aligned}
		\end{equation}
		Let $s=u(x,t)$. By a change of variable, we obtain from \eqref{eq:J1J2} the following equality:
		\begin{equation}\label{eq:mass-test-identity}
			\sum_{x\in V}\int_{0}^{u(x,\tau)}\frac{(s-h)_{+}}{s+\varepsilon}\zeta(x)\mu(x)\ { d} s-\sum_{x\in V}\int_{0}^{u_0(x)}\frac{(s-h)_{+}}{s+\varepsilon}\zeta(x)\mu(x)\ { d} s= -\frac{1}{2}(J_1+J_2),
		\end{equation}
        where
		\begin{align*}
			J_1&=\int_{0}^{\tau}\sum_{x,y\in V}\left(\nabla_{xy}(u^q(\cdot,t))\right)^{p-1} \nabla_{xy}(\zeta(x))\frac{(u(y,t)-h)_{+}}{u(y,t)+\varepsilon}\omega_{xy}\ { d} t,\\
			J_2&=\int_{0}^{\tau}\sum_{x,y\in V}\left(\nabla_{xy}(u^q(\cdot,t))\right)^{p-1} \zeta(x)\nabla_{xy}\left(\frac{(u(x,t)-h)_{+}}{u(x,t)+\varepsilon}\right)\omega_{xy}\ { d} t.
		\end{align*}
        
        Let $M_r := \sup_{0\leq t<T} \|u(t)\|_{\ell^r(V)}$. Since we assume that $\mu_0=\inf_{x\in V}\mu(x)>0$, we have
        $$
        |u(x,t)| \le \mu_0^{-1/r} M_r ,\quad\forall (x,t)\in V\times[0,T).
        $$
        It follows that there exists constant $C,C'>0$ such that
        $$
        |\nabla_{xy}(u^q)|^{p-1}
        \le C \bigl( |u(x)|^{q(p-1)} + |u(y)|^{q(p-1)} \bigr)
        \le C' M_r^{q(p-1)}.
        $$
        For fixed $h>0$, define
        $
        E_h(t) = \{ y\in V : u(y,t) > h \}.
        $
        Then
        $$
        \mu_V(E_h(t)) \le h^{-r} \|u(t)\|_{\ell^r(V)}^r \le h^{-r} M_r^r .
        $$
        Therefore, for $J_1$, we have
        $$
        \begin{aligned}
        |J_1|
        \le \frac{1}{R_2-R_1} \int_0^\tau \sum_{x,y\in V} |\nabla_{xy}(u^q)|^{p-1} {\mathbf{1}}_{E_h(t)}(y) \,\omega_{xy}\,dt 
        &\le \frac{C' M_r^{q(p-1)}}{R_2-R_1} \int_0^\tau \sum_{y\in E_h(t)} \mu(y)\,dt \\
        &\le \frac{C' M_r^{r+{q(p-1)}}\tau}{h^r(R_2-R_1)} ,
        \end{aligned}
        $$
        For $J_2$, we have $J_2\geq 0$ as $\nabla_{xy}u^q\nabla_{xy}\left(\frac{(u(x,t)-h)_{+}}{u(x,t)+\varepsilon}\right)\geq 0$.
		
		Notice that $\frac{(s-h)_{+}}{s+\varepsilon}=0$ when $s<0$. From the above estimates and \eqref{eq:mass-test-identity} we can obtain that
		$$
		\begin{aligned}
			\sum_{x\in B_{R_1}(x_0)}\int_{0}^{u(x,\tau)_{+}}\frac{(s-h)_{+}}{s+\varepsilon}\mu(x)\ { d} s&\leq\sum_{x\in V}\int_{0}^{u(x,\tau)_{+}}\frac{(s-h)_{+}}{s+\varepsilon}\zeta(x)\mu(x)\ { d} s\\
			&\leq\sum_{x\in V}\int_{0}^{u_0(x)_{+}}\frac{(s-h)_{+}}{s+\varepsilon}\zeta(x)\mu(x)\ { d} s -\frac{J_1}{2}\\
			&\leq \sum_{x\in V}u_0(x)_{+}\mu(x)-\frac{J_1}{2}.
		\end{aligned}
		$$
		Now let $R_2$ tend to $+\infty$, we have
		$$
		\begin{aligned}
			\sum_{x\in B_{R_1}(x_0)}\int_{0}^{u(x,\tau)_{+}}\frac{(s-h)_{+}}{s+\varepsilon}\mu(x)\ { d} s \leq \sum_{x\in V}u_0(x)_{+}\mu(x).
		\end{aligned}
		$$
		Then letting $R_1$ tend to $+\infty$, we can obtain that
		$$\sum_{x\in V}\int_{0}^{u(x,\tau)_{+}}\frac{(s-h)_{+}}{s+\varepsilon}\mu(x)\ { d} s \leq \sum_{x\in V}u_0(x)_{+}\mu(x).
		$$
		Finally, let $\varepsilon$ and $h$ tend to $0$, using the monotone convergence theorem, we can deduce that for all $\tau\in(0,T)$,
		$$
		\sum_{x\in V}u(x,\tau)_{+}\mu(x)\leq \sum_{x\in V}u_0(x)_{+}\mu(x).
		$$ 
		Since $-u$ is also a solution of \eqref{eq:infinite-parabolic}, we also have
		$$
		\sum_{x\in V}u(x,\tau)_{-}\mu(x)\leq \sum_{x\in V}u_0(x)_{-}\mu(x).
		$$
		It follows that $\Vert u(\cdot,t)\Vert_{\ell^1(V)}\leq \Vert u_0\Vert_{\ell^1(V)}$. 
        Multiplying both sides of \eqref{eq:infinite-parabolic} by $u^{r-1}\zeta(x)\mu(x)$ and using similar arguments as above, (\ref{lrcontratcion}) follows as well.
		
		 Now we prove that $\sum_{x\in V}u(x,t)\mu(x)= \sum_{x\in V}u_0(x)\mu(x)$.
		
		Multiply both sides of the equation \eqref{eq:infinite-parabolic} by $\zeta(x)\mu(x)$. After summing over $x$ and integrating over $t$, we obtain that
		$$
		\begin{aligned}
			&\sum_{x\in V}\zeta(x)u(x,\tau)\mu(x)-\sum_{x\in V}\zeta(x)u_0(x)\mu(x)\\
			\geq&-\frac{1}{2}\int_{0}^{\tau}\sum_{x,y\in V}|\nabla_{xy}(u^q(\cdot,t))|^{p-1} |\nabla_{xy}(\zeta(x))|\omega_{xy}\ { d} t\\
			\geq&-\frac{C}{2(R_2-R_1)} \int_{0}^{\tau}\sum_{x,y\in V}(|u(y,t)|^{q(p-1)}+|u(x,t)|^{q(p-1)})\omega_{xy}\ { d} t\\
			\geq&-\frac{C\tau}{2(R_2-R_1)\mu_0^{q(p-1)-1}} \Vert u_0\Vert_{\ell^1(V)}^{q(p-1)},
		\end{aligned}
		$$
		where the last inequality follows from the fact that
		$$
		|u(x,t)|\leq\frac{1}{\mu_0}\|u(\cdot,t)\|_{\ell^1(V)}\leq\frac{1}{\mu_0}\|u_0\|_{\ell^1(V)}.
		$$
        
		Now let $R_2,R_1$ tend to $+\infty$ successively. By the dominated convergence theorem, we obtain that
		$$
		\sum_{x\in V}u(x,\tau)\mu(x)\geq \sum_{x\in V}u_0(x)\mu(x).
		$$
		Applying the similar argument to the solution $-u$ yields 
        $$
        -\sum_{x\in V}u(x,\tau)\mu(x)\geq -\sum_{x\in V}u_0(x)\mu(x).
        $$
        Thus, for all $\tau\in(0,T)$,
		$$
		\sum_{x\in V}u(x,\tau)\mu(x)= \sum_{x\in V}u_0(x)\mu(x),
		$$ 
        which completes the proof of (\ref{l1contraction}).
	\end{proof}
    When considering exhaustion solutions, the conditions on the parameters $p$ and $q$ in \cref{thm:mass-conservation} can be further weakened under the assumption of polynomial growth of the underlying graph. We begin by establishing several necessary lemmas.
    \begin{lemma}\label{lem:power-difference}
        Let $p \geq 1$, $q > 0$, and let $m' = 1 + \frac{\varepsilon}{q}$ for some $\varepsilon \in (0,1)$. Set $s = \frac{m'+p-2}{p}$. Then, for all $a > 0$ and $b > 0$, we have
        \[
            |a \pm b|^{p-1} \left| a^{m'-1} \pm b^{m'-1} \right| \geq c_{p,q} \varepsilon \left| a^s \pm b^s \right|^p,
        \]
        where $c_{p,q} > 0$ is a constant depending only on $p$ and $q$.
    \end{lemma}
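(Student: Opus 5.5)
The inequality is homogeneous of degree $ps$, so I will reduce it to a one-variable inequality and treat the two sign choices separately. Put $\delta:=m'-1=\varepsilon/q\in(0,1/q)$. Then $ps=p-1+\delta$. Hence both sides of the inequality in \cref{lem:power-difference} are homogeneous of degree $p-1+\delta$ in $(a,b)$: the left side has degree $(p-1)+\delta$ and the right side has degree $ps$. The expressions are symmetric in $a,b$ up to sign, and only absolute values appear, so I may assume $a\ge b>0$. Dividing by $b^{p-1+\delta}$ and setting $t=a/b\ge1$, it suffices to show
$$
|t\pm1|^{p-1}\,|t^{\delta}\pm1|\;\ge\;c_{p,q}\,\varepsilon\,|t^{s}\pm1|^{p},\qquad t\ge1.
$$
Throughout I will use that $s\le (p-1+1/q)/p=:s_*$, which depends only on $p,q$.

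\emph{Plus sign.} This case is elementary. Since $t\ge1$, the left side is at least $t^{p-1}\,t^{\delta}=t^{ps}$, and the right side satisfies $(t^s+1)^p\le 2^pt^{ps}$. Thus the inequality holds with constant $2^{-p}$, and since $\varepsilon<1$ it also holds with constant $2^{-p}\varepsilon$.

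\emph{Minus sign.} Here the factor $\varepsilon$ is genuinely needed, and I will split into $t\in[1,2]$ and $t\ge2$.

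For $t\in[1,2]$ I will use integral representations. On one side,
$$t^\delta-1=\delta\int_1^t x^{\delta-1}\,dx\ge \delta(t-1)t^{\delta-1}\ge \delta 2^{\delta-1}(t-1)\ge \tfrac{\delta}{2}(t-1).$$
On the other side,
$$t^s-1=s\int_1^t x^{s-1}\,dx\le s(t-1)\max(1,2^{s-1})\le C(p,q)(t-1),$$
using $s\le s_*$. Hence the left side is at least $\tfrac{\delta}{2}(t-1)^p$ and the right side is at most $C^p(t-1)^p$. Their ratio is therefore at least $\delta/(2C^p)=\varepsilon/(2qC^p)$.

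For $t\ge2$ I will use three bounds:
\begin{itemize}
\item $(t-1)^{p-1}\ge 2^{1-p}t^{p-1}$;
\item $t^\delta-1=t^\delta(1-t^{-\delta})\ge t^\delta(1-2^{-\delta})\ge c\,\delta\,t^\delta$, by concavity of $\delta\mapsto1-2^{-\delta}$ on $[0,1/q]$, with $c$ depending only on $q$;
\item $(t^s-1)^p\le t^{ps}=t^{p-1+\delta}$.
\end{itemize}
Together these give a ratio of at least $2^{1-p}c\,\delta=2^{1-p}c\,\varepsilon/q$.

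Taking $c_{p,q}$ to be the minimum of the three constants obtained in the cases above finishes the proof.

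The main obstacle is uniformity in the minus case. A single mean value bound on all of $[1,\infty)$ fails when $s<1$: it produces a factor $t^{\delta-1}$ that decays as $t\to\infty$. This is why the range of $t$ must be split. The second point to check is that the constant depends only on $p,q$ and not on $\varepsilon$. This holds because $s$ and $\delta$ stay in bounded ranges controlled by $p$ and $q$.
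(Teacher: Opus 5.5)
Your proof is correct, but in the minus case it takes a different route from the paper. You use homogeneity to reduce to $t=a/b\ge 1$. You then split into $t\in[1,2]$ and $t\ge 2$, with pointwise mean-value bounds near the diagonal and crude size bounds away from it. The paper does not split at all. It writes $a^s-b^s=s\int_b^a\xi^{s-1}\,d\xi$ and applies Jensen's (H\"older's) inequality to the average over $[b,a]$ \emph{before} raising to the power $p$. Since $p(s-1)=m'-2$ by the choice of $s$, this gives, for all $a>b>0$ at once,
\[
(a^s-b^s)^p\le s^p(a-b)^{p-1}\int_b^a\xi^{m'-2}\,d\xi=\frac{q\,s^p}{\varepsilon}\,(a-b)^{p-1}\bigl(a^{m'-1}-b^{m'-1}\bigr).
\]
So your remark that the range ``must be split'' holds only for the pointwise approach. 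Averaging first removes the decaying factor $t^{\delta-1}$ you were worried about.

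The paper's route is shorter. It gives the explicit constant $1/(q s^p)\ge 1/(q s_*^p)$, with the $\varepsilon$-dependence visibly coming only from $\int\xi^{m'-2}$. It is also the same H\"older device the paper reuses in Lemma \ref{monolem}. Your route is more elementary but yields messier constants. Your plus-sign argument is essentially the paper's.

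There is one small slip to fix. For $t\in[1,2]$ you write $\int_1^t x^{\delta-1}\,dx\ge (t-1)t^{\delta-1}$ and then $t^{\delta-1}\ge 2^{\delta-1}$. Both steps hold only for $\delta\le 1$, and $\delta=\varepsilon/q$ exceeds $1$ whenever $q<\varepsilon$. The conclusion $t^\delta-1\ge\frac{\delta}{2}(t-1)$ is still true, because $\min_{x\in[1,2]}x^{\delta-1}=\min\{1,2^{\delta-1}\}\ge\frac12$ for every $\delta>0$. State it that way and the case is complete.
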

    
    \begin{proof}
        We first prove the inequality for the minus sign. Without loss of generality, we assume $a > b$. By the fundamental theorem of calculus and H\"older's inequality, there exists a constant $C_{p,q} > 0$ such that
        \[
        \begin{aligned}
            \left( \frac{a^s - b^s}{a - b} \right)^p = \left( \frac{s}{a - b} \int_{b}^{a} \xi^{s-1} \, d\xi \right)^p 
            &\leq \frac{s^p}{a - b} \int_{b}^{a} \xi^{(s-1)p} \, d\xi \\
            &= \frac{s^p}{a - b} \int_{b}^{a} \xi^{m'-2} \, d\xi \\
            &= \frac{s^p}{m'-1} \frac{a^{m'-1} - b^{m'-1}}{a - b} \leq \frac{C_{p,q}}{\varepsilon} \frac{a^{m'-1} - b^{m'-1}}{a - b},
        \end{aligned}
        \]
        where in the last inequality we used the relation $m'-1 = \frac{\varepsilon}{q}$ and the fact that $s^p$ is uniformly bounded from above by a constant depending only on $p$ and $q$ for all $\varepsilon \in (0,1)$.
        
        Next, we consider the case with the plus sign. Since $p \geq 1$ and $a, b > 0$, we have $(a+b)^{p-1} \geq \max\{a^{p-1}, b^{p-1}\}$. Therefore,
        \[
            (a+b)^{p-1}\left(a^{m'-1} + b^{m'-1}\right) \geq a^{p-1} \cdot a^{m'-1} + b^{p-1} \cdot b^{m'-1} = a^{m'+p-2} + b^{m'+p-2}.
        \]
        Note that $sp = m'+p-2$. By applying the elementary inequality $(x+y)^p \leq 2^{p-1}(x^p+y^p)$ for $p \geq 1$ with $x = a^s$ and $y = b^s$, we obtain
        \[
            a^{m'+p-2} + b^{m'+p-2} = a^{sp} + b^{sp} \geq 2^{-p} \left(a^s + b^s\right)^p.
        \]
        Since $\varepsilon \in (0,1)$, the above inequalities implies that
        \[
            (a+b)^{p-1}\left(a^{m'-1} + b^{m'-1}\right) \geq 2^{-p} \left(a^s + b^s\right)^p \geq 2^{-p} \varepsilon \left(a^s + b^s\right)^p.
        \]
        The conclusion then follows by choosing $c_{p,q} := \min\left\{ C_{p,q}^{-1}, 2^{-p} \right\}$.
    \end{proof}

    \begin{lemma}\label{lem:beta-energy}
        Let $\varepsilon\in(0,1)$ and set
        $
          \beta = \frac{q(p-1)+\varepsilon}{p} > 0 .
        $
        If $u\in L^\infty([0,+\infty);\ell^1(V))$ is an exhaustion solution of \eqref{eq:infinite-parabolic} with initial data $u_0\in\ell^1(V)$, then for every $t\in[0,+\infty)$, we have
        \begin{equation}\label{eq:beta-energy}
          \int_0^t \sum_{x,y\in V}
          \left\lvert u(y,\tau)^{\beta} - u(x,\tau)^{\beta} \right\rvert^{p}
          \omega_{xy}\,d\tau
          \leq\frac{C}{\varepsilon},
        \end{equation}
        where $C>0$ is a constant depend only on $p,q,\mu_0$ and $u_0$.
    \end{lemma}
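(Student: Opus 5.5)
Since $u$ is an exhaustion solution, I will prove the estimate \eqref{eq:beta-energy} for each approximating solution $u_n$ of the mixed problem \eqref{eq:finite-ball-problem} on $B_n(x_0)$, with a constant independent of $n$, and then pass to the limit. The passage is harmless: for a fixed finite set of edges, the integrand converges pointwise along the subsequence $n_k$, and it is uniformly bounded. So Fatou's lemma, followed by exhausting $V$, yields \eqref{eq:beta-energy} for $u$.

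For the finite problem I test the equation with $\phi(u_n)=(u_n)^{m'-1}=|u_n|^{m'-2}u_n$, where $m'=1+\frac{\varepsilon}{q}$. Since $m'-1=\varepsilon/q\in(0,1/q)$, the function $\phi$ is continuous and odd. Let $\Phi(s)=\int_0^s\phi=\frac{1}{m'}|s|^{m'}$. Because $u_n$ vanishes outside $B_n$, the function $\phi(u_n)$ has finite support, so \cref{lem:Green-formula} applies. Integrating in time gives
\[
\frac1{m'}\sum_{x}|u_n(x,t)|^{m'}\mu(x)+\frac12\int_0^t\sum_{x,y}(\nabla_{xy}u_n^q)^{p-1}\nabla_{xy}\bigl(u_n^{m'-1}\bigr)\omega_{xy}\,d\tau=\frac1{m'}\sum_x|u_0|^{m'}\mu.
\]
One technical point: near zeros of $u_n$ the map $\phi$ is only Hölder, but $\Phi\in C^1$, so the chain rule $\frac{d}{dt}\Phi(u_n)=\phi(u_n)\partial_tu_n$ still holds. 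The right-hand side is controlled by \cref{rem:fk-measure-lower}, since $m'\ge1$:
\[
\sum|u_0|^{m'}\mu\le \mu_0^{-(m'-1)}\|u_0\|_{\ell^1}^{m'}.
\]
This is bounded uniformly for $\varepsilon\in(0,1)$ by a constant depending only on $q$, $\mu_0$ and $u_0$. Dropping the nonnegative first term on the left, I get
\[
\int_0^t\sum_{x,y}(\nabla_{xy}u_n^q)^{p-1}\nabla_{xy}\bigl(u_n^{m'-1}\bigr)\omega_{xy}\,d\tau\le C.
\]

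It remains to bound the integrand from below pointwise. Write $a=|u_n(y)|^q$ and $b=|u_n(x)|^q$, and consider the exponent $\frac{m'-1}{q}$ applied to them; I write $\tilde m-1:=\frac{m'-1}{q}=\frac{\varepsilon}{q^2}$. Then $u_n^{m'-1}(y)=\pm a^{\tilde m-1}$, with a minus sign appearing when $u_n(x)$ and $u_n(y)$ have the same sign (the difference case) and a plus sign when the signs are opposite. Either way the integrand equals
\[
|a\pm b|^{p-1}\,\bigl|a^{\tilde m-1}\pm b^{\tilde m-1}\bigr|.
\]
\Cref{lem:power-difference} is stated with $m'=1+\varepsilon/q$, so I apply it with $\varepsilon$ replaced by $\varepsilon/q$ when $q>1$, or simply note that its proof works for any $m'-1\in(0,1)$ with constant proportional to $m'-1$. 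This gives the lower bound
\[
c\,\varepsilon\,\bigl|a^{s}\pm b^{s}\bigr|^p,\qquad s=\frac{\tilde m+p-2}{p}.
\]
Unwinding, $a^s=|u_n(y)|^{qs}$, and $qs=\frac{q(p-1)+\varepsilon/q}{p}$. This exponent is not exactly $\beta$. To land exactly on $\beta=\frac{q(p-1)+\varepsilon}{p}$, I will instead choose the test function $u_n^{\varepsilon/q}$, i.e. $\phi(s)=s^{\varepsilon/q}$, so that in the variables $a,b$ the exponent is $m'-1=\varepsilon/q^2$... Rechecking the bookkeeping: what I need is $qs=\beta$. With $\phi=u^{\delta}$ the exponent in $a,b$ is $\delta/q$, and $qs=\frac{q(p-1)+\delta}{p}$, so $\delta=\varepsilon$ works. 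Thus the right test function is $u_n^{\varepsilon}=|u_n|^{\varepsilon-1}u_n$, which in $a,b$ is the power $a^{\varepsilon/q}$, exactly matching \cref{lem:power-difference} with $m'-1=\varepsilon/q$. The exponent $s$ there then satisfies $qs=\beta$, and $a^s\pm b^s=\pm\bigl(u_n(y)^\beta-u_n(x)^\beta\bigr)$ with the convention $k^s=|k|^{s-1}k$. Dividing by $c\,\varepsilon$ gives $C/\varepsilon$.

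The main obstacle is this sign and exponent bookkeeping, in particular confirming that \cref{lem:power-difference} applies with the lemma's requirement $p\ge1$ (which holds since $p>1$). A secondary point is the justification of the time derivative of $|u_n|^{1+\varepsilon}$, which uses $\Phi\in C^1$ and needs no further regularity.
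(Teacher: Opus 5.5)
Once you settle on the test function $u_n^{\varepsilon}$, your argument is correct and is essentially the paper's proof. You need to redo the displayed energy identity with $\frac{1}{1+\varepsilon}\sum_x|u_n|^{1+\varepsilon}\mu(x)$ and bound the right-hand side by $\mu_0^{-\varepsilon}\|u_0\|_{\ell^1(V)}^{1+\varepsilon}$. The rest matches the paper: apply \cref{lem:power-difference} with $a=u_n(y)^q$, $b=u_n(x)^q$, so that $q(m'-1)=\varepsilon$ and $qs=\beta$, then pass to the limit by Fatou's lemma. Your explicit sign cases and the $C^1$ chain-rule remark fill in details the paper leaves implicit.
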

    
    \begin{proof}
    Fix $x_0\in V$. Without loss of generality, we may assume that $u(x,t)=\lim_{n\to\infty} u_{n}(x,t)$ for all $(x,t)\in V\times[0,+\infty)$, where each $u_n:V\times[0,\infty)\to \mathbb{R}$ is a solution of the mixed problem \eqref{eq:finite-ball-problem}. For each $n\in\mathbb{N}$, we multiply both sides of the equation in \eqref{eq:finite-ball-problem} by $u_n^{\varepsilon}$ and sum over $x\in V$. It follows that
    \begin{equation}\label{eq:m-derivatice}
        \frac{d}{dt} \left(\frac{1}{1+\varepsilon} \sum_{x\in V} \left\lvert u_n(x,t) \right\rvert^{1+\varepsilon} \mu(x)\right)
          = - \frac{1}{2} \sum_{x,y\in V}
          \left( \nabla_{xy} (u_n^{q})(x,t) \right)^{p-1}
          \nabla_{xy} (u_n^{\varepsilon})(x,t) \omega_{xy}.
    \end{equation}
    Let $m'=1+\frac{\varepsilon}{q}$ and $s=\frac{m'+p-2}{p}$. Applying \cref{lem:power-difference} to
    $$
    a=(u_n(y,t))^{q}\quad\text{and}\quad b=(u_n(x,t))^q,
    $$
    we obtain
    $$
      \left( \nabla_{xy} (u_n^{q}) \right)^{p-1} \, \nabla_{xy} (u_n^{q(m'-1)})
      \ge c_{p,q}\varepsilon
      \left\lvert \nabla_{xy} (u_n^{qs}) \right\rvert^p.
    $$
    Since $q(m'-1)=\varepsilon$ and $qs=\beta$, it then follows from \eqref{eq:m-derivatice} that 
    $$
    \begin{aligned}
        \int_0^t \sum_{x,y\in V}
          \left\lvert \nabla_{xy} (u_n^{\beta}) \right\rvert^{p}
          \omega_{xy}\,d\tau
          &\leq\frac{1}{c_{p,q}\varepsilon} \int_0^t\sum_{x,y\in V} \left( \nabla_{xy} (u_n^{q}) \right)^{p-1} \, \nabla_{xy} (u_n^{\varepsilon})\omega_{xy}\,d\tau\\
          &=\frac{2}{(1+\varepsilon)c_{p,q}\varepsilon} \sum_{x\in V} \left(\lvert u_n(x,0) \rvert^{1+\varepsilon} -\lvert u_n(x,t) \rvert^{1+\varepsilon}\right)\mu(x)\\
          &\leq\frac{2}{c_{p,q}\varepsilon} \sum_{x\in V} \lvert u_0(x) \rvert^{1+\varepsilon} \mu(x)\leq\frac{2}{c_{p,q}\mu_0^{\varepsilon}\varepsilon} \|u_0\|_{\ell^1(V)}^{1+\varepsilon},
    \end{aligned}
   $$
   where the last inequality follows from \cref{rem:fk-measure-lower}. Finally, Fatou's lemma and the pointwise convergence $u_n\to u$ give \eqref{eq:beta-energy}. This completes the proof.
    \end{proof}

    \begin{lemma}\label{lem:power-lipschitz}
		For any real numbers $a,b$ and any $p\geq 1$, we have 
		$$
		|a^p-b^p|\leq p|a-b|(|a|^{p-1}+|b|^{p-1}).
		$$
	\end{lemma}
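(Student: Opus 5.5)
We will prove this with the mean value theorem applied to $f(s)=s^p=|s|^{p-1}s$, using the paper's convention for signed powers; interpreted this way the inequality covers arbitrary real $a,b$. The plan has three steps.

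First, we check that $f\in C^1(\mathbb{R})$ with $f'(s)=p|s|^{p-1}$. For $s\neq 0$ this is a direct computation: on $(0,\infty)$ we have $f(s)=s^p$, and $f$ is odd. At $s=0$ with $p>1$, the bound $|f(h)-f(0)|/|h|=|h|^{p-1}\to 0$ gives $f'(0)=0$, which agrees with $p|0|^{p-1}$. When $p=1$ we simply have $f(s)=s$ and $f'\equiv 1$, which we read as $p|s|^{0}=1$.

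Second, for $a\neq b$ the mean value theorem gives some $\xi$ between $a$ and $b$ with
$$
|a^p-b^p| = p|\xi|^{p-1}|a-b|.
$$
Since $\xi$ lies on the segment joining $a$ and $b$, we have $|\xi|\le\max\{|a|,|b|\}$. Because $p-1\ge 0$, the map $t\mapsto t^{p-1}$ is nondecreasing on $[0,\infty)$, so
$$
|\xi|^{p-1}\le \max\{|a|^{p-1},|b|^{p-1}\}\le |a|^{p-1}+|b|^{p-1}.
$$
When $p=1$ both sides of this chain equal $1$ and $2$ respectively, so it still holds. The case $a=b$ is trivial.

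Third, combining the two displays gives the claim. The only point needing care is differentiability of the signed power at the origin, handled in the first step. The argument has no real obstacle: it avoids any case split on the signs of $a$ and $b$, since $f$ is monotone and $C^1$ on all of $\mathbb{R}$.
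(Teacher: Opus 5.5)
Your proof is correct and is essentially the paper's argument: both rest on the mean value theorem giving $|a^p-b^p|=p|\xi|^{p-1}|a-b|$ with $|\xi|\le\max\{|a|,|b|\}$. The only difference is that the paper verifies the case $ab\le 0$ directly and applies the mean value theorem only on $(0,\infty)$, whereas you avoid the sign case split by checking that the signed power $|s|^{p-1}s$ is differentiable at $0$, which is legitimate and slightly tidier.
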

	\begin{proof}
		When $ab\leq 0$, the inequality is easily verified. When $ab>0$, without loss of generality, we may assume both of them are positive. Then by the mean value theorem, there exists a number $\eta$ between $a$ and $b$ such that 
        $$
        |a^p-b^p|=p|a-b||\eta|^{p-1}\leq p|a-b|(|a|^{p-1}+|b|^{p-1}).
        $$
	\end{proof}

    \begin{corollary}\label{cor:p-q-beta-inequality}
         Suppose $p\geq1$ and $0 < \beta \le q$. Then there exists a constant $C = C(p,q,\beta)$ such that for all $a,b \in \mathbb{R}$,
        \begin{equation}\label{eq:chain}
          \left| a^{q} - b^{q} \right|^{p-1}
          \le
          C\bigl( \left| a \right|^{(q-\beta)(p-1)}
          + \left| b \right|^{(q-\beta)(p-1)} \bigr)
          \left| a^{\beta} - b^{\beta} \right|^{p-1}.
        \end{equation}
    \end{corollary}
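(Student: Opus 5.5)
The plan is to reduce \eqref{eq:chain} to an inequality between $|a^q-b^q|$ and $|a^\beta-b^\beta|$ with no exponent $p-1$, namely
\begin{equation*}
|a^q-b^q|\le C'\bigl(|a|^{q-\beta}+|b|^{q-\beta}\bigr)\,|a^\beta-b^\beta|,
\end{equation*}
where the powers are the signed powers $k^s=|k|^{s-1}k$ used throughout the paper. Since $p\ge1$, raising this to the power $p-1\ge0$ and applying $(x+y)^{p-1}\le \max\{1,2^{p-2}\}(x^{p-1}+y^{p-1})$ yields \eqref{eq:chain} with $C=\max\{1,2^{p-2}\}C'^{\,p-1}$. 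The case $p=1$ is trivial, since both sides are then constants with $C=1/2$ working.

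To prove the reduced inequality, set $A=a^\beta$ and $B=b^\beta$, so that $a^q=A^{q/\beta}$ and $b^q=B^{q/\beta}$ with signed powers. Since $\gamma:=q/\beta\ge1$, \cref{lem:power-lipschitz} applies to $A,B$ with exponent $\gamma$. It gives
\begin{equation*}
|a^q-b^q|=|A^\gamma-B^\gamma|\le \gamma|A-B|\bigl(|A|^{\gamma-1}+|B|^{\gamma-1}\bigr).
\end{equation*}
Here $|A|^{\gamma-1}=|a|^{\beta(\gamma-1)}=|a|^{q-\beta}$, and similarly for $B$. This is exactly the reduced inequality with $C'=q/\beta$.

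The main point to verify is that \cref{lem:power-lipschitz} is valid for signed powers and real (not necessarily integer) exponents $p\ge1$, which its proof covers: the case $ab\le0$ is checked directly, and the case $ab>0$ follows from the mean value theorem. One should also confirm that the map $a\mapsto a^\beta$ preserves signs, so that $AB\le0$ exactly when $ab\le0$. This holds, but the lemma is applied to $A,B$ directly anyway, so no case split is needed at this level. No real obstacle is expected; the only care needed is in tracking the constant's dependence on $p,q,\beta$.
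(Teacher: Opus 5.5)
Your proposal is correct and is essentially the paper's proof. The paper also applies \cref{lem:power-lipschitz} to $a^\beta, b^\beta$ with exponent $q/\beta\ge 1$, then raises to the power $p-1$ and splits the sum. The only difference is the constant in the last step: you use $\max\{1,2^{p-2}\}$, which is sharper than the paper's $2^{p-1}$; both are valid.
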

    \begin{proof}
        By \cref{lem:power-lipschitz}, we have
        $$
        |a^{q} - b^{q}|=\left|(a^{\beta})^{\frac q \beta}-(b^{\beta})^{\frac q \beta}\right|\leq \frac{q}{\beta}\bigl( \left| a \right|^{q-\beta}
          + \left| b \right|^{q-\beta} \bigr) |a^{\beta} - b^{\beta}|.
        $$
        The result then follows from the fact that
        $$
        (A+B)^{p-1} \le 2^{p-1} (A^{p-1} + B^{p-1})\quad \forall A,B\geq 0.
        $$
    \end{proof}


    \begin{proof}[Proof of \cref{thm:mass-conservation-exhaustion-solution}]
    Let $a=q(p-1)$. By assumption, we have $a\geq \frac{d-p}{d}$. In view of \cref{thm:mass-conservation}, we may assume that $a<1$. For all sufficiently large $R$, define
    \[
      \beta_R:=\frac{a+(\log R)^{-1}}{p},\quad r_R:=a-\frac{p-1}{\log R},\quad m_R:=1+\frac{1}{\log R}.
    \]
    Then $0<r_R<a<1$ and $0<\beta_R<q$. By construction, these parameters satisfy the identity
    \begin{equation}\label{eq:exponent-identity}
        p(q-\beta_R)(p-1) = a-\frac{p-1}{\log R} = r_R.
    \end{equation}
    Applying \cref{lem:beta-energy} with $\epsilon=(\log R)^{-1}$ gives
    \begin{equation}\label{eq:grad-beta-bound}
            \left( \int_0^t \sum_{x,y\in V} |\nabla_{xy}u^{\beta_R}|^p \omega_{xy} \, d\tau \right)^{\frac{p-1}{p}} \le C(\log R)^{\frac{p-1}{p}}.
    \end{equation}
        
        Fix a vertex $x_0 \in V$ and set $\rho(x) := 1+d_G(x,x_0)$. Let
        \[
            \eta(s) =
            \begin{cases}
                1, & s \le 1, \\
                2-s, & 1 < s < 2, \\
                0, & s \ge 2,
            \end{cases}
        \]
        and define the cut-off function $\zeta_R$ on $V$ by
        \[
            \zeta_R(x) := \eta\left( \frac{\log \rho(x)}{\log R} \right).
        \]
        Clearly, $0 \le \zeta_R \le 1$, $\zeta_R(x)=1$ when $\rho(x) \le R$, and $\zeta_R(x)=0$ when $\rho(x) \ge R^2$. Moreover, $\zeta_R(x) \to 1$ for every $x \in V$ as $R \to \infty$.
        Since $|\rho(x)-\rho(y)| \le 1$ whenever $x \sim y$, the mean value theorem implies that there exists a constant $C > 0$ independent of $R$ such that for any edge $\{x,y\} \in E$,
        \[
            |\nabla_{xy}\zeta_R| \le h_R(x), \quad \text{where} \quad h_R(x) := \frac{C}{\rho(x)\log R} \mathbf{1}_{\{R/2 \le \rho(x) \le 2R^2\}}.
        \]
        Now, setting $\sigma_R := \frac{p}{1-r_R}$, we compute
        \[
            1-r_R = 1-a+\frac{p-1}{\log R} \leq \frac{p}{d} + \frac{p-1}{\log R}.
        \]
        It follows that $\sigma_R\geq\frac{p}{p/d+(p-1)/\log R}$, and we have the uniform bound
        \begin{equation}\label{eq:d-sigma-diff}
             d-\sigma_R \leq\frac{d^2(p-1)}{p\log R+d(p-1)} \le \frac{C}{\log R}.
        \end{equation}
        
        We first show that there exists $C>0$ such that for all $R>2$
        \begin{equation}\label{eq:capacity-bound}
            \sum_{x\in V} h_R(x)^{\sigma_R} \mu(x) \le  C (\log R)^{1-\sigma_R}.
        \end{equation}
        Let $K_R := \lceil \log_2(4R) \rceil$, and define
        \[
            A_k := \{ x \in V : 2^{k-1}R \le \rho(x) < 2^k R \}, \quad k \in \{0, 1, \dots, K_R\}.
        \]
        Then, $\{A_k\}_{k=0}^{K_R}$ covers the support $\{x \in V : R/2 \le \rho(x) \le 2R^2\}$. Since 
        \[
        \rho(x)^{-\sigma_R} \le 2^{\sigma_R}(2^k R)^{-\sigma_R},\quad \forall  x \in A_k.
        \]
        the polynomial volume growth assumption $\mu_V(B_r) \le C r^d$ yields
        \[
            \sum_{x \in A_k} \rho(x)^{-\sigma_R} \mu(x) \le 2^{\sigma_R} (2^k R)^{-\sigma_R} \mu(A_k) \le C (2^k R)^{d-\sigma_R}.
        \]
        Notice that $2^k R \le 8R^2$ for all $0 \le k \le K_R$. In view of the upper bound on $d-\sigma_R$ from \eqref{eq:d-sigma-diff}, we have
        \[
            (2^k R)^{d-\sigma_R} \le (8R^2)^{\frac{C}{\log R}} \le C',
        \]
        where $C' > 0$ is a constant independent of $R$. Summing over all $k \in \{0, 1, \dots, K_R\}$ yields
        \[
            \sum_{R/2 \le \rho(x) \le 2R^2} \rho(x)^{-\sigma_R} \mu(x) \le \sum_{k=0}^{K_R} \sum_{x \in A_k} \rho(x)^{-\sigma_R} \mu(x) \le \sum_{k=0}^{K_R} C (2^k R)^{d-\sigma_R} \le C \log R.
        \]
        Thus, by definition we have
        \[
            \sum_{x\in V} h_R(x)^{\sigma_R} \mu(x) \le \frac{C}{(\log R)^{\sigma_R}} \sum_{R/2 \le \rho(x) \le 2R^2} \rho(x)^{-\sigma_R} \mu(x)\leq C (\log R)^{1-\sigma_R}.
        \]
        
        Next, set
        \[
            S_R(t) := \sum_{x\in V} u(x,t)\zeta_R(x) \mu(x).
        \]
        It follows from \cref{lem:Green-formula} that
        \begin{equation}\label{eq:green-formula}
            \begin{aligned}
                |S_R(t) - S_R(0)| &= \left| \int_0^t \sum_{x\in V} \frac{\partial u}{\partial \tau}(x,\tau)\zeta_R(x) \mu(x) \, d\tau \right| \\
                &\leq \int_0^t \sum_{x,y\in V} |\nabla_{xy} (u^{q})(x,\tau)|^{p-1} |\nabla_{xy}\zeta_R(x)| \omega_{xy} \, d\tau \\
                &\leq \int_0^t \sum_{x,y\in V} |\nabla_{xy} (u^{q})(x,\tau)|^{p-1} h_R(x) \omega_{xy} \, d\tau.
            \end{aligned}
        \end{equation}
        By \eqref{eq:exponent-identity} and \cref{cor:p-q-beta-inequality}, we obtain
        \begin{equation}\label{eq:chain-ineq}
            |\nabla_{xy}u^q|^{p-1} \le C \left( |u(x,\tau)|^{\frac{r_R}{p}} + |u(y,\tau)|^{\frac{r_R}{p}} \right) |\nabla_{xy}u^{\beta_R}|^{p-1},
        \end{equation}
        where the constant $C > 0$ can be chosen to be independent of $R$, since $\beta_R$ lies within a compact subinterval of $(0,q)$ for sufficiently large $R$.
        An application of H\"older's inequality to \eqref{eq:green-formula}, together with the estimate in \eqref{eq:grad-beta-bound}, yields
        \begin{equation}\label{eq:SR-bound}
            |S_R(t)-S_R(0)| \le C (\log R)^{\frac{p-1}{p}} J_R^{\frac{1}{p}},
        \end{equation}
        where
        \[
            \begin{aligned}
                J_R &:= \int_0^t \sum_{x,y\in V} \left( |u(x,\tau)|^{r_R} + |u(y,\tau)|^{r_R} \right) h_R(x)^p \omega_{xy} \, d\tau \\
                &\leq C \int_0^t \left( \sum_{x\in V} |u(x,\tau)|\mu(x) \right)^{r_R} \left( \sum_{x\in V} h_R(x)^{\frac{p}{1-r_R}}\mu(x) \right)^{1-r_R} \, d\tau.
            \end{aligned}
        \]
        
        By the estimate of $h_R(x)$ in \eqref{eq:capacity-bound} and the identity $\frac{1-r_R}{p} = \frac{1}{\sigma_R}$, we obtain
        \begin{equation}\label{eq:JR-bound}
            J_R^{\frac{1}{p}} \le C_t \left( (\log R)^{1-\sigma_R} \right)^{\frac{1-r_R}{p}} = C_t (\log R)^{\frac{1}{\sigma_R}-1}.
        \end{equation}
        Substituting \eqref{eq:JR-bound} back into \eqref{eq:SR-bound}, we obtain
        \[
            |S_R(t)-S_R(0)| \leq C_t (\log R)^{\frac{p-1}{p}+\frac{1}{\sigma_R}-1} = C_t (\log R)^{\frac{p-1}{p}+\frac{1-r_R}{p}-1} = C_t (\log R)^{-\frac{r_R}{p}},
        \]
        where $C_t>0$ is some constant dependent on $t$.
        Since $r_R = a - \frac{p-1}{\log R} \to a > 0$ as $R \to \infty$, we have $(\log R)^{-r_R/p} \to 0$ as $R \to \infty$, which implies that
        \[
            \lim_{R\to\infty} |S_R(t)-S_R(0)| = 0.
        \]
        An application of the dominated convergence theorem then yields
        \[
            \sum_{x\in V} u(x,t)\mu(x) = \sum_{x\in V} u_0(x)\mu(x).
        \]
        This completes the proof of the theorem.
    \end{proof}

	\section{Asymptotic Properties and Propagation of Finite Support Initial Data}
    \label{sec:asymptotic-propagation}
    From now on, let us use the notation $\delta:=q(p-1)-1$.
     \subsection{Asymptotic Properties}
    
   \begin{lemma}\label{monolem}
Assume that $q\geq 1$. For some real $\lambda>1$ assume that $\alpha=\frac{\lambda+\delta}{p}>0$ and let $$u:V\to\mathbb R.$$
	Then, for all $h\geq 0$ and all
	$x,y\in V$,
	\begin{equation}\label{monotone}\Bigl(
	|\nabla_{xy}u(x)^{q}|^{p-2}\nabla_{xy}u(x)^{q}
	\Bigr)
	\nabla_{xy}\!\Bigl(
	(u(x)-h)_+^{\lambda-1}
	\Bigr)\ge
	C
	\left|
	\nabla_{xy}
	\Bigl(
	(u(x)-h)_+^{\alpha}
	\Bigr)
	\right|^p ,\end{equation} where $C$ is a positive constant depending on $p, q$ and $\lambda$.
\end{lemma}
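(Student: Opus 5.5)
The plan is to reduce \eqref{monotone} to a one-variable inequality. We fix an edge, shift by $h$ to land in the case $h=0$, and then use homogeneity.

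\emph{Reduction.} Write $a=u(y)$ and $b=u(x)$. Both sides of \eqref{monotone} are symmetric under swapping $a$ and $b$, since each factor on the left changes sign. So we may assume $a>b$; if $a=b$ both sides vanish. If $a\le h$, then $(a-h)_+=(b-h)_+=0$ and both sides are zero. Hence we assume $a>h\ge 0$ and set $A=a-h>0$ and $B=(b-h)_+\in[0,A)$. The left-hand side is then $(a^q-b^q)^{p-1}(A^{\lambda-1}-B^{\lambda-1})$, where both factors are nonnegative because $s\mapsto s^q$ and $s\mapsto s^{\lambda-1}$ are increasing. The right-hand side is $C(A^\alpha-B^\alpha)^p$.

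\emph{Key step: removing $h$ from the $q$-factor.} We claim that $a^q-b^q\ge A^q-B^q$, and this is where $q\ge1$ is used.
\begin{itemize}
\item If $b\ge h$, then $B=b-h\ge 0$ and
$$a^q-b^q=q\int_B^A(\sigma+h)^{q-1}\,d\sigma\ge q\int_B^A\sigma^{q-1}\,d\sigma=A^q-B^q,$$
because $\sigma\mapsto\sigma^{q-1}$ is nondecreasing on $[0,\infty)$ for $q\ge1$.
\item If $b<h$, then $B=0$. Monotonicity of the odd power $s\mapsto s^q$ gives $a^q-b^q\ge a^q-h^q$. The previous computation with $B=0$ then gives $a^q-h^q\ge A^q$.
\end{itemize}
Since $p-1>0$, it therefore suffices to prove
$$(A^q-B^q)^{p-1}(A^{\lambda-1}-B^{\lambda-1})\ge C(A^\alpha-B^\alpha)^p\qquad\text{for } A>B\ge0.$$

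\emph{Homogeneity and compactness.} Both sides are homogeneous in $(A,B)$ of degree $q(p-1)+\lambda-1=\alpha p$, by the definition of $\alpha$. Put $t=B/A\in[0,1)$. The inequality then reduces to showing that
$$f(t)=\frac{(1-t^q)^{p-1}(1-t^{\lambda-1})}{(1-t^\alpha)^p}$$
is bounded below by a positive constant on $[0,1)$.

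The function $f$ is continuous and positive on $[0,1)$, because $q>0$, $\lambda>1$ and $\alpha>0$; in particular $f(0)=1$. As $t\to1^-$, L'Hôpital's rule (equivalently, first-order Taylor expansion) gives
$$f(t)\to\frac{q^{p-1}(\lambda-1)}{\alpha^p}>0.$$
So $f$ extends to a continuous positive function on $[0,1]$, and $C:=\min_{[0,1]}f>0$ depends only on $p$, $q$ and $\lambda$.

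The only delicate point is the shift step. It uses $q\ge1$ (monotonicity of $\sigma^{q-1}$) and the sign convention $k^q=|k|^{q-1}k$ when $b<0$; for $q<1$ the comparison $a^q-b^q\ge A^q-B^q$ would fail.
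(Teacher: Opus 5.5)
Your proof is correct. The reduction is the same as the paper's. Both use $q\ge 1$ to bound $u(y)^q-u(x)^q$ from below by $A^q-B^q$, with $A=u(y)-h$ and $B=(u(x)-h)_+$, splitting into the cases $u(x)\ge h$ and $u(x)<h$. Your representation $a^q-b^q=q\int_B^A(\sigma+h)^{q-1}\,d\sigma$ actually justifies this comparison, which the paper only asserts.

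Where you differ is the core inequality $(A^q-B^q)^{p-1}(A^{\lambda-1}-B^{\lambda-1})\ge C(A^\alpha-B^\alpha)^p$. The paper writes $A^\alpha-B^\alpha=\alpha\int_B^A s^{\alpha-1}\,ds$ and splits $s^{\alpha-1}=s^{(q-1)(p-1)/p}\,s^{(\lambda-2)/p}$; this is exactly where $\alpha p=\lambda+\delta$ enters. It then applies H\"older with exponents $p/(p-1)$ and $p$, which yields the explicit constant $C=q^{p-1}(\lambda-1)/\alpha^p$. You instead use the degree-$\alpha p$ homogeneity to reduce to the function $f(t)$ on $[0,1)$, and conclude from continuity and the positive limit at $t=1$.

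Your route is softer and needs no clever splitting of exponents, but it only produces an existential constant. The H\"older argument shows $f(t)\ge q^{p-1}(\lambda-1)/\alpha^p$ for every $t$. So your $\min_{[0,1]} f$ in fact equals the paper's constant, reached in the limit $t\to 1^-$.

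The explicit form also shows how $C$ degenerates, linearly as $\lambda\to 1$. That matters when uniformity in a parameter is needed, as in Lemma~\ref{lem:power-difference}, where the same H\"older trick tracks the factor $\varepsilon$; a compactness argument would not give that rate. For the present lemma, with $\lambda$ fixed, either route suffices.
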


\begin{proof}
If $u(x),u(y)\leq h$, then both sides of \eqref{monotone} vanish. Now let us consider the case when $u(y)>u(x)>h$. Since $q\geq 1$, we have 
$$
u(y)^q-u(x)^q\geq (u(y)-h)^{q}-(u(x)-h)^{q}.
$$ 
Thus, with $a=u(y)-h$ and $b=u(x)-h$, it suffices to prove
\begin{equation}\label{eleineq}(a^{q}-b^{q})^{p-1}(a^{\lambda-1}-b^{\lambda-1})\geq C(a^{\alpha}-b^{\alpha})^{p}\end{equation} by setting $a=u(y)-h$ and $b=u(x)-h$. 
Indeed, we have by Hölder's inequality \begin{align*}
(a^{\alpha}-b^{\alpha})^{p}=\left(\alpha\int_{b}^{a}s^{\alpha-1}\right)^{p}&=\alpha^p\left(\int_{b}^{a}s^{\frac{(q-1)(p-1)}{p}}s^{\frac{\lambda-2}{p}}ds\right)^{p}\\&\leq \alpha^p\left(\int_{b}^{a}s^{q-1}ds\right)^{p-1}\left(\int_{b}^{a}s^{\lambda-2}ds\right)\\&=\frac{\alpha^{p}}{q^{p-1}(\lambda-1)}(a^{q}-b^{q})^{p-1}(a^{\lambda-1}-b^{\lambda-1}),
\end{align*} which implies (\ref{eleineq}). In the remaining case when $u(x)<h<u(y)$, note that since $q\geq 1$, 
$$
u(y)^q-u(x)^q\geq (u(y)-h)^{q},
$$ 
which again yields (\ref{eleineq}) and thus finishes the proof. 
\end{proof}
    
    From the above lemma, we obtain the following Caccioppoli-type inequality.
    
    \begin{lemma}\label{lemcacc}
        Suppose that $q\geq1$, $\delta=q(p-1)-1>0$, and $\lambda>1$.
        Let $u\in L^\infty([0,T);\ell^\lambda(V))$ be a solution of (\ref{eq:infinite-parabolic}) and let $\eta(t)$ be a Lipschitz function in $(0, \infty)$. Set $\alpha=\frac{\sigma}{p}$, where $\sigma=\lambda+\delta>0$. Choose $0\leq t_{1}<t_{2}< T$. Then, for any $h\geq0$,	
    	\begin{align}\nonumber
    			\left[\sum_{x\in V}
    		(u(x,t)-h)_+^\lambda\eta(t)\mu(x)\right]_{t_1}^{t_2}&+	\int_{t_1}^{t_2}
    		\sum_{x,y\in V}
    		\left|
    		\nabla_{xy}
    		\Bigl(
    		(u(x,t)-h)_+^{\alpha}
    		\Bigr)
    		\right|^p\eta(t)
    		\omega_{xy}\,dt\\&\label{caccio}\quad\leq 
    	C\int_{t_1}^{t_2}
    		\sum_{x\in V}
    		(u(x,t)-h)_+^\lambda\mu(x)|\eta^{\prime}(t)|\,dt,
    	\end{align}
    	where $C$ depends only on $p$, $q$, and $\lambda$.
    \end{lemma}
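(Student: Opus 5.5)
The natural route is to test the equation with $(u(x,t)-h)_+^{\lambda-1}\zeta(x)\eta(t)\mu(x)$. Here $\zeta=\zeta_{R_1,R_2}$ is the cutoff from \eqref{cutoff}. We sum over $x$, integrate over $[t_1,t_2]$, and then remove the cutoff exactly as in the proofs of \cref{thm: nonneg} and \cref{thm:mass-conservation}.

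\textbf{The time derivative.} Since $\partial_t\bigl[\tfrac1\lambda (u-h)_+^\lambda\bigr]=(u-h)_+^{\lambda-1}\partial_t u$ and the sum over $\operatorname{supp}\zeta$ is finite, we can integrate by parts in $t$. The left-hand side becomes
\begin{equation*}
\frac1\lambda\Bigl[\sum_x\zeta(x)(u-h)_+^\lambda\eta\,\mu(x)\Bigr]_{t_1}^{t_2}-\frac1\lambda\int_{t_1}^{t_2}\sum_x\zeta(x)(u-h)_+^\lambda\eta'\,\mu(x)\,dt .
\end{equation*}

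\textbf{The spatial term.} By \cref{lem:Green-formula} it equals $-\frac12(I_1+I_2)$. The term $I_1$ carries $\zeta(y)\,\nabla_{xy}(u-h)_+^{\lambda-1}$. By \cref{monolem}, which needs $q\ge1$, it is bounded below by $C\int\sum_{x,y}\zeta(y)\,|\nabla_{xy}(u-h)_+^\alpha|^p\,\eta\,\omega_{xy}\,dt$. The term $I_2$ carries $\nabla_{xy}\zeta\,(u(x)-h)_+^{\lambda-1}$, with $|\nabla_{xy}\zeta|\le (R_2-R_1)^{-1}$. Using $|\nabla_{xy}u^q|^{p-1}\le C(|u(x)|^{q(p-1)}+|u(y)|^{q(p-1)})$, the H\"older step from the proof of \cref{thm: nonneg}, and $u\in L^\infty([0,T);\ell^\lambda)$, we get $|I_2|\le C(t_2-t_1)\sup|\eta|/(R_2-R_1)$. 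This uses that $q(p-1)>1$, so $\lambda q(p-1)\ge\lambda$ and $q(p-1)+\lambda-1\ge\lambda$, and \cref{rem:fk-measure-lower} bounds those norms.

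\textbf{Passing to the limit.} We send $R_2\to\infty$ and then $R_1\to\infty$. Since $(u-h)_+^\lambda\le |u|^\lambda$ is summable uniformly in $t$, dominated convergence handles the boundary and $\eta'$ terms, and monotone convergence handles the gradient term. We then multiply through by $\lambda$, giving a constant depending on $p,q,\lambda$.

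\textbf{Sign of $\eta$.} We should assume $\eta\ge0$, as is implicit in the statement. Otherwise $\eta'$ gets no absolute value and the monotonicity estimate loses its sign. Under that assumption, bounding $\eta'$ by $|\eta'|$ gives \eqref{caccio}.

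\textbf{Main obstacle.} The delicate point is justifying the integration by parts and the limits without any decay of the gradient. $I_1$ must be handled by monotone convergence since it is nonnegative, and $I_2$ must vanish uniformly in $t$. The pointwise bound $|u|\le\mu_0^{-1/\lambda}\|u\|_{\ell^\lambda}$ supplies exactly that.
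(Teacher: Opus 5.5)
Your proposal is essentially the paper's own proof. It uses the same test function $\zeta_{R_1,R_2}(x)\eta(t)(u(x,t)-h)_+^{\lambda-1}\mu(x)$, bounds the term carrying $\zeta(y)$ from below via \cref{monolem}, bounds the term carrying $\nabla_{xy}\zeta$ by $C/(R_2-R_1)$ exactly as for $I_2$ in the nonnegativity theorem, and then lets $R_2\to\infty$ followed by $R_1\to\infty$. Your remark that $\eta\ge0$ is needed is correct, and the paper uses it tacitly. Note also that both you and the paper really obtain a constant $c(p,q,\lambda)$, possibly smaller than $1$, in front of the gradient term. This can be normalized to $1$ by enlarging $C$ only when the boundary term at $t_1$ vanishes (e.g.\ $\eta(t_1)=0$), which holds in every later application.
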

    
    \begin{proof}
    Multiplying the equation (\ref{eq:infinite-parabolic}) by
    $\zeta(x)\eta(t)
    	(u(x,t)-h)_+^{\,\lambda-1}
    	\mu(x)$ where $\zeta$ is given by (\ref{cutoff}), integrating in time over $(t_1, t_2)$ 
    	and summing over vertices gives
    \begin{align*}
    \frac1\lambda\left[\sum_{x\in V}
    (u(x,t)-h)_+^\lambda\zeta(x)\eta(t)\mu(x)\right]_{t_1}^{t_2}-	\frac1\lambda
    \int_{t_{1}}^{t_{2}}
    \sum_{x\in V}
    \zeta(x)
    (u(x,t)-h)_+^\lambda
    \mu(x)\eta'(t)\,dt=-\frac12(L_1+L_2),
    \end{align*}	
    	where
    	\[
    	L_1
    	=
    	\int_{t_1}^{t_2}
    	\sum_{x,y\in V}
    	|\nabla_{xy}(u^q(\cdot,t))|^{p-2}
    	\nabla_{xy}(u^q(\cdot,t))
    	\zeta(y)
    	\nabla_{xy}\!\Bigl(
    	(u(x,t)-h)_+^{\,\lambda-1}
    	\Bigr)
    	\omega_{xy}
    	\eta(t)\,dt,
    	\]
    	and
    	\[
    	L_2
    	=
    	\int_{t_1}^{t_2}
    	\sum_{x,y\in V}
    	|\nabla_{xy}(u^q(\cdot,t))|^{p-2}
    	\nabla_{xy}(u^q(\cdot,t))
    	\nabla_{xy}\zeta(x)
    	(u(x,t)-h)_+^{\,\lambda-1}
    	\omega_{xy}
    	\eta(t)\,dt.
    	\]
    	As in the estimate of $I_2$ from \cref{thm: nonneg}, we have	
    	\[
    	|L_2|
    	\le
    	\frac{C}{R_2-R_1}.
    	\]
    	It follows from Lemma \ref{monolem}, that there exists a constant $c>0$ such that
    	\[
    	L_1
    	\ge
    	c
    	\int_{t_1}^{t_{2}}
    	\sum_{x,y\in B_{R_1}(x_0)}
    	\left|
    	\nabla_{xy}
    	\Bigl(
    	(u(x,t)-h)_+^{\alpha}
    	\Bigr)
    	\right|^p\eta(t)
    	\omega_{xy}\,dt.
    	\]
    	Hence,
    	
    	\begin{align*}
    	\frac1\lambda\left[\sum_{x\in V}
    	(u(x,t)-h)_+^\lambda\zeta(x)\eta(t)\mu(x)\right]_{t_1}^{t_2}&+	c
    	\int_{t_1}^{t_{2}}
    	\sum_{x,y\in B_{R_1}(x_0)}
    	\left|
    	\nabla_{xy}
    	\Bigl(
    	(u(x,t)-h)_+^{\alpha}
    	\Bigr)
    	\right|^p
    	\omega_{xy}\,dt\\&\leq	\frac1\lambda
    	\int_{t_{1}}^{t_{2}}
    	\sum_{x\in V}
    	\zeta(x)
    	(u(x,t)-h)_+^\lambda
    	\mu(x)|\eta'(t)|\,dt +	\frac{C}{R_2-R_1}
    	\end{align*}
    	Letting first
    	$
    	R_2\to\infty,
    	$
    	and then
    	$
    	R_1\to\infty,
    	$
    	gives the desired estimate.	
    \end{proof}

    The next result contains Theorem \ref{thm:upper-boundint} from Introduction. 
    
    \begin{theorem}\label{thm:upper-bound}
	Assume graph $G$ satisfies the Faber--Krahn inequality and that $\delta> 0$. Let $u_0\in\ell^r(V)$, $r\geq1$, be nonnegative, and let
	$u\in L^\infty([0,T);\ell^r(V))$
    be a solution of (\ref{eq:infinite-parabolic}).
	Then, for every $0<t<T$,
		\begin{equation}\label{MV}
	||u(t)||_{\ell^{\infty}(V)}
	\le
	C
	\|u_0\|_{\ell^r(V)}
	\varphi_r
	\left(
	t^{-1}
	\|u_0\|_{\ell^r(V)}^{-\frac{\delta}{r}}
	\right)^{1/r},
\end{equation}
where the constant $C>0$ depends only on $p, q, r$ and $\Lambda_{p}$.
\end{theorem}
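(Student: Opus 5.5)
Our plan is a De Giorgi type iteration on level sets, built from the Caccioppoli inequality of \cref{lemcacc} (with $\lambda=r$) and the Faber--Krahn inequality \eqref{uFK}. We then combine it with the $\ell^r$-contraction \eqref{lrcontratcion} (or the $\ell^1$ bound from \cref{thm:mass-conservation} when $r=1$) and with nonnegativity (\cref{rem:nonnegative-alternative}). A caveat: \cref{lemcacc} assumes $q\ge1$ and $\lambda>1$. For $q<1$ we would redo \cref{monolem} using $u(y)^q-u(x)^q\ge c\,(\ldots)$, working only on the region where $u>h>0$. For $r=1$ we would take $\lambda$ slightly above $1$, or use a $\lambda=2$ step followed by interpolation.

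\textbf{Setup.} Fix $t$ and a level $k>0$ to be chosen later. Define $h_j=k(1-2^{-j-1})$ and $t_j=\tfrac t2(1-2^{-j})$, let $\eta_j$ be the Lipschitz cutoff in time that vanishes before $t_{j}$ and equals $1$ after $t_{j+1}$, and set
\[
Y_j=\sup_{s\in(t_{j+1},t)}\sum_x (u(s)-h_{j+1})_+^r\mu(x)+\int_{t_{j+1}}^{t}\sum_{x,y}|\nabla_{xy}(u-h_{j+1})_+^{\alpha}|^p\omega_{xy}\,ds,
\]
with $\alpha=(r+\delta)/p$.

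\textbf{Iteration.} \cref{lemcacc} gives $Y_j\le C2^j t^{-1}\int_{t_j}^t\sum_x(u-h_{j+1})_+^r\mu\,ds$. To control the right side, note that on the set $U_s=\{u(s)>h_{j+1}\}$ Chebyshev gives $\mu_V(U_s)\le (2^{j}/k)^r\,E_{j}$, where $E_j$ denotes the sup part of $Y_{j-1}$. Applying \eqref{uFK} to $f=(u-h_{j+1})_+^{\alpha}$, and using monotonicity of $\Lambda_p$, yields $\sum f^p\mu\le \Lambda_p(\mu_V(U_s))^{-1}\sum|\nabla f|^p\omega$. Since $p\alpha=r+\delta>r$, and on $U_s$ we have $(u-h_{j+1})^r\le (u-h_{j+1})^{r+\delta}(k2^{-j-2})^{-\delta}$, we obtain
\[
Y_j\le \frac{C b^j}{t\,k^{\delta}}\,\Lambda_p\!\bigl(k^{-r}Y_{j-1}\bigr)^{-1}Y_{j-1}.
\]
The growth hypothesis \eqref{assumponLamb}, namely that $\Lambda_p(v)^{-1}v^{-\nu}$ is nonincreasing, lets us write $\Lambda_p(k^{-r}Y_{j-1})^{-1}\le\Lambda_p(k^{-r}Y_0')^{-1}(Y_{j-1}/Y_0')^{\nu}$, where $Y_0'=\|u_0\|_{\ell^r}^r$ bounds all $Y$ from above (using \eqref{lrcontratcion}). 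This puts the recursion in the standard form $Y_j\le A b^j Y_{j-1}^{1+\nu}$.

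\textbf{Closing the argument.} The standard fast geometric convergence lemma then gives $Y_j\to0$, hence $u(t)\le k$, provided $A\,(Y_0')^{\nu}\le c$. This condition reads $t^{-1}k^{-\delta}\Lambda_p(k^{-r}\|u_0\|_r^r)^{-1}\le c$. Setting $s=k^r\|u_0\|_r^{-r}$, it becomes $s^{\delta/r}\Lambda_p(s^{-1})\ge C t^{-1}\|u_0\|_r^{-\delta}$, that is, $\psi_r(s)\ge Ct^{-1}\|u_0\|_r^{-\delta}$. Choosing $s=\varphi_r(Ct^{-1}\|u_0\|_r^{-\delta})$, which is legitimate because $\psi_r$ is a bijection when $\delta>0$, gives $k=\|u_0\|_r\varphi_r(\cdot)^{1/r}$. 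The constant $C$ inside $\varphi_r$ can be pulled out using the doubling-type control of $\varphi_r$ that follows from \eqref{assumponLamb}. This yields \eqref{MV}.

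The main obstacle is the bookkeeping of the nonpower function $\Lambda_p$ inside the iteration: one must convert it into a power law with exponent $\nu$ while the sets shrink, and make sure the exponent $(r+\delta)/r$ produces superlinear growth. A secondary issue is the $q<1$ and $r=1$ edge cases of \cref{lemcacc} noted above.
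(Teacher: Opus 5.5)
Your overall scheme matches the paper's: Caccioppoli from \cref{lemcacc}, Faber--Krahn on truncations, a level-set iteration, and the closing choice $\psi_r(k^r\|u_0\|_{\ell^r(V)}^{-r})\simeq t^{-1}\|u_0\|_{\ell^r(V)}^{-\delta}$. However, the superlinearity step is wrong as written, because you use the wrong half of \eqref{assumponLamb}. For $v_1=k^{-r}Y_{j-1}\le v_2=k^{-r}Y_0'$, the fact that $\Lambda_p(v)^{-1}v^{-\nu}$ is nonincreasing gives $\Lambda_p(v_1)^{-1}\ge (v_1/v_2)^{\nu}\Lambda_p(v_2)^{-1}$. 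That is the reverse of what you need; take $\Lambda_p(v)=v^{-a}$ with $p/N\le a<\nu$. The decay has to come from $\Lambda_p(v)^{-1}v^{-p/N}$ being nondecreasing. This gives $\Lambda_p(v_1)^{-1}\le (v_1/v_2)^{p/N}\Lambda_p(v_2)^{-1}$ and the exponent $1+p/N$; in the paper it is $1+\frac pN\frac\lambda\delta$. The $\nu$-condition is needed elsewhere: to pull the factor $2^{jr}$ from your Chebyshev bound out of $\Lambda_p$. Dropping it silently goes the wrong way, since $\Lambda_p$ is decreasing. Also, $(u-h_{j+1})^r\le (u-h_{j+1})^{r+\delta}(k2^{-j-2})^{-\delta}$ fails where $u$ is just above $h_{j+1}$. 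You need an intermediate level $\bar h_j\in(h_j,h_{j+1})$, Faber--Krahn for $(u-\bar h_j)_+^{\alpha}$, and an extra Caccioppoli step for its gradient. These points are repairable.

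The genuine gaps are the two cases you defer, which the theorem covers; $r=1$ is exactly the case used in \cref{thm:propagation-finite-support}.

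For $q<1$, \eqref{monotone} cannot be redone for truncations of $u$. With $u(x)=h$ and $u(y)=h+\epsilon$, the left side is $\approx q^{p-1}h^{(q-1)(p-1)}\epsilon^{p+\lambda-2}$ and the right side is $\approx C\epsilon^{\lambda+\delta}$, and $p+\lambda-2>\lambda+\delta$ precisely when $q<1$. Switching to the exponent $(\lambda+p-2)/p$ instead produces a constant $\sim(\sup u)^{(q-1)(p-1)}$, which is circular for an $\ell^\infty$ bound. The paper sets $w=u^q$ and $m=1/q$, so that $\partial_t w^m=\Delta_p w$ involves a genuine $p$-Laplacian. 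The nonlinearity moves into the time term $H_k(w)$, whose weight $w^{m-1}$ is controlled by the level shift $w\le\frac{2^{i+1}}{\rho_2-\rho_1}(w-k_{i+1})$ on $\{w>k_i\}$. The bound for $w$ (with $\widetilde\delta=\delta/q$) is converted back via $\Phi^m=\varphi_r$.

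For $r=1$, tying the energy exponent to the data exponent is the obstruction. With $\lambda>1$, your starting quantity is an $\ell^\lambda$-energy above level $k/2$, which $\|u_0\|_{\ell^1(V)}$ controls only through a sup bound. The paper decouples the two exponents by a two-stage iteration:
\begin{itemize}
\item First, \eqref{firstit} bounds the $\ell^\lambda$-energy above $k(1-\rho_1)$ purely by $t^{-\lambda/\delta}\Lambda_p(M_\infty)^{-\lambda/\delta}M_\infty$. This uses Young absorption and the inner iteration $J_i\le\gamma J_{i+1}+\dots$ with $\sup_j J_j<\infty$.
\item Second, the iteration runs on level-set measures, so the data enter only through $Y_0\le 2^rk^{-r}E_r(t)$, which is valid for every $r\ge1$.
\end{itemize}
Your interpolation alternative is not carried out, and it is not routine for a general $\Lambda_p$.
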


\begin{proof}
By assumption and by Remark \ref{rem:fk-measure-lower}, $u\in L^\infty(0,T;\ell^\lambda(V))$
for some $\lambda>1$ and, for all $k>0$, the truncated function
$(u(x, t)-k)_+$ is finitely supported.

We first consider the case $q\geq1$. 
For given $0<\rho_1<\rho_2<1/2$, $k>0$, $0<t<T$ define for $i=0,1,2,\ldots$, the
decreasing sequences
\begin{equation}\label{ki}
k_i=k\Bigl(1-\rho_2+2^{-i}(\rho_2-\rho_1)\Bigr),
\end{equation}
\begin{equation}\label{ti}
t_i=\frac t2
\Bigl(1-\rho_2+2^{-i}(\rho_2-\rho_1)\Bigr),
,
\end{equation}
and let
$
v_i(x,\tau)=(u(x,\tau)-k_i)_+.
$
Let $\sigma=\lambda+\delta$, $\alpha=\frac{\sigma}{p}$ and set
$$
m_i(\tau):=\mu_V\bigl(\{x\in V:u(x,\tau)>k_i\}\bigr).
$$
Since $\alpha p=\sigma$ and
$\operatorname{supp}v_{i+1}(\cdot,\tau)
\subset\{u(\cdot,\tau)>k_{i+1}\}$, the Faber--Krahn inequality
\eqref{uFK}, applied to $f=v_{i+1}(\cdot,\tau)^\alpha$, implies that
$$
\Lambda_p(m_{i+1}(\tau))
\sum_{x\in V}v_{i+1}(x,\tau)^\sigma\mu(x)
\leq
\sum_{x,y\in V}|\nabla_{xy}v_{i+1}(x,\tau)^\alpha|^p\omega_{xy}.
$$
Moreover, H\"older's inequality implies that
$$
\sum_{x\in V}v_{i+1}(x,\tau)^\lambda\mu(x)
\leq m_{i+1}(\tau)^{1-\frac{\lambda}{\sigma}}
\left(\sum_{x\in V}v_{i+1}(x,\tau)^\sigma\mu(x)\right)^{\frac{\lambda}{\sigma}}.
$$
Combining the two
inequalities above and then applying Young's inequality, we obtain that
\begin{align}\nonumber
\sum_{x\in V}v_{i+1}(x,\tau)^\lambda\,\mu(x)
&\le
m_{i+1}(\tau)^{1-\frac \lambda\sigma}
\Lambda_p(m_{i+1}(\tau))^{-\frac \lambda\sigma}
\left(\sum_{x,y\in V}|\nabla_{xy}v_{i+1}(x,\tau)^{\alpha}|^p\omega_{x,y}\right)^{\frac \lambda\sigma}
\\&\label{appFK}\leq\
\varepsilon^{\frac \sigma\lambda}\sum_{x,y\in V}|\nabla_{xy}v_{i+1}(x,\tau)^{\alpha}|^p\omega_{x,y}
+
\varepsilon^{-\frac \sigma\delta}
\Lambda_p(m_{i+1}(\tau))^{-\frac \lambda\delta}
m_{i+1}(\tau),
\end{align}
where $\varepsilon>0$ will be chosen below.
Integrating (\ref{appFK}) over $(t_{i+1},t)$ gives
\begin{equation}
    \begin{aligned}\label{afterinte}
    \int_{t_{i+1}}^t\sum_{x\in V}v_{i+1}(x,\tau)^\lambda\,\mu(x)d\tau
    &\leq \varepsilon^{\frac \sigma\lambda}	\int_{t_{i+1}}^t\sum_{x,y\in V}|\nabla_{xy}v_{i+1}(x,\tau)^{\alpha}|^p\omega_{x,y}\,d\tau
    \\
    &\quad+\varepsilon^{-\frac \sigma\delta}
    t\Lambda_p(M_{i+1})^{-\frac \lambda\delta}
    M_{i+1},
    \end{aligned}
\end{equation}
where $M_{i}=\sup_{t_{i}<\tau<t}m_{i}(\tau)$.
Since $q \geq 1$, we can apply \eqref{caccio} using a piecewise linear cutoff function $\eta$. Specifically, we define
\begin{equation*}
	\eta (\tau) =
	\begin{cases}
		1, & \tau \geq t_{i}, \\
		\frac{\tau - t_{i+1}}{t_{i} - t_{i+1}}, & t_{i+1} \leq \tau \leq t_{i}, \\
		0, & \tau \leq t_{i+1},
	\end{cases}
\end{equation*}
so that almost everywhere it holds
$
	|\eta'(\tau)| \leq \frac{1}{t_{i} - t_{i+1}}.
$
Together with (\ref{afterinte}) we obtain
\begin{align*}
J_{i}&:=	\sup_{t_i<\tau<t}
\sum_{x\in V}v_i(x,\tau)^\lambda\,\mu(x)+	\int_{t_i}^t\sum_{x,y\in V}|\nabla_{xy}v_i(x,\tau)^{\alpha}|^p\omega_{x,y}\,d\tau\\
&\leq 	\frac{C2^i}{t(\rho_2-\rho_1)}
\int_{t_{i+1}}^t
\sum_{x\in V}v_{i+1}(x,\tau)^\lambda\,\mu(x)\,d\tau\\
&\leq \frac{C2^i}{t(\rho_2-\rho_1)}\varepsilon^{\frac \sigma\lambda}	\int_{t_{i+1}}^t\sum_{x,y\in V}|\nabla_{xy}v_{i+1}(x,\tau)^{\alpha}|^p\omega_{x,y}\,d\tau
+\frac{C2^i}{t(\rho_2-\rho_1)}
\varepsilon^{-\frac \sigma\delta}
t\Lambda_p(M_{i+1})^{-\frac \lambda\delta}
M_{i+1}
\end{align*}
Let us set $\gamma=
\frac{C2^i}{t(\rho_2-\rho_1)}
\varepsilon^{\frac \sigma\lambda}$, that is, 
$\varepsilon=C
	\gamma^{\frac \lambda\sigma}
	t^{\frac \lambda\sigma}
	(\rho_2-\rho_1)^{\frac \lambda\sigma}
	2^{-\frac \lambda\sigma\,i}.
$
Substituting this choice of $\varepsilon$ yields
	\begin{equation}\label{comparison}
	J_i
	\le
	\gamma J_{i+1}
	+
	C
	2^{\frac{\sigma}{\delta}i}
	(\rho_2-\rho_1)^{-\frac \sigma\delta}
	\gamma^{-\frac \lambda\delta}
	t^{-\frac \lambda\delta}
	\Lambda_p(M_\infty)^{-\frac \lambda\delta}
	M_\infty,\end{equation}
	where
	$
	M_\infty
	=
	\sup_{\frac t2(1-\rho_2)<\tau<t}
	\mu_V
	\bigl(
	\{x\in V:\ u(x,\tau)>	k(1-\rho_2)\}
	\bigr).
	$
	Iterating (\ref{comparison}) gives
	\[
	J_0
	\le
	\gamma^jJ_j
	+
	\left(\sum_{i=0}^j
	\gamma^i
	2^{\frac\sigma\delta i}\right)
	\gamma^{-\frac \lambda \delta}
	(\rho_2-\rho_1)^{-\frac \sigma\delta}
	t^{-\frac \lambda\delta}
	\Lambda_p(M_\infty)^{-\frac \lambda\delta}
	M_\infty.
	\]
	Let $t_\infty := \frac{t}{2}(1-\rho_2)$ and $k_\infty := k(1-\rho_2)$. Applying \cref{lemcacc} with $h=k_j$ and a time cutoff function that is identically $1$ on $[t_\infty, t]$, we deduce that $\sup_j J_j < \infty$. Consequently, $\gamma^j J_j \to 0$ as $j \to \infty$.
    Thus, passing to the limit as $j \to \infty$ under the condition $\gamma < 2^{-\sigma/\delta}$, we obtain
    \begin{equation}\label{firstit}
    	\sup_{\frac t2(1-\rho_1)<\tau<t}
    	\sum_{x\in V}
    	(u(x,\tau)-k(1-\rho_1))_+^\lambda\,\mu(x)
    	\le
    	C
    	(\rho_2-\rho_1)^{-\frac \sigma\delta}
    	t^{-\frac \lambda\delta}
    	\Lambda_p(M_\infty)^{-\frac \lambda\delta}
    	M_\infty.
    \end{equation}

	Let us now use a second process of iteration
	based on (\ref{firstit}).
	Let $k>0$, and define, for $n\geq 0$,
	$
	\tau_n=\frac t2(1- 2^{-n-2}), 
    $
    $
	k_n=k(1- 2^{-n-2}),
	$
	$
	\bar k_n
	=
	\frac{k_n+k_{n+1}}2
	=
	k\left(1-3\cdot 2^{-n-4}\right)
	$
	and
	$$
	Y_n
	=
	\sup_{\tau_n<\tau<t}
	\mu_V
	\bigl(
	\{x\in V:\ u(x,\tau)>k_n\}
	\bigr).
	$$
	From Chebyshev's inequality, we deduce
	\[
	Y_{n+1}
	\le
	2^{(n+2)\lambda}4^{\lambda}k^{-\lambda}
	\sup_{\tau_{n+1}<\tau<t}
	\sum_{x\in V}
	(u(x,\tau)-\bar k_n)_+^\lambda\,\mu(x).
	\]
	The right-hand side is estimated using (\ref{firstit})
	with $\rho_1=3\cdot 2^{-n-4}$ and $\rho_2= 2^{-n-2}$,
	to obtain
	\[
	Y_{n+1}
	\le
	C
	2^{n(\lambda+\frac{\sigma}{\delta})}
	t^{-\frac \lambda\delta}
	k^{-\lambda}
	\Lambda_p(Y_n)^{-\frac \lambda\delta}
	Y_n.
	\]
	Using assumption (\ref{assumponLamb}) this yields
\begin{equation}\label{beforeCh}Y_{n+1}
	\le
	C
2^{n(\lambda+\frac{\sigma}{\delta})}
t^{-\frac \lambda\delta}
k^{-\lambda}
	\Lambda_p(Y_0)^{-\frac \lambda\delta}
	Y_0^{-\frac pN\frac \lambda\delta}
	Y_n^{1+\frac pN\frac \lambda\delta}.\end{equation}
Recall that \[
Y_0
=
\sup_{t/4<\tau<t}
\mu_V
\bigl(
\{x\in V:
u(x,\tau)>k/2\}
\bigr),
\]
and let
\[
E_r(t)
=
\sup_{0<\tau<t}
\sum_{x\in V}
u(x,\tau)^r
\mu(x).
\]
Clearly,
$
Y_0
\le
2^r k^{-r}E_r.
$
Let $a>0$. It follows from the assumptions in (\ref{assumponLamb}) that, for all $s\ge1$,
\[
\Lambda_p(sa)^{-1}
\le
s^\nu
\Lambda_p(a)^{-1}.
\]
Therefore, 
\[
\Lambda_p(Y_0)^{-\frac\lambda{\delta}}
\le
2^{r\frac{\nu\lambda}{\delta}}
\Lambda_p(k^{-r}E_r(t))^{-\frac\lambda{\delta}}.
\]	
Hence, we obtain from (\ref{beforeCh})		
\begin{equation}\label{iteration}Y_{n+1}\leq\frac{A^{n}Y_{n}^{1+\frac pN\frac \lambda\delta}}{\Theta }\end{equation}
where $$A=2^{\lambda+\frac{\sigma}{\delta}}\geq 1\quad\textnormal{and}\quad\Theta =ct^{\frac \lambda\delta}
k^{\lambda}
\Lambda_p(k^{-r}E_r(t))^{\frac \lambda\delta}
Y_0^{\frac pN\frac \lambda\delta}.$$
Now let us apply Lemma 6.1 from \cite{grigor2024finite} with $\omega =\frac pN\frac \lambda\delta$: if 
\begin{equation}
	\Theta \geq A^{\frac {N\delta}{p\lambda}}Y_{0}^{\frac pN\frac \lambda\delta },  \label{Thetayyy}
\end{equation}%
then, for all $n\geq 0,$ 
$
Y_{n}\leq A^{-n\frac {N\delta}{p\lambda}}Y_{0}.
$
The condition (\ref{Thetayyy}) is equivalent%
\begin{equation*}
ct^{\frac \lambda\delta}
k^{\lambda}
\Lambda_p(k^{-r}E_r(t))^{\frac \lambda\delta}
Y_0^{\frac pN\frac \lambda\delta}\geq A^{\frac {N\delta}{p\lambda}}Y_{0}^{\frac pN\frac \lambda\delta },
\end{equation*}%
that is,%
\begin{equation*}
	k\geq C
t^{-\frac1{\delta}}
\Lambda_p(k^{-r}E_r(t))^{-\frac1{\delta}}
.
\end{equation*}%
Let us choose $k$ to have equality here.
Recall that
$
\psi_r(s)
=
s^{\frac\delta r}
\Lambda_p(s^{-1}),
$
and let
$
\varphi_r
=
\psi_r^{-1}.
$
Then, by our choice of $k$, and the monotonicity properties of $\varphi_r$,
\[
k
=
E_r(t)^{1/r}
\left[
\varphi_r
\left(
C^{\delta}
t^{-1}
E_r(t)^{-\frac{\delta}{r}}
\right)
\right]^{1/r}\leq C'
E_r(t)^{1/r}
\left[
\varphi_r
\left(
t^{-1}
E_r(t)^{-\frac{\delta}{ r}}
\right)
\right]^{1/r}.
\]
For this $k $ we
obtain $Y_{n}\rightarrow 0$ as $n\rightarrow \infty $, which implies (\ref{MV}) by \cref{thm:mass-conservation}.

Now let us consider the case when $q<1$.
Let us set $w=u^{q}$ and $m=\frac{1}{q}>1$ so that $\partial_{t}w^{m}=\Delta_pw$. Testing with $\zeta(x)\eta(t)
(w(x,t)-k)_+^{\,\lambda-1}\mu(x)$, where $\lambda>1$, we obtain similarly to Lemma \ref{lemcacc}, for any $k\geq0$,	
\begin{align}\nonumber
	\left[\sum_{x\in V}
	H_{k}(w(x, t))\eta(t)\mu(x)\right]_{t_1}^{t_2}&+	\int_{t_1}^{t_2}
	\sum_{x,y\in V}
	\left|
	\nabla_{xy}
	\Bigl(
	(w(x,t)-k)_+^{(\lambda+p-2)/p}
	\Bigr)
	\right|^p\eta(t)
	\omega_{xy}\,dt\\&\nonumber\quad\leq 
	C\int_{t_1}^{t_2}
	\sum_{x\in V}
	H_{k}(w(x, t))\mu(x)\eta^{\prime}(t)\,dt,
\end{align}
where
$$
H_k(s) = 
\begin{cases}
	m \int_{k}^{s} r^{m-1}(r-k)^{\lambda-1} \, dr, & \text{for } s > k, \\
	0, & \text{for } s \leq k.
\end{cases}
$$ 
Set $\widetilde{\lambda}=m+\lambda-1$ and  $\widetilde{\delta}=p-1-m=\frac{\delta}{q}>0$ and $\widetilde{\sigma}=\widetilde{\lambda}+\widetilde{\delta}$ so that $\lambda+p-2=\widetilde{\sigma}$. Then we have $$H_{k}(w)\geq \frac{m}{\widetilde{\lambda}}(w-k)_{+}^{\widetilde{\lambda}}.$$
Now let $k_i$ and $t_i$ be defined as in (\ref{ki}) and (\ref{ti}) and let $
v_i(x,\tau)=(w(x,\tau)-k_i)_+.
$ 
If $w>k_i$, then $w-k_{i+1}>w-k_i$ and 
$$w={k_{i}}+(w-{k_{i}})\leq \frac{k_{i}}{k_{i}-k_{i+1}}v_{i+1}\leq \frac{2^{i+1}}{\rho_2-\rho_1}v_{i+1}.$$ Therefore, $$H_{{k_{i}}}(w)\leq mw^{m-1}v_{i}^{\lambda}\leq C\frac{2^{i(m-1)}}{(\rho_2-\rho_1)^{m-1}}v_{i+1}^{\widetilde{\lambda}}.$$ 
Applying the Faber--Krahn inequality \eqref{uFK} to
$v_{i+1}^{\widetilde\alpha}$, where
$\widetilde{\alpha}=\widetilde{\sigma}/p$, and then using H\"older's and
Young's inequalities exactly as in \eqref{appFK}, we obtain, with the same
time cutoff $\eta$,
\begin{align*}
&\sup_{t_i<\tau<t}
\sum_{x\in V}v_i(x,\tau)^{\widetilde{\lambda}}\,\mu(x)+	\int_{t_i}^t\sum_{x,y\in V}|\nabla_{xy}v_i(x,\tau)^{\widetilde{\alpha}}|^p\omega_{x,y}\,d\tau\\&\leq\frac{C2^{im}}{t(\rho_2-\rho_1)^m}\varepsilon^{\frac {\widetilde{\sigma}}{\widetilde{\lambda}}}	\int_{t_{i+1}}^t\sum_{x,y\in V}|\nabla_{xy}v_{i+1}(x,\tau)^{\widetilde{\alpha}}|^p\omega_{x,y}\,d\tau
+\frac{C2^{im}}{t(\rho_2-\rho_1)^m}
\varepsilon^{-\frac {\widetilde{\sigma}}{\widetilde{\delta}}}
t\Lambda_p(M_{i+1})^{-\frac {\widetilde{\lambda}}{\widetilde{\delta}}}
M_{i+1},
\end{align*}
where
$m_i(\tau)=\mu_V(\{x\in V:w(x,\tau)>k_i\})$ and
$M_i=\sup_{t_i<\tau<t}m_i(\tau)$.

Since for $Y_0
=
\sup_{t/4<\tau<t}
\mu_V
\bigl(
\{x\in V:
w(x,\tau)>k/2\}
\bigr)
$, $$Y_0\leq 2^{mr}k^{-mr}\sup_{0<\tau<t}
\sum_{x\in V}
u(x,\tau)^r
\mu(x)=2^{mr}k^{-mr}E_r(t),$$ 
we obtain, arguing as in (\ref{iteration}) for $Y_n
=
\sup_{\tau_n<\tau<t}
\mu_V
\bigl(
\{x\in V:\ w(x,\tau)>k_n\}
\bigr)$, $$Y_{n+1}\leq\frac{A^{n}Y_{n}^{1+\frac pN\frac {\widetilde{\lambda}}{\widetilde{\delta}}}}{\Theta }$$
where $$
A=2^{\widetilde{\lambda}+\frac{m\widetilde{\sigma}}{\widetilde{\delta}}}\geq 1\quad\textnormal{and}\quad\Theta =ct^{\frac{\widetilde{\lambda}}{\widetilde{\delta}}}
k^{\widetilde{\lambda}}
\Lambda_p(k^{-mr}E_r(t))^{\frac{\widetilde{\lambda}}{\widetilde{\delta}}}
Y_0^{\frac pN\frac{\widetilde{\lambda}}{\widetilde{\delta}}}.
$$
Hence, if 
\begin{equation}
	\Theta \geq A^{\frac {N\widetilde{\delta}}{p\widetilde{\lambda}}}Y_{0}^{\frac pN\frac{\widetilde{\lambda}}{\widetilde{\delta}}},  \label{Thetayyy2}
\end{equation}%
then, for all $n\geq 0,$ 
$
	Y_{n}\leq A^{-n\frac {N\widetilde{\delta}}{p\widetilde{\lambda}}}Y_{0}.
$
The condition (\ref{Thetayyy2}) is equivalent%
\begin{equation*}
	ct^{\frac{\widetilde{\lambda}}{\widetilde{\delta}}}
	k^{\widetilde{\lambda}}
	\Lambda_p(k^{-mr}E_r(t))^{\frac{\widetilde{\lambda}}{\widetilde{\delta}}}
	Y_0^{\frac pN\frac{\widetilde{\lambda}}{\widetilde{\delta}}}\geq A^{\frac {N\widetilde{\delta}}{p\widetilde{\lambda}}}Y_{0}^{\frac pN\frac{\widetilde{\lambda}}{\widetilde{\delta}} },
\end{equation*}%
that is,%
\begin{equation*}
	k\geq C
	t^{-\frac1{\widetilde{\delta}}}
	\Lambda_p(k^{-mr}E_r(t))^{-\frac1{\widetilde{\delta}}}
	.
\end{equation*}%
Let us choose $k$ to have equality here.
Consider the function
$
\Psi(s)
=
s^{\frac{\widetilde{\delta}}{r}}
\Lambda_p(s^{-m})=\psi_r(s^m),
$
and let
$
\Phi
=
\Psi^{-1}.
$
Then, by our choice of $k$,
\[
k
=
E_r(t)^{1/(mr)}
\Phi
\left(
C^{-\tilde{\delta}}
t^{-1}
E_r(t)^{-\frac\delta r}
\right)^{1/r}\leq C'
E_r(t)^{1/(mr)}
\Phi
\left(
t^{-1}
E_r(t)^{-\widetilde{\delta}/(mr)}
\right)^{1/r}.
\]
For this $k $ we
obtain $Y_{n}\rightarrow 0$ as $n\rightarrow \infty $, so that $$w(x, t)\leq  C'
E_r(t)^{1/(mr)}
\Phi
\left(
t^{-1}
E_r(t)^{-\frac\delta r}
\right)^{1/r},$$ that is, $$u(x, t)\leq C 'E_r(t)^{1/r}
\Phi
\left(
t^{-1}
E_r(t)^{-\frac\delta r}
\right)^{1/(qr)}.
$$ 
Finally, the identity $\Phi(s)^{m}=\varphi_r(s)$ implies that (\ref{MV}) also holds in this case.\end{proof}
    
    \subsection{Propagation of Finite Support Initial Data}
    
    \begin{lemma}\label{lemdecreasing}
    Assume that $\delta\geq0$ and set $\theta=\frac{N\delta}
    {(N\delta+p)(p-1)}$.
    Then the function
    $$
    \tau
    \mapsto
    \tau^\theta
    \,
    \varphi_1
    \!\left(
    \tau^{-1}b
    \right)^{\frac{\delta}{p-1}}
    $$
    is nondecreasing on $(0, \infty)$ for every $b>0$.
    \end{lemma}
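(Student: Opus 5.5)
The plan is to reduce the claim to a growth estimate for $\varphi_1$, which I expect to follow from the first monotonicity assumption in \eqref{assumponLamb}. First dispose of the degenerate case $\delta=0$. There $\theta=0$ and the exponent $\frac{\delta}{p-1}$ vanishes, so the function is identically $1$ and there is nothing to prove. So assume $\delta>0$; then $\psi_1(s)=s^{\delta}\Lambda_p(s^{-1})$ is a strictly increasing bijection of $(0,+\infty)$ with inverse $\varphi_1$, as noted in the introduction.

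The key step is a doubling-type estimate for $\psi_1$. By \eqref{assumponLamb}, $v\mapsto \Lambda_p(v)^{-1}v^{-p/N}$ is nondecreasing. Substituting $v=s^{-1}$, the map $s\mapsto \Lambda_p(s^{-1})^{-1}s^{p/N}$ is nonincreasing, so its reciprocal $s\mapsto \Lambda_p(s^{-1})s^{-p/N}$ is nondecreasing. Hence
$$
s\mapsto \frac{\psi_1(s)}{s^{\delta+p/N}}=\Lambda_p(s^{-1})\,s^{-p/N}
$$
is nondecreasing. This gives $\psi_1(s_2)\ge (s_2/s_1)^{\delta+p/N}\psi_1(s_1)$ for $0<s_1<s_2$.

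Next I would invert this for $\varphi_1$. Given $0<y_2<y_1$, put $s_i=\varphi_1(y_i)$, so $s_2<s_1$ since $\varphi_1$ is increasing. The previous inequality gives $y_1/y_2\ge (s_1/s_2)^{\delta+p/N}$, that is,
$$
\frac{\varphi_1(y_1)}{\varphi_1(y_2)}\le \Bigl(\frac{y_1}{y_2}\Bigr)^{\frac{N}{N\delta+p}} .
$$

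Finally, take $0<\tau_1<\tau_2$ and apply this with $y_i=b/\tau_i$, so that $y_1/y_2=\tau_2/\tau_1$. Raising to the power $\frac{\delta}{p-1}>0$ and noting $\frac{N}{N\delta+p}\cdot\frac{\delta}{p-1}=\theta$, we get
$$
\varphi_1(b/\tau_1)^{\frac{\delta}{p-1}}\le \Bigl(\frac{\tau_2}{\tau_1}\Bigr)^{\theta}\varphi_1(b/\tau_2)^{\frac{\delta}{p-1}} .
$$
Multiplying by $\tau_1^\theta$ yields the claimed monotonicity. The only delicate point I expect is keeping the directions of the monotonicity straight under the substitution $v=s^{-1}$ and under inversion. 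The second condition in \eqref{assumponLamb} (with $\nu$) is not needed here.
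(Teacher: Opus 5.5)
Your proof is correct and rests on the same key fact as the paper's. Both use that $s\mapsto s^{-p/N}\Lambda_p(s^{-1})=\psi_1(s)\,s^{-(\delta+p/N)}$ is nondecreasing, together with the monotonicity of $\varphi_1$. The paper packages this as the one-line identity $r^{-\theta(p-1)}\varphi_1(r)^{\delta}=\bigl(s^{-p/N}\Lambda_p(s^{-1})\bigr)^{-\theta(p-1)}$ with $s=\varphi_1(r)$, whereas you unfold it into two-point ratio inequalities; the content is the same.
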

    
    \begin{proof}
    Our claim is equivalent to showing that
    $
    r
    \mapsto
    r^{-\theta(p-1)}
    \varphi_1(r)^{\delta}
    $
    is nonincreasing.
    Let
    $
    s=\varphi_1(r).
    $
    Since
    $
    r=\psi_1(s),
    $
    we obtain
    \[
    r^{-\theta(p-1)}
    \varphi_1(r)^{\delta}
    =
    s^{(1-\theta(p-1))\delta}
    \Lambda_p(s^{-1})^{-\theta(p-1)}=\left(s^{-p/N}\Lambda_p(s^{-1})\right)^{-\theta(p-1)}.
    \]
    Using the assumption that
    $v^{-p/N}
    \Lambda_p(v^{-1})
    $
    is nondecreasing, the claim follows.
    \end{proof}

    The next result contains Theorem \ref{thm:propagation-finite-supportint} from Introduction.
    
    \begin{theorem}\label{thm:propagation-finite-support}
    	Assume graph $G$ satisfies the Faber--Krahn inequality and that
    	$q(p-1)>1$.	
    	Assume that $u_0\in \ell^1(V)$ be nonnegative and have its support contained in a ball $B_{R_{0}}$. Let $u\in L^\infty([0,T);\ell^1(V))$
    	be a solution of (\ref{eq:infinite-parabolic}).
    	Then for every $0<\varepsilon<1$ there
    		exists a constant $c>0$ such that
    		\begin{equation}\label{lowerforl1}\|u(t)\|_{\ell^1(B_R)}
    		\ge
    		(1-\varepsilon)
    		\|u_0\|_{\ell^1(V)}\end{equation}
    		whenever
    		\[
    		R
    		\ge
    		\max
    		\left\{
    		\frac{c}{\varepsilon}
    		t^{1/p}
    		\|u_0\|_{\ell^1(V)}^{\frac{\delta}{p}}
    		\varphi_1
    		\!\left(
    		t^{-1}
    		\|u_0\|_{\ell^1(V)}^{-\delta}
    		\right)^{\frac{\delta}{p}},
    		2R_0
    		\right\}.
    		\]
    \end{theorem}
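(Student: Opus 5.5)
Using \cref{thm:mass-conservation} (valid since $q(p-1)>1$), the total mass $\sum_x u(x,t)\mu(x)=\|u_0\|_{\ell^1(V)}$ is conserved and $u\ge0$ by \cref{rem:nonnegative-alternative}. Hence \eqref{lowerforl1} is equivalent to showing that the mass outside $B_R$ is at most $\varepsilon\|u_0\|_{\ell^1(V)}$. I will test the equation with a cutoff vanishing on $B_{R/2}$ and growing to $1$ outside $B_R$. The flux term will be controlled by the $\ell^\infty$ decay of \cref{thm:upper-bound} and an energy bound of the type in \cref{lem:beta-energy}. Write $M=\|u_0\|_{\ell^1(V)}$.

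\textbf{Setup.} Let $\xi=1-\zeta_{R/2,R}$ with $\zeta$ as in \eqref{cutoff}, so $\xi=0$ on $B_{R/2}\supset B_{R_0}$ (using $R\ge 2R_0$) and $|\nabla_{xy}\xi|\le 2/R$. Multiplying the equation by $\xi\mu$ and using \cref{lem:Green-formula} (after truncating $\xi$ at a large radius and letting it go, as in the proof of \cref{thm:mass-conservation}), we get
\[
\sum_{x}u(x,t)\xi(x)\mu(x)\le \frac{1}{R}\int_0^t\sum_{x,y}|\nabla_{xy}u^q|^{p-1}\omega_{xy}\,d\tau .
\]
The goal is to bound the right side by $\varepsilon M$ once $R$ exceeds the stated threshold, since then $\|u(t)\|_{\ell^1(B_R)}\ge M-\sum_x u\xi\mu\ge(1-\varepsilon)M$.

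\textbf{Flux estimate.}
1. Use \cref{cor:p-q-beta-inequality} with some $\beta<q$ to write $|\nabla u^q|^{p-1}\lesssim (u(x)^{(q-\beta)(p-1)}+u(y)^{(q-\beta)(p-1)})|\nabla u^\beta|^{p-1}$.
2. Apply H\"older in $(x,y,\tau)$ to get the factor $\bigl(\int\sum|\nabla u^\beta|^p\tau^{\kappa}\bigr)^{(p-1)/p}$ times $\bigl(\int\sum u^{p(q-\beta)(p-1)}\tau^{-\kappa(p-1)}\mu\bigr)^{1/p}$, for a time weight $\tau^{\kappa}$ still to be chosen.
3. Choose $\beta$ so that $p(q-\beta)(p-1)\ge1$. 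Then the second factor is bounded by $\int \|u(\tau)\|_\infty^{p(q-\beta)(p-1)-1}M\tau^{-\kappa(p-1)}d\tau$, with $\|u(\tau)\|_\infty$ taken from \cref{thm:upper-bound} with $r=1$.
4. Bound the energy factor via the identity for $\frac{d}{dt}\sum u^{1+\epsilon}$ (as in \cref{lem:beta-energy}), weighted in time and again using the $\ell^\infty$ decay.

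I expect the natural choice is $\beta$ with $\beta p=q(p-1)+\epsilon$ for small fixed $\epsilon$. Then $p(q-\beta)(p-1)$ is close to $q(p-1)-\,$something, and the exponents combine so that, after inserting $\|u(\tau)\|_\infty\le C M\varphi_1(\tau^{-1}M^{-\delta})$, the flux is $\lesssim R^{-1}\,t^{1/p}M^{1+\delta/p}\varphi_1(t^{-1}M^{-\delta})^{\delta/p}$. Requiring this to be $\le\varepsilon M$ gives exactly the threshold in the statement.

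\textbf{Main obstacle.} The main difficulty is the time integrals $\int_0^t(\cdot)d\tau$ of powers of $\varphi_1(\tau^{-1}M^{-\delta})$. These must be dominated by their value at $\tau=t$ (times $t$). This is the role of \cref{lemdecreasing}: writing the integrand as $\tau^{-\theta'}\cdot\bigl(\tau^\theta\varphi_1(\tau^{-1}b)^{\delta/(p-1)}\bigr)^{\text{power}}$ with $\theta'<1$, monotonicity gives
\[
\int_0^t\tau^{-\theta'}(\cdots)\,d\tau\le C\,t^{1-\theta'}(\cdots)|_{\tau=t}.
\]
Making the exponents match so that integrability at $0$ holds is the delicate bookkeeping. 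If the weighted energy route proves awkward, I would instead use a dyadic decomposition in time: on each $[2^{-j-1}t,2^{-j}t]$, apply the Caccioppoli inequality (\cref{lemcacc} with $h=0$, after reducing to $q\ge1$ via $w=u^q$ as in the proof of \cref{thm:upper-bound}) together with the $\ell^\infty$ bound, and then sum the resulting geometric series, whose convergence is guaranteed by \cref{lemdecreasing}.
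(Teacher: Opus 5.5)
This is essentially the paper's argument: a cutoff vanishing on $B_{R/2}$, H\"older with a time weight, a weighted energy identity from testing with $t^{\kappa}u^{\epsilon}$, the $\ell^\infty$ bound of \cref{thm:upper-bound}, and \cref{lemdecreasing}; the paper simply fixes the endpoint $\epsilon=\delta/(p-1)=q(\vartheta-1)$, so the lower-order H\"older factor is a pure $\ell^1$ quantity and the admissible window becomes $\theta<\sigma<\frac{1}{p-1}$. The bookkeeping you left open does close for any fixed $0<\epsilon\le\delta/(p-1)$: integrability at $\tau=0$ requires $\frac{\theta\epsilon(p-1)}{\delta}<\kappa<\frac{1}{p-1}-\frac{\theta(\delta-\epsilon(p-1))}{\delta}$, which is a nonempty interval since $\theta<\frac{1}{p-1}$, and the powers then combine to $t^{1/p}M^{1+\delta/p}\varphi_1(t^{-1}M^{-\delta})^{\delta/p}$ for every such $\epsilon$ (for a general, non-exhaustion solution you should justify the weighted identity with a spatial cutoff rather than by citing \cref{lem:beta-energy}, which is stated only for exhaustion solutions).
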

    
    \begin{proof}
    Fix a vertex
    $
    x_0\in V.
    $ and let $
    B_R=B_R(x_0) 
    $ and
    $|x|=d(x,x_0)$.
    Let $R\geq 2 R_{0}$ so that 
    $\operatorname{supp}u_0\subset B_{R_0}\subset B_{R/2}$.
    Define $\varphi(x)
    =
    1-\zeta_{R/2,R}(x)$ and choose $
    \rho>4R$. 
    Multiplying both sides of the equation in (\ref{eq:infinite-parabolic}) by
    $
    \varphi(x)\zeta_{\rho,2\rho}(x)\mu(x)
    $, summing over $x$ and then integrating in $t$ yields
    \begin{align*}
    &\sum_{x\in V}
    \varphi(x)
    \zeta_{\rho,2\rho}(x)
    u(x,\tau)\mu(x)\\&=-\frac12
    \int_0^\tau
    \sum_{x,y\in V}
    |\nabla_{xy}(u^q(\cdot,t))|^{p-2}
    \nabla_{xy}(u^q(\cdot,t))
    \nabla_{xy}\!\left(
    \varphi(x)\zeta_{\rho,2\rho}(x)
    \right)
    \omega_{xy}\,dt.
    \end{align*}
    Notice that
    $
    \left|
    \nabla_{xy}\!\left(
    \varphi(x)\zeta_{\rho,2\rho}(x)
    \right)
    \right|
    \le
    |\nabla_{xy}\varphi(x)|
    \zeta_{\rho,2\rho}(y)
    +
    \varphi(x)
    |\nabla_{xy}\zeta_{\rho,2\rho}(x)|
    $.
    Hence,
    \[
    \left|
    \nabla_{xy}\!\left(
    \varphi(x)\zeta_{\rho,2\rho}(x)
    \right)
    \right|
    \le
    \frac{2}{R}
    +
    \frac{1}{\rho}.
    \]
    Thus,
    \begin{align*}
    \sum_{x\in V}
    \varphi(x)
    \zeta_{\rho,2\rho}(x)
    u(x,\tau)\mu(x)
    &\le
    \left[
    \frac{1}{R}
    +
    \frac{1}{2\rho}
    \right]
    \int_0^\tau
    \sum_{x,y\in V}
    |\nabla_{xy}(u^q(\cdot,t))|^{p-1}
    \omega_{xy}\,dt
    \\ &\le
    \frac{2}{R}
    \int_0^\tau
    \sum_{x,y\in V}
    |\nabla_{xy}(u^q(\cdot,t))|^{p-1}
    \omega_{xy}\,dt.
    \end{align*}
    Letting $\rho\to+\infty$, we obtain
    \[
    \sum_{x\notin B_{R}}
    u(x,\tau)\mu(x)
    \le
    \frac{2}{R}
    \int_0^\tau
    \sum_{x,y\in V}
    |\nabla_{xy}(u^q(\cdot,t))|^{p-1}
    \omega_{xy}\,dt.
    \]
    Let $\sigma>0$ be a constant to be chosen and define
    $
    \vartheta
    =
    2-\frac{1}{\delta+1}$.
    By Hölder's inequality,
    \begin{align*}
    \int_0^\tau
    \sum_{x,y\in V}
    |\nabla_{xy}u^q(x,t)|^{p-1}
    \omega_{xy}\,dt
    &\le
    \left(
    \int_0^\tau
    \sum_{x,y\in V}
    t^{-\sigma(p-1)}
    |u^q(x,t)+u^q(y,t)|^{(2-\vartheta)(p-1)}
    \omega_{xy}\,dt
    \right)^{1/p}
    \\&\times\left(
    \int_0^\tau
    \sum_{x,y\in V}
    t^\sigma
    |\nabla_{xy}u^q(x,t)|^p
    |u^q(x,t)+u^q(y,t)|^{\vartheta-2}
    \omega_{xy}\,dt
    \right)^{(p-1)/p}\\&=:I_2^{1/p}\times I_{3}^{(p-1)/p}.
    \end{align*}
    Choose $\sigma$ so that
    $
    \sigma(p-1)<1.
    $
    Then there exists a constant $C_1$ such that
    \[
    I_2
    \le
    2^{1/q+1}
    \int_0^\tau
    t^{-\sigma(p-1)}
    \|u(t)\|_{\ell^1(V)}\,dt
    \le
    C_1
    \|u_0\|_{\ell^1(V)}
    \tau^{1-\sigma(p-1)}.
    \]
    
    Multiply both sides of the equation in (\ref{eq:infinite-parabolic}) by
    $
    t^\sigma u^{q(\vartheta-1)}\mu(x),
    $
    integrate over $[0,\tau]$, and sum over $x$, we can obtain
    \begin{align*}
    \frac{1}{q(\vartheta-1)+1}
    &\sum_{x\in V}
    \mu(x)
    \left(
    \tau^\sigma u(x,\tau)^{q(\vartheta-1)+1}
    -
    \sigma
    \int_0^\tau
    t^{\sigma-1}
    u(x,t)^{q(\vartheta-1)+1}\,dt
    \right)
    \\&=
    -\frac12
    \int_0^\tau
    \sum_{x,y\in V}
    t^\sigma
    |\nabla_{xy}u^q(x,t)|^{p-1}
    \left|
    \nabla_{xy}\bigl(u(x,t)^{q(\vartheta-1)}\bigr)
    \right|
    \omega_{xy}\,dt.
    \end{align*}
    Rigorously, this test is justified by applying the spatial cutoff $\zeta_{L,2L}$ and integrating in time over $[\eta, \tau]$ with $\eta>0$. For $q(\vartheta-1)<1$, we avoid potential singularities by substituting $u^{q(\vartheta-1)}$ with
    $(u+h)^{q(\vartheta-1)}-h^{q(\vartheta-1)}$. The conclusion follows by taking the limits $L\to\infty$, $\eta\to0^+$, and $h\to0^+$.
    
    Hence, by Theorem \ref{thm:mass-conservation} and Theorem \ref{thm:upper-bound},
    \begin{align*}
    I_3
    &\le
    C\int_0^\tau
    \sum_{x,y\in V}
    t^\sigma
    |\nabla_{xy}u^q(x,t)|^{p-1}
    \left|
    \nabla_{xy}\bigl(u(x,t)^{q(\vartheta-1)}\bigr)
    \right|
    \omega_{xy}\,dt
    \\&\le
    C'
    \int_0^\tau
    \sum_{x\in V}
    t^{\sigma-1}
    u(x,t)^{q(\vartheta-1)+1}
    \mu(x)\,dt
    \\&\le
    C'\|u_0\|_{\ell^1(V)}
    \int_0^\tau
    t^{\sigma-1}
    \|u(t)\|_{\ell^\infty(V)}^{q(\vartheta-1)}
    \,dt.
    \\&\le
    C'
    \|u_0\|_{\ell^1(V)}^{q(\vartheta-1)+1}
    \int_0^\tau
    t^{\sigma-1}
    \varphi_1
    \left(
    t^{-1}
    \|u_0\|_{\ell^1(V)}^{-\delta}
    \right)^{q(\vartheta-1)}
    \,dt.
    \end{align*}
    Further, let us require that
    $\theta<\sigma<\frac1{p-1},$
    where $\theta$ is the constant defined in Lemma \ref{lemdecreasing}. Then we obtain by Lemma \ref{lemdecreasing},
    \[
    \int_0^\tau
    t^{\sigma-\theta-1}
    t^\theta
    \varphi_1
    \left(
    t^{-1}
    \|u_0\|_{\ell^1(V)}^{-\delta}
    \right)^{q(\vartheta-1)}
    \,dt
    \le
    \tau^\theta
    \varphi_1
    \left(
    \tau^{-1}
    \|u_0\|_{\ell^1(V)}^{-\delta}
    \right)^{q(\vartheta-1)}
    (\sigma-\theta)^{-1}
    \tau^{\sigma-\theta}.
    \]
    Combining the estimates for $I_2$ and $I_3$, we obtain
    \[
    \sum_{x\notin B_{R}}
    u(x,\tau)\mu(x)
    \le
    C\frac{1}{R}
    \tau^{1/p}
    \|u_0\|_{\ell^1(V)}^{1+\frac{\delta}{p}}
    \varphi_1
    \left(
    \tau^{-1}
    \|u_0\|_{\ell^1(V)}^{-\delta}
    \right)^{\frac{\delta}{p}}.
    \]
    Therefore,
    \[
    \|u(\tau)\|_{\ell^1(B_{R})}
    \ge
    (1-\varepsilon)\|u_0\|_{\ell^1(V)}
    \]
    provided
    \[
    R
    \ge
    \frac{c}{\varepsilon}
    \tau^{1/p}
    \|u_0\|_{\ell^1(V)}^{\frac{\delta}{p}}
    \varphi_1
    \left(
    \tau^{-1}
    \|u_0\|_{\ell^1(V)}^{-\delta}
    \right)^{\frac{\delta}{p}},
    \]
    which completes the proof.
    \end{proof}
    
	\section{Finite Time Extinction}\label{sec:conservation-extinction}
	
	We define the following edge seminorm for a function $f:V\to \mathbb{R}$ as follows:
	$$
	\Vert f\Vert_{\ell^p(E)}:=\left(\frac{1}{2}\sum_{x,y\in V}|f(x)-f(y)|^p\omega_{xy}\right)^{\frac{1}{p}}.
	$$
	The following Sobolev inequality is stated in \cite[Theorem~3.3]{Hua1} for $d\geq2$.
    Since its proof only uses $d>1$ and $1\leq p<d$, the same argument yields the following slightly general result.
	\begin{lemma}\label{thm:sobolev-isoperimetric}
		Assume that the graph $G$ satisfies the $d$-isoperimetric inequality for some $d>1$. Then there exists a constant $C=C(d,C_d)$ such that for all $p\in [1,d)$ and all functions $f$ with finite support on $V$,
		\begin{equation}
		    \Vert f\Vert_{\ell^{p^*}(V)}\leq C\frac{p}{d-p}\Vert f\Vert_{\ell^p(E)},
		\end{equation}
		where $p^*:=\frac{dp}{d-p}>p$ and $C_d$ is the constant mentioned in the definition of $d$-isoperimetric inequality.
	\end{lemma}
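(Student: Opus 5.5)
We prove the Sobolev inequality in two stages: first the case $p=1$ from the $d$-isoperimetric inequality via a coarea (layer-cake) argument, and then general $p\in[1,d)$ by applying the $p=1$ case to a suitable power of $|f|$, exactly as in the classical Federer--Fleming/Maz'ya scheme. Throughout we may replace $f$ by $|f|$, since $\bigl||f(x)|-|f(y)|\bigr|\le|f(x)-f(y)|$, so we assume $f\ge0$ with finite support.

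\emph{Step 1 ($p=1$).} For $f\ge0$ finitely supported and $t>0$ set $\Omega_t=\{f>t\}$, a finite set. The discrete coarea formula gives
$$
\frac12\sum_{x,y\in V}|f(x)-f(y)|\,\omega_{xy}=\int_0^\infty \mu_E(\partial\Omega_t)\,dt,
$$
since an edge $\{x,y\}$ lies in $\partial\Omega_t$ exactly for $t$ between $f(x)$ and $f(y)$. The isoperimetric inequality yields $\|f\|_{\ell^1(E)}\ge C_d^{-1}\int_0^\infty \mu_V(\Omega_t)^{(d-1)/d}\,dt$. It remains to show $\|f\|_{\ell^{d/(d-1)}(V)}\le \int_0^\infty \|\mathbf 1_{\Omega_t}\|_{\ell^{d/(d-1)}(V)}\,dt$. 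This holds by Minkowski's integral inequality applied to $f=\int_0^\infty \mathbf 1_{\Omega_t}\,dt$, and it uses only $d/(d-1)\ge1$, i.e.\ $d>1$. Hence $\|f\|_{\ell^{d/(d-1)}}\le C_d\|f\|_{\ell^1(E)}$.

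\emph{Step 2 (general $p$).} Let $1<p<d$ and put $g=f^{\gamma}$ with $\gamma=\frac{p(d-1)}{d-p}>1$, chosen so that $\gamma\frac{d}{d-1}=p^*$. Applying Step 1 to $g$ and using Lemma \ref{lem:power-lipschitz},
$$
|f(x)^\gamma-f(y)^\gamma|\le\gamma|f(x)-f(y)|\bigl(f(x)^{\gamma-1}+f(y)^{\gamma-1}\bigr).
$$
Hölder's inequality with exponents $p,p'$ then gives
$$
\Bigl(\sum_x f^{p^*}\mu\Bigr)^{\frac{d-1}{d}}\le C_d\,\gamma\,\|f\|_{\ell^p(E)}\Bigl(\tfrac12\sum_{x,y}(f(x)^{\gamma-1}+f(y)^{\gamma-1})^{p'}\omega_{xy}\Bigr)^{1/p'}.
$$
Since $(\gamma-1)p'=p^*$ and $\sum_y\omega_{xy}=\mu(x)$, the last factor is at most $2\bigl(\sum_x f^{p^*}\mu\bigr)^{1/p'}$. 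As $f$ is finitely supported, all quantities are finite, so we may divide. The exponent $\frac{d-1}{d}-\frac{p-1}{p}=\frac{d-p}{dp}=\frac1{p^*}$, and therefore $\|f\|_{\ell^{p^*}}\le 2C_d\gamma\|f\|_{\ell^p(E)}$.

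\emph{Constant.} Since $\gamma=\frac{p(d-1)}{d-p}\le (d-1)\frac{p}{d-p}$, the constant has the claimed form $C(d,C_d)\frac{p}{d-p}$. The case $p=1$ is covered by Step 1, with $\frac{p}{d-p}=\frac1{d-1}$ absorbed into $C$.

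The only point requiring care is the Minkowski step in Step 1 and the verification that nothing uses $d\ge2$. Both steps need only $d/(d-1)\ge1$ and $p<d$, so the argument goes through for all $d>1$, as claimed.
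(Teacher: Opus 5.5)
Your proof is correct. The coarea formula with Minkowski's inequality gives the $p=1$ case with constant $C_d$, and the substitution $g=|f|^{\gamma}$ with $\gamma=\frac{p(d-1)}{d-p}$, followed by H\"older, yields $\|f\|_{\ell^{p^*}(V)}\le 2C_d(d-1)\frac{p}{d-p}\|f\|_{\ell^p(E)}$ uniformly for $p\in[1,d)$, using only $d>1$. The paper gives no separate proof; it defers to the argument of Hua and Mugnolo and notes that it needs only $d>1$ and $p<d$, and your standard Federer--Fleming/Maz'ya scheme is that same argument.
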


	\begin{proof}[Proof of \cref{thm:finite-time-extinction-infinite}]
    
        \textbf{Case 1.} $q>1$.
		Let $m=\frac{d(1-q(p-1))}{p}$, then we have $m>1$. By assumption, $u \in L^{\infty}([0,+\infty);\ell^r(V))$ is a solution of \eqref{eq:infinite-parabolic} with $u_0\in\ell^{m}(V)$. Then for all $t\geq 0$ and $k>0$, the function $(u(\cdot,t)-k)_{+}$ has finite support.
		
		For a given vertex $x\in V$, the following identity holds for
        almost every $t$:
		$$
		\frac{d}{dt}(u(x,t)-k)^m_{+}=m(u(x,t)-k)^{m-1}_{+}\frac{d}{dt}(u(x,t)-k)_{+}=m(u(x,t)-k)^{m-1}_{+}\frac{du}{dt}(x,t).
		$$
		Let $g(x,t):=(u-k)_{+}$ and $m'=\frac{m+q-1}{q}$. Since $|\nabla_{xy}(u^q(\cdot,t))|\geq |\nabla_{xy}(g^q(x,t))|$ when $q\geq 1$, we have
		\begin{align*}
			\frac{d}{dt} \sum_{x\in V}(u(x,t)-k)^m_{+}\mu(x) 
			&=m \sum_{x\in V}(u(x,t)-k)^{m-1}_{+}\sum_{y \sim x}\left(\nabla_{xy}(u^q(\cdot,t))\right)^{p-1}\omega_{xy}\\
			&=-\frac{m}{2}\sum_{x,y\in V}|\nabla_{xy}(u^q(\cdot,t))|^{p-1}\left|\nabla_{xy}\left((u(x,t)-k)^{m-1}_{+}\right)\right|\omega_{xy}\\
			&\leq -\frac{m}{2}\sum_{x,y\in V}|\nabla_{xy}(g^q(x,t))|^{p-1}\left|\nabla_{xy}\left(g^{m-1}(x,t)\right)\right|\omega_{xy}\\
			&\leq -\frac{m c(p,q,m)}{2}\sum_{x,y\in V} \left|\nabla_{xy}\left(g^{qs}(x,t)\right)\right|^p\omega_{xy}\\
			&= -mc(p,q,m)\ \Vert g^{qs}(\cdot,t)\Vert_{\ell^p(E)}^p.
		\end{align*}
		The last inequality follows from
        \cref{lem:power-difference} with $\varepsilon=m-1$. Here $s:=\frac{m'+p-2}{p}$ and $c(p,q,m)$ is a constant depending only on $p$, $q$, and $m$. Hence by \cref{thm:sobolev-isoperimetric}, there exists a constant $C'$ such that
		$$
		\begin{aligned}
			\frac{d}{dt} \sum_{x\in V}(u(x,t)-k)^{m}_{+}\mu(x)&\leq -C'\left(\sum_{x\in V}g^{qsp^*}(x,t)\mu(x)\right)^{\frac{p}{p^*}}\\
			&\leq -C'\left(\sum_{x\in V}(u(x,t)-k)^{qs p^*}_{+}\mu(x)\right)^{\frac{p}{p^*}},
		\end{aligned}
		$$
		where $p^*=\frac{dp}{d-p}$. Since $m=\frac{d(1-q(p-1))}{p}$ and $m'=\frac{m+q-1}{q}$, we have
        $$
        qs p^*=\frac{q\bigl(m'+p-2\bigr)}{p} \frac{dp}{d-p}
        =
        \frac{d\bigl(m+q(p-1)-1\bigr)}{d-p}
        =m.
        $$
		Let $E_{m,k}(t):=\sum_{x\in V}(u(x,t)-k)^m_{+}\mu(x)$. We then obtain that
		$$
		E_{m,k}'(t)+C'E_{m,k}(t)^{\frac{p}{p^*}}\leq 0.
		$$ 
		Integrating over $t$, we have
		$$
		E_{m,k}(t)\leq\left(E_{m,k}(0)^{p/d}-\frac{C'p}{d}t\right)^{d/p}\quad \text{when}\quad E_{m,k}(t)>0.
		$$
		Let $t_{0,k}:=\frac{d}{C'p}E_{m,k}(0)^{p/d}$, since $E_{m,k}(t)$ is nonnegative and nonincreasing, we conclude that $E_{m,k}(t)=0$ for every $t\geq t_{0,k}$, which implies that $u(x,t)\leq k$ for every $t\geq t_{0,k}$. Let 
		$$
		t_0 :=\lim\limits_{k\to 0}t_{0,k}=\frac{d}{C'p}{\Vert u_0\Vert_{\ell^m(V)}^{mp/d}}.
		$$
		Then, we can see that $u(x,t)\leq 0$ for every $t\geq t_0$. Since $-u$ is also a solution of \eqref{eq:infinite-parabolic}, we have $-u(x,t)\leq 0$. Thus, $u(x,t)\equiv 0$ for every $t\geq t_{0}$, which completes the proof.

        \textbf{Case 2.} $0 < q \leq 1$ and $u$ is an exhaustion solution. Fix $x_0\in V$. Without loss of generality, we may assume that $u(x,t)=\lim_{n\to\infty} u_{n}(x,t)$ for all $(x,t)\in V\times[0,+\infty)$, where each $u_n:V\times[0,\infty)\to \mathbb{R}$ is a solution of the mixed problem \eqref{eq:finite-ball-problem}.

        Let $m=\frac{d(1-q(p-1))}{p}$. Define
        $$
        E_{m,n}(t):=\sum_{x\in V}|u_n(x,t)|^m\mu(x).
        $$
        Since $u_n$ has finite support, after differentiation we obtain that
        $$
        \frac1m E_{m,n}'(t)
        =
        \sum_{x\in V}u_n(x,t)^{m-1}\frac{\partial u_n}{\partial t}(x,t)\mu(x).
        $$
        Using the equations in \eqref{eq:finite-ball-problem} and summing over $x\in V$ yields
        $$
        \frac1m E_{m,n}'(t)
        =
        -\frac12\sum_{x,y\in V}
        \left(\nabla_{xy}(u_n^q)(x,t)\right)^{p-1}\,
        \nabla_{xy}(u_n^{m-1})(x,t)\,\omega_{xy}.
        $$
        Applying the same argument as in Case 1, we obtain
        $$
        E_{m,n}'(t)
        \le
        -\frac{mc(p,q,m)}{2}
        \sum_{x,y\in V}\left|\nabla_{xy}(u_n^{qs})(x,t)\right|^p\omega_{xy}=-mc(p,q,m)\|u_n^{qs}(\cdot,t)\|^p_{\ell^p(E)},
        $$
        where $s=\frac{m'+p-2}{p}$.
        By \cref{thm:sobolev-isoperimetric}, there exists a constant $C'$ such that
        $$
        \begin{aligned}
            E_{m,n}'(t)&\leq -mc(p,q,m)\|u_n^{qs}(\cdot,t)\|^p_{\ell^p(E)}\\
            &\leq -
            C'
            \left(\sum_{x\in V}|u_n(x,t)|^{qs p^*}\mu(x)\right)^{p/p^*}=-C'E_{m,n}(t)^{p/p^*}.
        \end{aligned}
        $$
        Again, by a similar argument as in Case 1, we can obtain that $E_{m,n}(t)=0$ for all $t\geq t_{0,n}$, where
        $$
        t_{0,n}=\frac{d}{C'p}E_{m,n}(0)^{p/d}.
        $$
        Let $t_0:=\frac{d}{C'p}{\Vert u_0\Vert_{\ell^m(V)}^{mp/d}}$. Since for all $n\in\mathbb{N}$, $E_{m,n}(0)\leq \|u_0\|_{\ell^m(V)}^{m}$, it follows that each $u_n$ vanishes identically for all $t\geq t_0$. Therefore, the exhaustion solution $u$ obtained as pointwise limit of $\{u_n\}_n$ must also vanish identically for all $t\geq t_0$. This completes the proof.
	\end{proof}

	\section{A Classification of Solutions on Finite Graphs}\label{sec:finite-graph-classification}
    Now assume that $G=(V,E,\omega)$ is a finite graph and fix a proper subset $\Omega\subsetneq V$ such that the induced subgraph $G[\Omega]$ is connected. We consider the following equation:
    \begin{equation}\label{eq:finite-parabolic}
    \begin{cases}
        \dfrac{\partial u}{\partial t}(x,t) = \Delta_p u^q(x,t), & x\in\Omega,\ t>0,\\[6pt]
        u(x,0) = u_0(x), & x\in\Omega,\\[6pt]
        u(x,t) = 0, & x\in V\setminus\Omega,\ t\ge 0,
    \end{cases}
    \end{equation}
    where $u_0$ is a nonnegative function on $\Omega$ that is not identically zero. As before, the graph $G$ is always assumed to be connected, without multiple edges or self-loops, and we always take $p>1$ and $q>0$.
    
    \begin{definition}\label{def:finite-graph-solution}
        Let $G$ be a finite graph. A function $u: V\times[0,+\infty)\to\mathbb{R}$ is called a solution of \eqref{eq:finite-parabolic} if $u(x,\cdot)\in C^1([0,+\infty))$ for every $x\in V$ and $u$ satisfies \eqref{eq:finite-parabolic} pointwise.
    \end{definition}
    
    The case $q=1$ was studied in \cite{Lee}, where it was shown that the solution vanishes in finite time when $p<2$, while it remains strictly positive on $\Omega$ for $p\ge 2$. In this section, we show that the same dichotomy holds for all $q>0$. 
    \begin{theorem}\label{thm:finite-graph-dichotomy}
    Let $u$ be a solution of \eqref{eq:finite-parabolic}. 
    \begin{enumerate}[(i)]
        \item If $q(p-1)<1$, then there exists a finite time $t_0>0$ such that $u(x,t)=0$ for all $x\in V$ and all $t\ge t_0$.
        \item If $q(p-1)\ge 1$, then $u(x,t)>0$ for all $x\in\Omega$ and all $t>0$.
    \end{enumerate}
    \end{theorem}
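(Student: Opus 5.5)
We first record two preliminary facts, both proved as in Section \ref{sec:existence-comparison} but with all sums finite, so no cutoff functions are needed. First, $u\ge0$: test the equation with $u_-^{\,r-1}\mu(x)$ on $\Omega$, or equivalently repeat the argument of \cref{thm: nonneg} with $h\to0^-$. Second, $\max_V u(\cdot,t)\le\max_\Omega u_0=:M$: test with $(u-M)_+$. The boundary terms in the discrete Green formula (\cref{lem:Green-formula}) vanish because $u=0$ on $V\setminus\Omega$. Hence $0\le u\le M$ throughout.

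For part (i) we set $a=q(p-1)<1$ and run the energy argument of Case~2 in the proof of \cref{thm:finite-time-extinction-infinite}. The Sobolev inequality there is replaced by a Poincaré (Faber--Krahn) inequality for the finite set $\Omega$. Since $\Omega\subsetneq V$ and $G$ is connected, there is $\lambda_\Omega>0$ such that
\[
\lambda_\Omega\sum_{x}|f(x)|^p\mu(x)\le\sum_{x,y}|\nabla_{xy}f|^p\omega_{xy}
\]
for every $f$ vanishing outside $\Omega$. This holds because the right-hand side vanishes only if $f$ is constant, hence zero, and we are on a finite-dimensional space.

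Fix $m=3/2$ and set $E(t)=\sum_x u^m\mu$. Differentiating and applying \cref{lem:Green-formula} gives
\[
E'=-\tfrac m2\sum(\nabla u^q)^{p-1}\nabla u^{m-1}\omega.
\]
By \cref{lem:power-difference} with $\varepsilon=m-1$, $a=u(y)^q$ and $b=u(x)^q$, this is at most $-c\sum|\nabla u^{\beta}|^p\omega$, where $\beta p=m+a-1>0$. The Poincaré inequality then gives $E'\le -c\sum u^{m+a-1}\mu$. Because $\Omega$ is finite, all $\ell^s(\Omega)$ norms are equivalent, so $\sum u^{m+a-1}\mu\ge c\,E^{\gamma}$ with $\gamma=(m+a-1)/m<1$. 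The inequality $E'\le -cE^\gamma$ with $\gamma<1$ forces $E(t)=0$ for $t\ge t_0=E(0)^{1-\gamma}/(c(1-\gamma))$.

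For part (ii), with $a\ge1$, we argue in two steps.

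\textbf{Step 1: positive values stay positive.} At $x\in\Omega$, only neighbours with $u(y)<u(x)$ contribute negatively to $\Delta_pu^q(x,t)$, and each such term is bounded by $(u^q(x))^{p-1}\omega_{xy}$. Therefore
\[
\partial_tu(x,t)\ge -u(x,t)^{a}\ge -M^{a-1}u(x,t).
\]
By Gronwall, $u(x,t)\ge u(x,t_1)e^{-M^{a-1}(t-t_1)}$, so $u(x,t_1)>0$ implies $u(x,t)>0$ for all $t\ge t_1$.

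\textbf{Step 2: spreading along paths.} Suppose $y\sim x$, $x\in\Omega$, and $u(y,t)>0$ for all $t>0$. If $u(x,t^*)=0$ for some $t^*>0$, then at $t^*$ every term of $\Delta_pu^q(x,t^*)$ is nonnegative because $u\ge0$, and the $y$-term is strictly positive. Hence $\partial_tu(x,t^*)>0$, so $u(x,t)<0$ for $t$ slightly less than $t^*$, contradicting $u\ge0$. Thus $u(x,t)>0$ for all $t>0$.

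To start the induction, pick $x_1\in\Omega$ with $u_0(x_1)>0$; Step~1 gives $u(x_1,t)>0$ for all $t>0$. Since $G[\Omega]$ is connected, every $x\in\Omega$ is joined to $x_1$ by a path inside $\Omega$, and Step~2 propagates positivity along that path.

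The step I expect to be most delicate is the preliminary nonnegativity and $\ell^\infty$ bound. Part (ii) uses $u\ge0$ essentially, and the Gronwall constant in Step~1 needs $u\le M$, which is exactly where $a\ge1$ enters. Doing this carefully on the finite Dirichlet problem, including global existence of the $C^1$ solution, is the point that requires checking. The remaining steps are routine given \cref{lem:power-difference}.
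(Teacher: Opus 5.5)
Your proof is correct. Part (i) follows the paper's mechanism, and part (ii) takes a genuinely different route.

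For (i), you and the paper run the same argument: the identity for $\frac{d}{dt}\sum u^m\mu$, then \cref{lem:power-difference}, then a functional inequality that holds by finite-dimensionality, then $E'\le -cE^\gamma$ with $\gamma<1$. The paper picks an artificial large $d$ with $q(p-1)<\frac{d-p}{d}$ and a Sobolev inequality with exponent $p^*=\frac{dp}{d-p}$. With $m=\frac{d(1-q(p-1))}{p}$ the exponents then match exactly, so the proof of \cref{thm:finite-time-extinction-infinite} can be quoted verbatim. You use a Poincar\'e inequality plus equivalence of norms with the fixed choice $m=3/2$. This keeps $\varepsilon=m-1$ inside the range $(0,1)$ where \cref{lem:power-difference} is stated, which the paper's large $m$ does not.

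For (ii), the paper argues by contradiction. Using \cref{rem:finite-graph-alternative}, which comes from \cref{lem:maximum-principle} and connectivity of $G[\Omega]$, it reduces to a first time $t_0$ with $u(\cdot,t_0)\equiv 0$. It then shows that $t^k\sum_x u(x,t)\mu(x)$ is nondecreasing just before $t_0$. The point is that mass leaks only through boundary edges, at rate $\sum_{x\in\Omega,\,y\notin\Omega}u(x)^{q(p-1)}\omega_{xy}\le \sum_x u\mu$ once $u<1$.

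You instead work pointwise and forward in time. The inequality $\partial_t u\ge -u^{q(p-1)}$ at a vertex with $u_0>0$ gives an explicit exponential lower bound there. Your Step 2, which is exactly the mechanism behind the paper's all-or-nothing remark, then propagates positivity along paths in $G[\Omega]$. Your version is constructive and quantitative, and it avoids the infimum-time bookkeeping. The paper's version needs only control of the total mass and the smallness $u<1$ near the putative extinction time, with no sup bound.

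The step you flagged as delicate is lighter than you expect. Global existence on $[0,\infty)$ is part of the hypothesis in the definition of a solution on finite graphs. You also do not need the uniform bound $u\le M$: since $V$ is finite and each $u(x,\cdot)$ is continuous, $u$ is bounded on $V\times[0,T]$ for every $T$, which is enough for Gronwall on $[0,T]$. What is really needed is nonnegativity, and your test with $u_-^{r-1}$ gives that for all $p>1$, $q>0$ with no cutoff.
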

    To establish this result, we introduce the following notation:
    $$
    \begin{aligned}
        \Omega_{T} & :=\Omega \times(0, T], \\
        V_{T} & :=V \times[0, T], \\
        \Gamma_{T} & :=V_{T}\setminus\Omega_{T}=(\Omega \times\{t=0\}) \cup ((V\setminus\Omega) \times[0, T]). 
    \end{aligned}
    $$
    
    \begin{lemma}[(Maximum Principle)]\label{lem:maximum-principle}
        Suppose that a function $u: V \times [0,T] \rightarrow \mathbb{R}$ satisfies $u(x,\cdot)\in C^1([0,T])$ for all $x\in V$. If 
        $$
        \frac{\partial u}{\partial t}-\Delta_{p} u^q \leq 0 \quad \text{in } \Omega_{T},
        $$
        then
        $$
        \max_{V_{T}} u=\max_{\Gamma_{T}} u.
        $$
        Note that this result also holds when $\Omega=V$.
    \end{lemma}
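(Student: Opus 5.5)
The plan is a classical perturbation argument for weak maximum principles, adapted to the discrete, nonlinear setting. The key monotonicity fact is elementary: $s\mapsto s^q=|s|^{q-1}s$ and $s\mapsto s^{p-1}$ are strictly increasing on $\mathbb{R}$. Hence, at a vertex $x$ where $u(\cdot,t)$ attains its maximum over $V$, every term $(\nabla_{xy}u^q(x,t))^{p-1}$ is $\le 0$, and so $\Delta_p u^q(x,t)\le 0$. Since $V$ is finite and each $u(x,\cdot)$ is continuous on $[0,T]$, all maxima below are attained.

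To obtain strict inequalities, I will set $v_\varepsilon(x,t):=u(x,t)-\varepsilon t$ for $\varepsilon>0$. The difficulty is that $\Delta_p v_\varepsilon^q$ is not $\Delta_p u^q$, because $u^q$ does not commute with subtracting a constant. I therefore avoid computing the operator on $v_\varepsilon$ and argue directly with $u$.

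Suppose $\max_{V_T}v_\varepsilon$ is attained at $(x_1,t_1)\in\Omega_T$, so $t_1\in(0,T]$. Two facts then hold.

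\begin{enumerate}[(a)]
\item For fixed $t_1$, $v_\varepsilon(\cdot,t_1)$ differs from $u(\cdot,t_1)$ by the constant $\varepsilon t_1$. So $x_1$ is a maximum point of $u(\cdot,t_1)$ over $V$, and by the monotonicity fact $\Delta_p u^q(x_1,t_1)\le 0$.
\item Since $t\mapsto v_\varepsilon(x_1,t)$ attains its maximum on $[0,T]$ at $t_1>0$, the one-sided derivative from the left gives $\partial_t v_\varepsilon(x_1,t_1)\ge 0$. Hence $\partial_t u(x_1,t_1)\ge\varepsilon$.
\end{enumerate}

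Combining (a) and (b) yields $\partial_t u-\Delta_p u^q\ge\varepsilon>0$ at $(x_1,t_1)\in\Omega_T$. This contradicts the hypothesis $\partial_t u-\Delta_p u^q\le 0$ there. Consequently $\max_{V_T}v_\varepsilon=\max_{\Gamma_T}v_\varepsilon$.

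It then follows that
$$\max_{V_T}u\le\max_{V_T}v_\varepsilon+\varepsilon T=\max_{\Gamma_T}v_\varepsilon+\varepsilon T\le\max_{\Gamma_T}u+\varepsilon T.$$
Letting $\varepsilon\to0$ gives $\max_{V_T}u\le\max_{\Gamma_T}u$. The reverse inequality is trivial since $\Gamma_T\subset V_T$.

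When $\Omega=V$, the set $\Gamma_T$ reduces to $V\times\{0\}$, and the same argument applies verbatim: step (a) only uses the maximum over the whole of $V$ at time $t_1$, and no boundary vertices are needed.

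The only subtle point is step (a). It requires that maximizing $v_\varepsilon$ over all of $V_T$ forces $x_1$ to maximize $u(\cdot,t_1)$ over all of $V$, including $V\setminus\Omega$, so that every neighbor difference is nonpositive. This holds because the perturbation $-\varepsilon t$ is spatially constant. Choosing a spatially constant perturbation is precisely what sidesteps the non-commutativity of the nonlinearity $u^q$.
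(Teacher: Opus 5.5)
Your argument is correct, and it takes a different route from the paper. The paper works with a maximum point $(x_0,t_0)$ of $u$ itself. If that point lies in $\Omega_T$, then $\partial_t u(x_0,t_0)\ge 0\ge \Delta_p u^q(x_0,t_0)$, and together with the subsolution inequality this forces $\Delta_p u^q(x_0,t_0)=0$. Hence every neighbour of $x_0$ also attains the maximum at time $t_0$, and by connectivity of $G$ the maximum value propagates at time $t_0$ to some vertex of $V\setminus\Omega$, i.e.\ into $\Gamma_T$. When $\Omega=V$ this propagation only gives $u(\cdot,t_0)\equiv\max_{V_T}u$, so the paper adds a separate step. Summing the subsolution inequality against $\mu$ and using antisymmetry shows that $t\mapsto\sum_{x}u(x,t)\mu(x)$ is nonincreasing, whence $\max_{V_T}u\le \sum_x u_0(x)\mu(x)/\sum_x\mu(x)\le \max_V u_0$.

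Your perturbation $u-\varepsilon t$ instead makes the inequality strict at a maximum point, so the contradiction is immediate. This treats $\Omega\subsetneq V$ and $\Omega=V$ in one argument. It needs neither connectivity of $G$, nor the equality analysis of $\Delta_p u^q=0$ (which uses strict monotonicity), nor the mass-monotonicity step. Your observation that a spatially constant perturbation sidesteps the nonlinearity $u^q$ is exactly what makes it work.

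In exchange, the paper's argument gives extra information of strong-maximum-principle type: an interior maximum spreads to all neighbours at the same time. This is the same mechanism that underlies \cref{rem:nonnegative-alternative} and \cref{rem:finite-graph-alternative}. For the weak principle as stated, your proof is the shorter and cleaner one.
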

    
    \begin{proof}
        Since $u(x,\cdot)$ is continuous on $[0,T]$ for every $x\in V$, there exists $(x_{0}, t_{0}) \in V_{T}$ such that $u(x_{0}, t_{0})=\max_{V_{T}} u$. Without loss of generality, we assume that $(x_0,t_0)\notin \Gamma_T$. Since $u$ attains its maximum at $(x_{0}, t_{0})$, we have
        $$
        \Delta_{p} u^q(x_{0}, t_{0})=\sum_{y \in V}
        |u^q(y, t_{0})-u^q(x_{0}, t_{0})|^{p-2}[u^q(y, t_{0})-u^q(x_{0}, t_{0})] \frac{\omega_{x_0y}}{\mu(x_{0})} \leq 0.
        $$
        It is also clear that $\frac{\partial u}{\partial t}(x_0, t_0)\geq 0$. Thus, $\frac{\partial u}{\partial t}-\Delta_{p} u^q\geq 0$ at $(x_0,t_0)$. Combining this with the condition $\frac{\partial u}{\partial t}-\Delta_{p} u^q \leq 0$ yields $\frac{\partial u}{\partial t}-\Delta_{p} u^q = 0$ at $(x_0, t_0)$. Hence,
        $$
        0\leq\frac{\partial u}{\partial t}(x_0, t_0)=\Delta_{p} u^q(x_{0}, t_{0})\leq 0.
        $$
        It follows that $\Delta_{p} u^q(x_{0}, t_{0})=0$, which implies that $u(y,t_0)=u(x_{0}, t_{0})=\max_{V_{T}} u$ for all $y\sim x_0$.
        
        When $\Omega\subsetneq V$, due to the connectivity of $G$, there must exist $x_1\in V\setminus\Omega$ such that $u(x_1,t_0)=\max_{V_{T}} u$. Consequently, $\max_{V_{T}} u=\max_{\Gamma_{T}} u$.
        When $\Omega=V$, again due to the connectivity of $G$, we have $u(x,t_0)=\max_{V_{T}} u$ for all $x\in V$. Since
        $$
        \sum_{x\in V}\frac{\partial u}{\partial t}(x,t)\mu(x)\leq\sum_{x, y \in V}\left|\nabla_{xy}u^q\right|^{p-2}\nabla_{xy}u^q\,\omega_{xy}=0,
        $$
        the function $\sum_{x\in V}u(x,t)\mu(x)$ is non-increasing with respect to $t$, and thus
        $$
        \sum_{x\in V}u_0(x)\mu(x)\geq\sum_{x\in V}u(x,t_0)\mu(x)=\left(\sum_{x\in V}\mu(x)\right)\max_{V_{T}} u.
        $$
        Consequently, we have 
        $$
        \max_{V_{T}} u\leq\max_{x\in V}u_0(x)=\max_{\Gamma_{T}} u.
        $$
        This completes the proof.
    \end{proof}
    
    \begin{remark}\label{rem:finite-graph-alternative}
        From \cref{lem:maximum-principle}, one can also derive the minimum principle and, consequently, the nonnegativity of solutions. Due to the connectivity of the subgraph $G[\Omega]$, similar to \cref{rem:nonnegative-alternative}, we observe that solutions to \eqref{eq:finite-parabolic} are either identically positive on $\Omega$ or vanish identically after some finite time $t_0$.
    \end{remark}
    
    \begin{proof}[Proof of \cref{thm:finite-graph-dichotomy}]
        We first consider the case where $q(p-1)<1$. By choosing $d>p$ sufficiently large, we can ensure that $q(p-1) <\frac{d-p}{d}$. Since $V$ is finite and $\Omega\subsetneq V$, there exists a constant $C>0$ such that for any function $f: V \to \mathbb{R}$ with support $\operatorname{supp} f\subset \Omega$, we have
        $$
            \Vert f\Vert_{\ell^{p^*}(V)}\leq C\Vert f\Vert_{\ell^p(E)},
        $$
        where $p^*=\frac{dp}{d-p}$. The conclusion then follows from the exact same argument as in the proof of \cref{thm:finite-time-extinction-infinite}.
    
        Now we turn to the case where $q(p-1)\ge 1$. We proceed by contradiction. Suppose that there exist some vertex $x\in\Omega$ and time $t>0$ such that $u(x,t)=0$. Define 
        $$
        t_0:=\inf\{t>0 : \exists x\in \Omega \text{ such that } u(x,t)=0\}.
        $$
        Since $u_0\neq0$, it is evident that $t_0>0$. According to \cref{rem:finite-graph-alternative}, we must have $u(\cdot,t)\equiv 0$ for all $t\geq t_0$.
        Let $k>0$ be a constant to be determined later, define the function $h(t)=t^k\sum_{x\in V}u(x,t)\mu(x)$. Differentiating $h(t)$ with respect to $t$ yields
        $$
		\begin{aligned}
			h'(t)&=kt^{k-1}\sum_{x\in V}u(x,t)\mu(x)+t^k\sum_{x\in V}\frac{d }{dt}u(x,t)\mu(x)\\
			&=t^k\left(\frac{k}{t}\sum_{x\in \Omega}u(x,t)\mu(x)+\sum_{x\in \Omega}\sum_{y\sim x}\left(\nabla_{xy}(u^q(\cdot,t))\right)^{p-1}{\omega_{xy}}\right)\\
			&=t^k\left(\frac{k}{t}\sum_{x\in \Omega}u(x,t)\mu(x)-\sum_{x\in \Omega}\sum_{y\in V\backslash\Omega}u^{q(p-1)}(x,t){\omega_{xy}}\right).
		\end{aligned}
		$$
        Since $u(\cdot,t)\to 0$ as $t\to t_0^-$, there exists $\varepsilon\in(0,t_0)$ such that $0 \le u(x,t)<1$ for all $x\in\Omega$ and $t\in[t_0-\varepsilon,t_0)$. Thus, for any fixed $k > t_0$ and all $t\in[t_0-\varepsilon,t_0)$,
        $$
        \sum_{x\in \Omega}\sum_{y\in V\setminus\Omega}u^{q(p-1)}(x,t)\omega_{xy} \leq \sum_{x\in\Omega}u^{q(p-1)}(x,t)\mu(x) \leq \sum_{x\in\Omega}u(x,t)\mu(x)\leq \frac{k}{t}\sum_{x\in\Omega}u(x,t)\mu(x).
        $$
        Therefore, we have $h'(t)\geq 0$ on the interval $[t_0-\varepsilon,t_0)$, which implies that $h(t)$ is non-decreasing on this interval. Since $h(t)>0$ for all $0<t<t_0$, we must have
        $$
        h(t_0) = \lim_{t\to t_0^-} h(t) \ge h(t_0-\varepsilon) > 0.
        $$
        However, by definition we have $u(\cdot,t_0)\equiv 0$ and therefore $h(t_0)=0$, which is a contradiction. Hence, $u(x,t)$ remains strictly positive on $\Omega$ for all $t>0$.
    \end{proof}

    \appendix
    \section{Faber--Krahn Inequality}\label{sec:appendix}
    \phantomsection 
    \setcounter{section}{1}
    \setcounter{theorem}{0}
    
	In this appendix we establish the existence of $\Lambda_p$ functions for some specific classes of graphs.
	
	\begin{proposition}
    	\label{prop:Faber--Krahn-polynomial}
    		Let $G=(V,E,\omega)$ be an infinite connected graph. Suppose that $A_1:=\inf_{x\sim y}\frac{\omega_{xy}}{\mu(x)}>0$ and $\mu_0:=\inf_{x\in V}\mu(x)>0$. Then for each $p\geq 1$, $G$ satisfies the Faber--Krahn inequality  with the function $\Lambda_p(v)=\gamma_0 v^{-p}$, where $\gamma_0>0$ is a constant depending only on $p, A_1$, and $\mu_0$.
    	\end{proposition}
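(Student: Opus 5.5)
The plan is to prove the inequality $\gamma_0\,\mu_V(U)^{-p}\sum_x|f(x)|^p\mu(x)\le \sum_{x,y}|f(y)-f(x)|^p\omega_{xy}$ directly, using a path argument from the maximum of $|f|$ to a vertex outside $U$. Fix a finite $U\subset V$ and $f$ vanishing outside $U$. Let $x^*\in U$ maximize $|f|$ and set $M=|f(x^*)|$. Then $\sum_x |f|^p\mu\le M^p\mu_V(U)$, so it suffices to show
$$M^p\le C\,\mu_V(U)^{p-1}\sum_{x,y}|\nabla_{xy}f|^p\omega_{xy}.$$

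First, since $V$ is infinite, connected and locally finite while $U$ is finite, there is a path $x^*=x_0\sim x_1\sim\dots\sim x_k$ with $x_0,\dots,x_{k-1}\in U$ and $x_k\notin U$. Take a shortest path from $x^*$ to $V\setminus U$; its vertices are distinct, so $k\le \#U\le \mu_V(U)/\mu_0$. Since $f(x_k)=0$, we have $M\le \sum_{i=0}^{k-1}|f(x_{i+1})-f(x_i)|$. Hölder's inequality then gives
$$M^p\le k^{p-1}\sum_{i}|f(x_{i+1})-f(x_i)|^p.$$

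Second, convert edge terms into weighted ones. For each edge $x_i\sim x_{i+1}$ with $x_i\in U$, we have $\omega_{x_ix_{i+1}}\ge A_1\mu(x_i)\ge A_1\mu_0$. Since the path's edges are distinct, this gives
$$\sum_i|\nabla f|^p\le (A_1\mu_0)^{-1}\sum_{x,y}|\nabla_{xy}f|^p\omega_{xy}$$
(the double sum counts each edge twice, which only helps). Combining the two steps,
$$M^p\le (\mu_V(U)/\mu_0)^{p-1}(A_1\mu_0)^{-1}\sum_{x,y}|\nabla_{xy}f|^p\omega_{xy},$$
and multiplying by $\mu_V(U)$ yields the claim with $\gamma_0=A_1\mu_0^{p}$. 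For $p=1$ the Hölder step is trivial.

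There is no real obstacle. The only points needing care are the existence of an exit path from $U$, which uses that $G$ is infinite and connected, and the bound $\#U\le\mu_V(U)/\mu_0$. It should also be noted that the monotonicity assumptions \eqref{assumponLamb} hold for $\Lambda_p(v)=\gamma_0v^{-p}$ with $\nu=p$ and any $N\ge1$.
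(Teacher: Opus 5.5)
Your proof is correct, and it takes a different route from the paper's.

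The paper argues in two stages. First it proves the case $p=1$: for every $x\in U$ it bounds $|f(x)|$ by the total edge variation $\sum_{\{u,v\}\in E}|f(v)-f(u)|$, using a path to a fixed vertex outside $U$, and then sums over $x\in U$ with weights $\mu(x)$. It then handles $p>1$ by applying the $p=1$ inequality to $|f|^p$, together with Lemma~\ref{lem:power-lipschitz} and H\"older's inequality. This gives $\gamma_0=(A_1\mu_0/(4p))^p$.

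You instead work only at a maximizer of $|f|$. You take a shortest exit path, whose length is at most $\#U\le\mu_V(U)/\mu_0$, and apply H\"older along that single path. This has three advantages:
\begin{itemize}
\item it treats all $p\ge 1$ at once;
\item it actually proves the stronger bound $\|f\|_{\ell^\infty(V)}^p\le C\,\mu_V(U)^{p-1}\sum_{x,y}|\nabla_{xy}f|^p\omega_{xy}$;
\item it yields the better constant $\gamma_0=A_1\mu_0^p$ (even $2A_1\mu_0^p$, since the path edges appear twice in the double sum). Because $A_1\le 1$, this beats the paper's constant and does not decay like $p^{-p}$.
\end{itemize}

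What the paper's route buys is modularity. Its second stage is a general transfer from an $\ell^1$ Faber--Krahn function to $\Lambda_p\approx c\,\Lambda_1^p$. The paper reuses exactly this in Proposition~\ref{thm:Cayley-Faber--Krahn}, where the $p=1$ input comes from the isoperimetric inequality of Lemma~\ref{lem:Cayley-isoperimetric} and produces the rate $v^{-p/N}$. Your max-plus-path argument is tied to the crude rate $v^{-p}$ and would not give that refinement.

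One minor point: local finiteness is not needed for the existence of the exit path. Connectedness and $V\setminus U\neq\varnothing$ suffice.

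Your closing check of the monotonicity assumptions, with $\nu=p$ and any $N\ge 1$, is also correct.
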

    	\begin{proof}
    	(1). We first consider the case $p=1$. Let $U$ be an arbitrary finite subset of $V$ and $f$ be an arbitrary function with support $\operatorname{supp}(f)\subset U$. Fix an arbitrary vertex $x_0\in V\setminus U$. For each $x\in U$, by connectivity, there exists a simple path 
        $$
        x=y_0\sim y_1\sim\cdots\sim y_k\sim y_{k+1}=x_0
        $$
        in $G$ such that all vertices in the path are distinct. Since $f(x_0)=0$, we have 
    	$$
    	|f(x)|\leq|f(x)-f(y_1)|+\sum_{i=1}^{k-1}{|f(y_i)-f(y_{i+1})|}+|f(y_k)-f(x_0)|\leq \sum_{\{u, v\}\in E}|f(v)-f(u)|,
    	$$
    	which implies that
    	\begin{equation}\label{eq:F-K-p-1}
    	    \sum_{x\in U}|f(x)|\mu(x)\leq\mu_V(U)\sum_{\{u, v\}\in E}|f(v)-f(u)|\leq \frac{\mu_V(U)}{A_1\mu_0} \sum_{u, v\in V}|f(v)-f(u)|\omega_{uv}.
    	\end{equation}
    	Thus, we can choose $\Lambda_1(v)=A_1\mu_0v^{-1}$.
    	
    	(2). Now consider the case $p>1$. Applying the inequality \eqref{eq:F-K-p-1} with $f$ replaced by $|f|^p$, we obtain
    	$$
    	\begin{aligned}
    		\sum_{x\in U}|f(x)|^p\mu(x) &\leq\frac{\mu_V(U)}{A_1\mu_0} \sum_{x, y\in V}\left||f(y)|^p-|f(x)|^p\right|\omega_{xy}\\
    		&\leq\frac{p\mu_V(U)}{A_1\mu_0} \sum_{x, y\in V}|f(y)-f(x)|\left(|f(y)|^{p-1}+|f(x)|^{p-1}\right)\omega_{xy}\\
    		&\leq\frac{p\mu_V(U)}{A_1\mu_0} \left(\sum_{x, y\in V}|f(y)-f(x)|^p\omega_{xy}\right)^{\frac{1}{p}} \times \\
    		&\quad\quad\quad\quad \left(\sum_{x, y\in V}\left(|f(y)|^{p-1}+|f(x)|^{p-1}\right)^{\frac{p}{p-1}}\omega_{xy}\right)^{\frac{p-1}{p}}\\
    		&\leq \frac{4p\mu_V(U)}{A_1\mu_0} \left(\sum_{x, y\in V}|f(y)-f(x)|^p\omega_{xy}\right)^{\frac{1}{p}}\left(\sum_{x\in U}|f(x)|^p\mu(x)\right)^{\frac{p-1}{p}},
    	\end{aligned}
    	$$
    	where the second inequality uses \cref{lem:power-lipschitz} and the third inequality applies H\"older's inequality. Dividing both sides by $\left(\sum_{x\in U}|f(x)|^p\mu(x)\right)^{\frac{p-1}{p}}$ yields
    	$$
    	\left(\sum_{x\in U}|f(x)|^p\mu(x)\right)^{\frac{1}{p}}\leq\frac{4p\mu_V(U)}{A_1\mu_0} \left(\sum_{x, y\in V}|f(y)-f(x)|^p\omega_{xy}\right)^{\frac{1}{p}}.
    	$$
    	Hence, we can set $\Lambda_p(v)=\gamma_0v^{-p}$ with $\gamma_0=\left(\frac{A_1\mu_0}{4p}\right)^p$.
    \end{proof}

    Let $G=(\Gamma, S)$ be the Cayley graph of a discrete group ${\Gamma}$ with a finite symmetric generating set $S$, where the weights $\omega$ satisfy $\omega_{xy}=1$ whenever $x\sim y$. Recall that 
    $$ 
    B_R = \{x \in V : d (x, e) \leq R\}
    $$
    is the ball centered at the identity element $e$ with radius $R>0$. 
    The following result is proved by Coulhon and Saloff-Coste in \cite{coulhon-isoperimetric}.
    \begin{lemma}\label{lem:Cayley-isoperimetric}
        Let $\mathcal{V}(R)$ be a continuous, non-negative, strictly increasing function on $[0, +\infty)$ such that $\mathcal{V}(R) \to \infty$ as $R \to \infty$. Assume that for all non-negative integers $R$,
        $$
        \mu_V(B_R) \ge \mathcal{V}(R).
        $$
        Then the Cayley graph $(\Gamma, S)$ satisfies the following isoperimetric inequality
        $$
        \mu_{E} (\partial\Omega) \geq \Phi (\mu_V (\Omega)),
        $$
        with function
        $$
        \Phi(s) = \frac{c_0 s}{\mathcal{V}^{-1}(2s)}, \quad s \ge \vert S\vert,
        $$
        where $c_0 = \frac{ \mathcal{V}^{-1}(2|S|)}{4|S| \left(\mathcal{V}^{-1}(2|S|) + 1 \right)}$.
    \end{lemma}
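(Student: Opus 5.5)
The plan is to follow the standard Coulhon--Saloff-Coste argument, which compares $f$ with its translates $f(\cdot\, g)$ for group elements $g$ of word length at most $R$.

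\textbf{Step 1 (translation estimate).} Fix a finite $\Omega\subset\Gamma$ and set $f=\mathbf 1_\Omega$. For $g=s_1\cdots s_k$ with $s_i\in S$ and $k=|g|\le R$, telescope along the word:
\[
\sum_x |f(xg)-f(x)| \le \sum_{i=1}^k \sum_x |f(x s_1\cdots s_i)-f(x s_1\cdots s_{i-1})| .
\]
Each inner sum equals $\sum_x|f(xs_i)-f(x)|$, by right invariance of counting measure. This in turn is at most $\sum_{s\in S}\sum_x|f(xs)-f(x)|$, which equals $2\,\#\partial\Omega$ (each boundary edge is counted twice over $s,s^{-1}$). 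Since $\mu\equiv|S|$, $\mu_V$ is $|S|$ times counting measure. Hence, in counting measure,
\[
\sum_x|f(xg)-f(x)|\le 2R\,\mu_E(\partial\Omega).
\]

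\textbf{Step 2 (averaging over a ball).} Average over $g\in B_R$. For $x\in\Omega$ and $g\in B_R$ with $xg\notin\Omega$ we get a contribution, and
\[
\frac{1}{\#B_R}\sum_{g\in B_R}\sum_x|f(xg)-f(x)| \ge 2\Bigl(\#\Omega-\frac{(\#\Omega)^2}{\#B_R}\Bigr).
\]
This follows because $\sum_{g}\#(\Omega\cap\Omega g^{-1})\le(\#\Omega)^2$. Choose $R$ so that $\#B_R\ge 2\#\Omega$, i.e.\ $\mu_V(B_R)\ge 2\mu_V(\Omega)$. Then the right side is at least $\#\Omega=\mu_V(\Omega)/|S|$.

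Combining with Step 1 gives
\[
\mu_E(\partial\Omega)\ge \frac{\mu_V(\Omega)}{2|S|R}.
\]

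\textbf{Step 3 (choice of $R$).} Take $R$ to be the smallest integer with $\mathcal V(R)\ge 2s$, where $s=\mu_V(\Omega)$. Then $R\le \mathcal V^{-1}(2s)+1$, and $\mu_V(B_R)\ge\mathcal V(R)\ge 2s$ as required.

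\textbf{Main obstacle.} The delicate point is converting $\mathcal V^{-1}(2s)+1$ into a constant multiple of $\mathcal V^{-1}(2s)$ uniformly for $s\ge|S|$, which is what produces the stated constant $c_0$. Since $\mathcal V^{-1}$ is increasing, $\mathcal V^{-1}(2s)\ge\mathcal V^{-1}(2|S|)=:a$, so
\[
\mathcal V^{-1}(2s)+1\le \frac{a+1}{a}\,\mathcal V^{-1}(2s).
\]
This yields
\[
\mu_E(\partial\Omega)\ge \frac{a}{2|S|(a+1)}\cdot\frac{s}{\mathcal V^{-1}(2s)}.
\]
The extra factor $2$ in $c_0$ absorbs the slack from edge double counting and the ceiling in the choice of $R$. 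The restriction $s\ge|S|$ is automatic, since nonempty $\Omega$ has $\mu_V(\Omega)\ge|S|$. Finally, I will carefully check the conventions, namely whether $\sum_x$ runs over ordered pairs and how $\mu_E$ counts edges, so that the constant comes out exactly as stated.
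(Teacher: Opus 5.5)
Your proposal is correct and is essentially the Coulhon--Saloff-Coste translation-and-averaging argument; the paper gives no proof of its own and simply cites that source. Your chain in fact gives $\mu_E(\partial\Omega)\ge 2c_0\,\mu_V(\Omega)/\mathcal V^{-1}(2\mu_V(\Omega))$, so the stated bound holds with a factor $2$ to spare and nothing needs to be absorbed; the implicit checks that $2s\ge 2|S|>\mu_V(B_0)\ge\mathcal V(0)$ (so that $\mathcal V^{-1}(2s)$ is defined and $a>0$) are immediate.
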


	\begin{proposition}\label{thm:Cayley-Faber--Krahn}
		Let $G=(\Gamma,S)$ be a Cayley graph. Suppose that there exist constants $c,N>0$ such that 
        $$
        \mu_V(B_r) \ge c r^N,\quad\forall r\geq 0.
        $$
        Then for each $p\geq1$, $G$ satisfies the Faber--Krahn inequality with the function 
        $$
        \Lambda_p(v)=\gamma_0v^{-\frac{p}{N}},
        $$ 
        where $\gamma_0$ is a sufficiently small constant.	
    \end{proposition}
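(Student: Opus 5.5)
The plan is to derive the Faber--Krahn inequality from the isoperimetric profile given by \cref{lem:Cayley-isoperimetric}, in three steps: an $\ell^1$ Sobolev-type inequality via the co-area formula, an extension to $p>1$ by applying it to $|f|^p$, and a separate treatment of small sets. Take $\mathcal{V}(R)=cR^N$, which is continuous, strictly increasing and unbounded. \cref{lem:Cayley-isoperimetric} then yields
$$
\mu_E(\partial\Omega)\ge \Phi(\mu_V(\Omega)) = c_1\,\mu_V(\Omega)^{1-1/N}
\qquad\text{whenever } \mu_V(\Omega)\ge |S|.
$$
On a Cayley graph $\mu(x)=|S|$ for every $x$, so any nonempty $\Omega$ satisfies $\mu_V(\Omega)\ge |S|$, and the bound holds for all finite nonempty $\Omega$. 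In particular, for $\Omega\subset U$ we have $\mu_E(\partial\Omega)\ge c_1\mu_V(\Omega)\mu_V(\Omega)^{-1/N}\ge c_1\mu_V(\Omega)\mu_V(U)^{-1/N}$.

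\textbf{Case $p=1$.} Let $f$ be supported in a finite $U$; we may take $f\ge0$, since replacing $f$ by $|f|$ does not increase the right-hand side of \eqref{uFK}. Use the discrete co-area formula
$$
\sum_{\{x,y\}\in E}|f(y)-f(x)| = \int_0^\infty \mu_E(\partial\{f>s\})\,ds .
$$
Each level set $\{f>s\}$ is contained in $U$. Inserting the isoperimetric bound and then the layer-cake formula $\int_0^\infty \mu_V(\{f>s\})\,ds=\sum_x f(x)\mu(x)$ gives
$$
\sum_{x,y}|f(y)-f(x)|\omega_{xy} \ge 2c_1\,\mu_V(U)^{-1/N}\sum_x |f(x)|\mu(x).
$$
This is \eqref{uFK} for $p=1$ with $\Lambda_1(v)=\gamma v^{-1/N}$.

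\textbf{Case $p>1$.} Apply the $p=1$ inequality to $|f|^p$, which is also supported in $U$. Then follow the appendix argument in \cref{prop:Faber--Krahn-polynomial}: use \cref{lem:power-lipschitz} to bound $\bigl||f(y)|^p-|f(x)|^p\bigr|$ by $p|f(y)-f(x)|\bigl(|f(y)|^{p-1}+|f(x)|^{p-1}\bigr)$. Next, Hölder's inequality with exponents $p$ and $p/(p-1)$ separates the gradient term from the $\ell^p$ norm. Finally, use $\sum_y\omega_{xy}=\mu(x)$ to bound the second Hölder factor by a multiple of $\bigl(\sum|f|^p\mu\bigr)^{(p-1)/p}$. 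The result is
$$
\mu_V(U)^{-1/N}\Bigl(\sum|f|^p\mu\Bigr)^{1/p}\le C\Bigl(\sum_{x,y}|f(y)-f(x)|^p\omega_{xy}\Bigr)^{1/p}.
$$
Raising to the $p$-th power gives $\Lambda_p(v)=\gamma_0 v^{-p/N}$ with $\gamma_0$ small.

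The main obstacle is making sure the isoperimetric bound covers every nonempty set. It only applies for $\mu_V(\Omega)\ge|S|$, which holds here exactly because $\mu\equiv|S|$ on a Cayley graph. The other point to check is the constant $c_0$: \cref{lem:Cayley-isoperimetric} defines $c_0$ through $\mathcal{V}^{-1}(2|S|)$, so one should confirm this depends only on $c$, $N$ and $|S|$. This is harmless, since $\gamma_0$ is only required to be sufficiently small. If the lower volume bound is only available for large $r$, replace $\mathcal{V}$ by a smaller strictly increasing function; this changes only the constants.
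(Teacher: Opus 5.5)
Your proof is correct and follows the paper's route. You reduce to $p=1$ via the argument of \cref{prop:Faber--Krahn-polynomial} (the $\ell^1$ bound applied to $|f|^p$, then \cref{lem:power-lipschitz} and H\"older), and you obtain the $p=1$ case from the isoperimetric profile of \cref{lem:Cayley-isoperimetric} with $\mathcal{V}(R)=cR^N$. The only differences are minor: where the paper invokes the identity $\lambda_{1,1}(U)=h_{\mu,\nu}(U)$, you prove the needed direction inline via the co-area and layer-cake formulas, and your observation that $\mu\equiv|S|$ makes the restriction $s\ge|S|$ in $\Phi$ vacuous, a detail the paper leaves implicit.
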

	\begin{proof}
        In view of the proof of \cref{prop:Faber--Krahn-polynomial}, we only need to prove the case $p=1$.
        For any finite subset $U$ of the vertex set $V$, let
        \[
            \lambda_{1,1}(U) = \inf_{f \in \mathbb{R}^U \setminus \{0\}} \frac{\Vert f \Vert_{\ell^1(E)}}{\Vert f \Vert_{\ell^1(V)}}, \quad \text{and} \quad h_{\mu,\nu}(U) = \min_{\varnothing \neq \Omega \subset U} \frac{\mu_E(\partial\Omega)}{\mu_V(\Omega)},
        \]
        where $\partial\Omega = \{\{x, y\} \in E \mid x \in \Omega, \, y \in V \setminus \Omega\}$. By \cite[Proposition 1.1]{Hua2}, we have $\lambda_{1,1}(U) = h_{\mu,\nu}(U)$ for every finite subset $U \subset V$. Therefore, it suffices to prove that 
        \[
            \mu_E(\partial\Omega) \ge \gamma_0 \mu_V(\Omega)^{1-\frac{1}{N}}
        \]
        for any finite subset $\Omega \subset V$. Since $\mu_V(B_r) \ge c r^N$, this is a direct consequence of \cref{lem:Cayley-isoperimetric}.
    \end{proof}

    \textbf{Acknowledgements.} 
    The authors would like to express their gratitude to Professor Yuhua Sun for proposing the initial research problem and providing invaluable guidance throughout the preparation of this paper. The authors also thank Professor Alexander Grigor'yan for his valuable comments and suggestions on the manuscript.
    \bibliographystyle{alpha}
    \bibliography{main}

\end{document}